\def\qed{\hfill
\ifhmode\unskip\nobreak\fi\quad\ifmmode\Box\else$\Box$\fi\\ }
\newtheorem{theorem}{Theorem}
\newtheorem{lemma}[theorem]{Lemma}
\newtheorem{claim}[theorem]{Claim}
\newtheorem{fact}[theorem]{Fact}
\newcommand{\plf}{$\mathrm{Pl}_4$}
\newcommand{\plfnof}{$\mathrm{Pl}_{4,4f}$}
\newcommand{\twg}{\mathcal{TW}}
\newcounter{claims}
\newenvironment{claims}{\refstepcounter{claims}\par\medskip\noindent%
{{\bf (\theclaims)}~~}}{\par\medskip}
\newcommand{\claimz}[2]{\begin{claims}{\em #2}\label{#1}\end{claims}}
\newcommand{\refclaim}[1]{(\ref{#1})}
\title{Planar 4-critical  graphs with four triangles}
\author{
Oleg V. Borodin \thanks{Sobolev Institute of Mathematics and Novosibirsk State University, Novosibirsk 630090, Russia.
E-mail:  {\tt  brdnoleg@math.nsc.ru}. Research of this author is supported in part  by grants 12-01-0044.
and 12-01-00631 of the Russian Foundation for Basic Research.}
\and
Zden\v{e}k Dvo\v{r}\'ak \thanks{Computer Science Institute of Charles University, Prague, Czech Republic. E-mail: {\tt rakdver@iuuk.mff.cuni.cz}. 
Supported the Center of Excellence -- Inst. for Theor. Comp. Sci., Prague (project P202/12/G061 of Czech Science Foundation), and
by project LH12095 (New combinatorial algorithms - decompositions, parameterization, efficient solutions) of Czech Ministry of Education.}
\and
Alexandr V. Kostochka \thanks{University of Illinois at Urbana-Champaign, Urbana, IL 61801, USA 
and  Sobolev Institute of Mathematics, Novosibirsk 630090, Russia. E-mail: {\tt kostochk@math.uiuc.edu}. 
Research of this author is supported in part by NSF grant DMS-0965587.}
\and
Bernard Lidick\'y \thanks{University of Illinois at Urbana-Champaign, Urbana, IL 61801, USA. E-mail: {\tt lidicky@illinois.edu}}
\and
Matthew Yancey \thanks{University of Illinois at Urbana-Champaign, Urbana, IL 61801, USA. E-mail:  {\tt yancey1@illinois.edu}. Research of this author is supported by National Science Foundation grant DMS 08-38434 ``EMSW21-MCTP: Research Experience for Graduate Students.''}
}
\date{\today}
\begin{document}
\maketitle
\begin{abstract} By the Gr\" unbaum-Aksenov Theorem (extending  Gr\" otzsch's Theorem) every  planar graph with at most three triangles is $3$-colorable. 
However, there are infinitely many planar $4$-critical graphs with exactly four triangles.
We describe all such graphs. This answers a question of Erd\H os from 1990.
\end{abstract}

\section{Introduction}
The classical Gr\" otzsch's Theorem~\cite{grotzsch1959} says that every planar triangle-free graph is $3$-colorable.
The following refinement of it is known as the Gr\" unbaum-Aksenov Theorem (the original proof of Gr\" unbaum~\cite{grunbaum1963}
was incorrect, and Aksenov~\cite{aksenov} fixed the proof).

\begin{theorem}[\cite{aksenov,borodin1997,grunbaum1963}]\label{thm-aksenov}
 Let $G$ be a planar graph containing at most three triangles. Then $G$ is 3-colorable.
\end{theorem}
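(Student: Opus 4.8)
The plan is to argue by contradiction: suppose the theorem fails and let $G$ be a counterexample minimizing $|V(G)|+|E(G)|$. Then $G$ is $4$-critical, so $\delta(G)\ge 3$, $G$ is $2$-connected, and $G\ne K_4$. By Gr\"otzsch's Theorem $G$ has a triangle, so in a fixed plane embedding the number $t$ of triangular faces satisfies $1\le t\le 3$, and every non-triangular face has length at least $4$.

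The first step is to use $4$-criticality and minimality of $G$ to forbid small local configurations. By Gallai's theorem the subgraph of $G$ induced by its degree-$3$ vertices is a Gallai tree (each block a complete graph or an odd cycle), and combined with $G\ne K_4$ and $t\le 3$ this forces almost all of its blocks to be single edges; in particular degree-$3$ vertices cannot cluster. One then proves the standard reducibility lemmas for $4$-critical planar graphs: a separating pair $\{u,v\}$ is handled by $3$-colouring the two sides (after possibly adding the edge $uv$) and pasting, so $G$ may be assumed to have no small cut; short separating cycles can be cut open; and a triangle cannot interact too closely with another triangle or with a short face. Each of these is a finite case analysis that exploits the scarcity of triangles, and I would treat it as bookkeeping.

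The heart is a discharging argument. Assign to each vertex $v$ the charge $d(v)-4$ and to each face $f$ the charge $\ell(f)-4$; by Euler's formula these sum to exactly $-8$. The only objects with negative initial charge are the degree-$3$ vertices and the at most $t\le 3$ triangles, each with charge $-1$, while vertices and faces of size exactly $4$ start at $0$ and everything larger has a surplus. One sets up rules moving the surplus of large vertices and large faces to the degree-$3$ vertices and to the triangles — with the charge-$0$ $4$-faces acting as conduits — designed so that, using the forbidden configurations, every vertex and every face other than the $\le 3$ triangles ends with nonnegative charge, while each triangle loses only a bounded amount. Since three triangles cannot absorb a total deficit of $8$, this contradicts the charge sum. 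The principal obstacle is exactly the fine-tuning here: one must rule out the scenario in which the few triangles, possibly together with long chains of charge-$0$ $4$-faces threaded by degree-$3$ vertices, jointly carry the entire $-8$. This is precisely the point at which Gr\"unbaum's original proof was flawed, so I would follow the corrected configuration analyses of Aksenov and of Borodin.

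A cleaner but arithmetically delicate alternative bypasses most of the discharging via the potential method for sparse critical graphs: every $4$-critical graph satisfies $3|E(G)|\ge 5|V(G)|-2$, and Euler's formula gives $2|E(G)|\le 4|V(G)|-8+t$ for a plane graph with $t$ triangular faces. These two inequalities alone are not decisive, but sharpening the critical-graph bound so that each triangle contributes a definite additional amount of edge-density — which is where the real work lies — would force $t\ge 4$, a contradiction. I would try this route first and fall back on the discharging analysis if the bookkeeping around triangles becomes intractable.
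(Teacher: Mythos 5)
This theorem is not proved in the paper at all: it is the Gr\"unbaum--Aksenov theorem, quoted from the literature (Aksenov; Borodin; Gr\"unbaum) and used as background, so there is no in-paper proof to compare against. Judged on its own terms, your text is a plan rather than a proof, and the parts you defer are exactly the parts that constitute the theorem. You never state the discharging rules, never list the reducible configurations, and never verify that the rules leave every element nonnegative except for a deficit that three triangles cannot carry; you explicitly label this ``the principal obstacle'' and ``where the real work lies,'' and you note yourself that this is precisely where Gr\"unbaum's original argument failed. A proof outline that postpones the step known to be the historically fatal one cannot be accepted as a proof.

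Two concrete substeps are also not right as stated. First, the Gallai-tree deduction: for $4$-critical graphs the blocks of the subgraph induced by degree-$3$ vertices are complete graphs or odd cycles, and odd cycles of length at least $5$ contain no triangles, so the hypothesis $t\le 3$ does not force ``almost all blocks to be single edges.'' Second, the potential-method alternative: combining $3|E(G)|\ge 5|V(G)|-2$ with $2|E(G)|\le 4|V(G)|-8+t$ yields only $3t\ge 20-2|V(G)|$, which is vacuous for $|V(G)|\ge 10$; you acknowledge this, but the promised sharpening (a density gain per missing triangle) is itself a substantial theorem of the Kostochka--Yancey/Borodin et al.\ type and is nowhere derived. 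Either route could in principle be completed --- both appear in the literature --- but as written the argument has no content at the point where content is required.
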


The example of the complete $4$-vertex graph $K_4$ shows that ``three'' in Theorem~\ref{thm-aksenov} cannot be replaced
by ``four''. But maybe there are not many plane $4$-critical graphs with exactly four triangles ({\em \plf-graphs}, for short)? 

It turned out that there are many. Havel~\cite{havel1969} presented a  \plf-graph $H_1$  (see Figure~\ref{figH1H2}) in which the four triangles had no 
common vertices. He used the {\em quasi-edge} $H_0=H_0(u,v)$ (on the left of Figure~\ref{figH1H2}), that is, a graph in each $3$-coloring of which
the vertices $u$ and $v$ must have distinct colors. The graph $H_1$ is obtained from $K_4$ by replacing the edges $v_1u_1$ and $v_2u_2$ with copies of
the quasi-edge $H_0$.
Then Sachs~\cite{HS} in 1972 asked whether it is true that in every non-3-colorable planar graph $G$ with exactly four triangles
and no separating triangles
these triangles can be partitioned into two pairs so that in each pair the distance between the triangles is less than two.

\begin{figure}
\begin{center}
\includegraphics{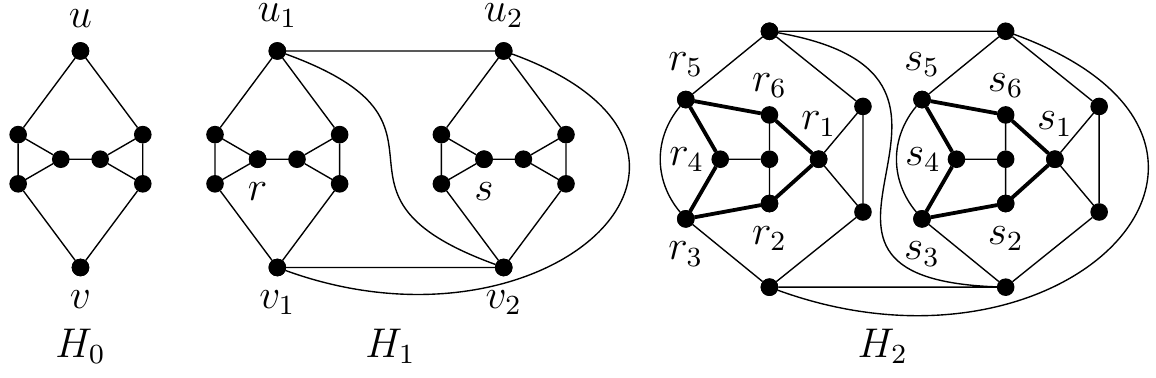}
\end{center}
\caption{A quasi-edge $H_0$ and \plf-graphs $H_1$ and $H_2$}\label{figH1H2}
\end{figure}

Aksenov and Mel'nikov~\cite{aksmel1,aksmel2} answered the question in the negative by constructing
a  \plf-graph $H_2$  (see Figure~\ref{figH1H2}) in which the 
distance between any two of the four triangles is at least two.
Moreover, they constructed two infinite series of \plf-graphs.
Aksenov~\cite{aks76} was studying \plf-graphs in the seventies.
According to Steinberg~\cite{steinberg93},
Erd\H os in 1990 asked for a description of \plf-graphs again.
Borodin~\cite{borodin1997} remarks that he knows 15 infinite families of \plf-graphs.
In his survey~\cite{borodinsurvey}, he mentions the problem of describing \plf-graphs among unsolved
problems on 3-coloring of plane graphs.
 
\begin{figure}
\begin{center}
\includegraphics{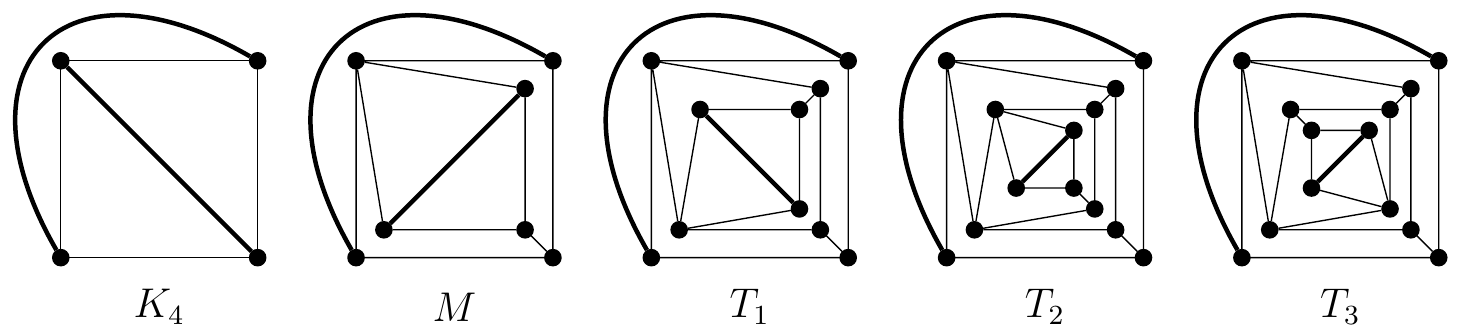}
\end{center}
\caption{Some members of $\twg$.}\label{fig-thomaswalls}
\end{figure}

First, we give a description of the \plf-graphs without $4$-faces, which we call \emph{\plfnof-graphs}.  
Thomas and Walls~\cite{thomwalls} constructed an infinite family $\twg$ of \plfnof-graphs;
the first five graphs in $\twg$ are depicted in Figure~\ref{fig-thomaswalls}
(note that $T_2$ and $T_3$ are isomorphic graphs, but their drawings are different).
If an edge $e$ of a graph belongs to exactly two triangles, we say that $e$ is a \emph{diamond} edge.
Each graph in $\twg$ contains two disjoint diamond edges, drawn in bold in Figure~\ref{fig-thomaswalls}.
We define the class $\twg$ in terms of Ore-compositions.

An \emph{Ore-composition} $O(G_1,G_2)$ of graphs $G_1$ and $G_2$ is a graph obtained as follows:
delete some edge $xy$ from $G_1$,
split some vertex $z$ of $G_2$ into two vertices $z_1$ and $z_2$ of positive degree, and
identify $x$ with $z_1$ and $y$ with $z_2$.  If $xy$ is a diamond edge of $G_1$ and $G_2$ is $K_4$,
then we say that $O(G_1,G_2)$ is a \emph{diamond expansion} of $G_1$.
The class $\twg$ consists of all graphs that can be obtained from $K_4$ by diamond expansions.

Note that $H_1$ is a \plfnof-graph but is not in $\twg$.
A graph is \emph{$k$-Ore} if it is obtained from a set of copies of $K_k$ by a sequence of Ore-compositions.
It was proved in~\cite{KY12-brooks} that every $k$-Ore graph is $k$-critical. A partial case of Theorem~6 
in~\cite{KY12-brooks} is the following.

\begin{theorem}[\cite{KY12-brooks}]\label{thm-edges}
Let $G$ be an $n$-vertex $4$-critical graph. Then $|E(G)| \geq \frac{5n-2}{3}$.
Moreover, $|E(G)| = \frac{5n-2}{3}$ if and only if $G$ is a $4$-Ore graph.
\end{theorem}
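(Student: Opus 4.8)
The plan is to deduce Theorem~\ref{thm-edges} from a reformulation in terms of a potential function, which is proved by a minimal‑counterexample argument; it is cleaner to treat all $k\ge 4$ at once. For a graph $H$ put
\[
\rho_k(H)=(k+1)(k-2)\,|V(H)|-2(k-1)\,|E(H)| ,
\]
so that for $k=4$ the inequality $|E(G)|\ge\frac{5|V(G)|-2}{3}$ is precisely $\rho_4(G)\le 4=k(k-3)$, with the equality cases matching up. A short induction on the number of Ore‑compositions shows that every $k$-Ore graph $H$ has $\rho_k(H)=k(k-3)$: one has $\rho_k(K_k)=k(k-3)$, and from $|V(O(G_1,G_2))|=|V(G_1)|+|V(G_2)|-1$ and $|E(O(G_1,G_2))|=|E(G_1)|+|E(G_2)|-1$ one gets $\rho_k(O(G_1,G_2))=\rho_k(G_1)+\rho_k(G_2)-k(k-3)$. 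Since $k$-Ore graphs are $k$-critical (quoted above), it therefore suffices to prove: \emph{no $k$-critical graph $G$ has $\rho_k(G)\ge k(k-3)$ while failing to be $k$-Ore.} This single statement gives both halves of the theorem, since a $k$-critical graph with $\rho_k>k(k-3)$ cannot be $k$-Ore, and a $k$-critical graph with $\rho_k=k(k-3)$ cannot fail to be $k$-Ore.

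Suppose the statement fails, and let $G$ be a $k$-critical non-$k$-Ore graph with $\rho_k(G)\ge k(k-3)$, chosen with $|V(G)|$ minimum and then $|E(G)|$ minimum. First I would record the standard facts: $G$ is $2$-connected and $\delta(G)\ge k-1$; also $G$ has a vertex of degree exactly $k-1$, since $\delta(G)\ge k$ would give $2|E(G)|\ge k|V(G)|$ and hence $\rho_k(G)\le -2|V(G)|<k(k-3)$. Moreover $G\ne K_k$ (which is $k$-Ore), and since a $k$-critical graph cannot properly contain a $k$-critical subgraph, $K_k\not\subseteq G$; consequently, if $v$ has degree $k-1$ then $N(v)$ is not a clique.

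The core of the proof is a \emph{potential lemma}: in this minimal counterexample, every vertex set $W$ with $2\le|W|\le|V(G)|-1$ satisfies
\[
\rho_{k,G}(W):=(k+1)(k-2)\,|W|-2(k-1)\,|E(G[W])|\ \ge\ k(k-3)+2 ,
\]
with a much larger lower bound when $|W|$ is small (then $G[W]$ is forced to be sparse). I would prove it by a reducibility argument: if some proper $W$ violated the inequality, $G[W]$ would be dense enough that, starting from a $(k-1)$-colouring of the proper subgraph $G-W$ (which exists, as $\chi(G-W)\le k-1$), one could either extend this colouring to all of $G$, contradicting $\chi(G)=k$, or contract/modify $G[W]$ into a bounded gadget to obtain a strictly smaller $k$-critical graph whose potential — at most $k(k-3)$ by minimality — is incompatible with the density of $G[W]$. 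Making this rigorous requires a case analysis on $|W|$ and on the small cliques inside $G[W]$, and I expect this lemma to be the main obstacle of the whole argument.

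With the potential lemma in hand, I would take a vertex $v$ of degree $k-1$ together with two non-adjacent neighbours $u_1,u_2$ of $v$ (available by the second paragraph), and form $G^{\ast}$ from $G-v$ by identifying $u_1$ and $u_2$. Because every $(k-1)$-colouring of $G-v$ uses all $k-1$ colours on $N(v)$ (otherwise it extends to $v$), $G^{\ast}$ is not $(k-1)$-colourable, so it contains a $k$-critical subgraph $H$ with $|V(H)|\le|V(G)|-2$, and $\rho_k(H)\le k(k-3)$ by minimality. Pulling the preimage of $V(H)$ back to a vertex set $W\subseteq V(G)$ and accounting for the $k-1$ edges at $v$ and the edges merged by the identification gives, after the appropriate bookkeeping, $\rho_{k,G}(W)\le k(k-3)$ with $2\le|W|\le|V(G)|-1$, contradicting the potential lemma — except in the degenerate case in which the merged vertex lies in $H$, which is handled by a secondary reduction (a more careful choice of the identified pair, or an explicit analysis of the small configuration produced, which in that case is already enough to exhibit $G$ as an Ore-composition of two strictly smaller $k$-critical graphs of potential $k(k-3)$, hence $k$-Ore by minimality, contradicting the choice of $G$). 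In all cases we reach a contradiction, proving the statement; taking $k=4$ gives Theorem~\ref{thm-edges}.
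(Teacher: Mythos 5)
This statement is not proved in the paper at all: it is quoted from Kostochka and Yancey \cite{KY12-brooks}, so there is no internal proof to compare yours against. Your outline does reconstruct the strategy of that source correctly. The potential $\rho_k(H)=(k+1)(k-2)|V(H)|-2(k-1)|E(H)|$ is the right normalization, your computations that $\rho_k(K_k)=k(k-3)$ and that an Ore-composition satisfies $\rho_k(O(G_1,G_2))=\rho_k(G_1)+\rho_k(G_2)-k(k-3)$ are correct, the reduction of both halves of the theorem to the single assertion that no $k$-critical non-$k$-Ore graph has $\rho_k\ge k(k-3)$ is sound, and the minimal-counterexample setup (a degree-$(k-1)$ vertex with two non-adjacent neighbours to identify) is exactly how the argument proceeds.

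As a proof, however, this has two genuine gaps, both of which you have in effect flagged yourself. First, the potential lemma (every proper $W$ with $2\le|W|\le|V(G)|-1$ satisfies $\rho_{k,G}(W)\ge k(k-3)+2$, with stronger bounds for small $W$) is where essentially all the difficulty lives; ``contract/modify $G[W]$ into a bounded gadget'' is not an argument --- the actual construction replaces $W$ by a $(k-1)$-clique attached to the outside according to a $(k-1)$-colouring of $G[W]$, and one must verify that the resulting graph is not $(k-1)$-colourable and that the potential bookkeeping closes, including the case where the new critical subgraph swallows part of the gadget. Second, and more importantly for the statement as given here, the characterization of equality --- the ``moreover'' clause --- is not a by-product of the inequality; it is the harder half, and in your sketch it is hidden inside ``a secondary reduction \dots\ already enough to exhibit $G$ as an Ore-composition of two strictly smaller $k$-critical graphs.'' Exhibiting that Ore-composition in the boundary case $\rho_k(G)=k(k-3)$ is precisely what has to be proved. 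So the architecture matches the cited source, but the proposal is an outline of \cite{KY12-brooks}, not a proof.
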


We will see below that every \plfnof-graph is a $4$-Ore graph. On the other hand,
our first result says the following.

\begin{theorem}\label{thm-or}
Every $4$-Ore graph has at least four triangles.
Moreover, a $4$-Ore graph $G$ has exactly four triangles if and only if $G$ is a \plfnof-graph.
\end{theorem}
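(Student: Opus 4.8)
The plan is to prove both directions of Theorem~\ref{thm-or} by induction on the structure of $4$-Ore graphs, exploiting the recursive definition via Ore-compositions. Recall that every $4$-Ore graph $G$ is obtained from copies of $K_4$ by repeated Ore-compositions $O(G_1,G_2)$, where one deletes an edge $xy$ of $G_1$, splits a vertex $z$ of $G_2$ into $z_1,z_2$, and identifies $x=z_1$, $y=z_2$. The base case $K_4$ has exactly four triangles, so the inductive hypothesis will be that each of $G_1,G_2$ (which are smaller $4$-Ore graphs) has at least four triangles, and exactly four iff it is a \plfnof-graph (suitably interpreted, since $G_2$ may itself be $K_4$).

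First I would carefully track how triangles behave under an Ore-composition. Deleting the edge $xy$ from $G_1$ destroys exactly the triangles of $G_1$ through $xy$; splitting $z$ in $G_2$ into $z_1,z_2$ destroys exactly the triangles of $G_2$ through $z$; and no new triangle is created unless $x$ and $y$ already had a common neighbor after the identification, which does not happen for an Ore-composition of this type since $z_1,z_2$ are nonadjacent in the split and $x,y$ are nonadjacent in $G_1-xy$ (one must check that a common neighbor of $x$ and $y$ in $G$ would have to come from a common neighbor of $x,y$ in $G_1$ — but $xy$ need not be a diamond edge, so $G_1$ could have such a vertex; this subtlety needs handling). The key quantitative step is: if $xy$ lies in $t_1 \ge 0$ triangles of $G_1$ and $z$ lies in $t_2 \ge 0$ triangles of $G_2$, then the number of triangles of $G$ is at least $(\text{\# triangles of }G_1) - t_1 + (\text{\# triangles of }G_2) - t_2$. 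Since $G_1, G_2$ each have $\ge 4$ triangles, to get the lower bound ``$\ge 4$'' for $G$ I need to bound $t_1$ and $t_2$. The cleanest route: show any edge of a $4$-critical graph lies in at most two triangles (an edge in three triangles together with a degree-$2$ vertex or small separator would contradict criticality — more precisely, use that $K_4$-subgraphs and small cuts are forbidden), and likewise any vertex-split is chosen at a vertex $z$ through which we may assume few triangles pass. One must be a little careful because the definition allows splitting an arbitrary vertex; I expect to argue that if too many triangles pass through the split vertex or deleted edge, then $G$ fails to be $4$-critical, contradicting the cited fact that all $4$-Ore graphs are $4$-critical.

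Next, for the equality characterization, the forward direction (\plfnof $\Rightarrow$ exactly four triangles) follows because \plfnof-graphs are planar $4$-critical with exactly four triangles essentially by definition of the class, combined with the earlier remark that every \plfnof-graph is $4$-Ore — so this direction is mostly bookkeeping, possibly needing the observation that the diamond-expansion construction of $\twg$ keeps the triangle count at four (each diamond expansion deletes a diamond edge, killing its two triangles, and glues in a $K_4$, which has four triangles but two of them are killed by the vertex split, netting $4 - 2 + 4 - 2 = 4$). The substantive direction is: a $4$-Ore graph $G$ with exactly four triangles must be a \plfnof-graph, i.e.\ planar, $4$-face-free, with its four triangles arranged as in the $\twg$/$H_1$-type description. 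Here I would again induct on the Ore-decomposition. If $G = O(G_1, G_2)$ with exactly four triangles, then by the counting inequality and the inductive lower bounds, equality forces $G_1$ and $G_2$ each to have exactly four triangles, $t_1 = t_2 = 2$ (so $xy$ is a diamond edge of $G_1$ and $z$ lies in exactly two triangles of $G_2$, i.e.\ $z$ is an endpoint of a diamond edge), and no extra triangles are created. By induction $G_1$ is a \plfnof-graph (or $K_4$) and $G_2$ likewise; one then checks that gluing a \plfnof-graph to another along a diamond edge via such a split yields again a \plfnof-graph — in particular that the resulting graph is planar and $4$-face-free, which needs the structural fact that in a \plfnof-graph the diamond edges and the split vertices sit in a controlled position (the two diamonds are disjoint, and the relevant faces are triangles). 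Planarity is preserved because an Ore-composition at a diamond edge / diamond vertex can be realized by pasting the plane drawings along the triangular faces bordering the diamond.

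The main obstacle I anticipate is the equality-case structural analysis: proving that the glue of two \plfnof-graphs along a diamond is again \plfnof requires understanding precisely where the four triangles and the diamond edges lie in a general \plfnof-graph, and ruling out that an Ore-composition could accidentally create a $4$-face or a fifth triangle or destroy planarity. In particular one must show that when $t_1 = 2$ the two triangles on $xy$ in $G_1$ share only the edge $xy$ (forming a genuine diamond) and that their ``outer'' edges are themselves diamond edges of $G$, so the induction on the class structure (not just on triangle count) goes through; and symmetrically that the split vertex $z$ of $G_2$ is the tip of a diamond. I would isolate this as a lemma: in a \plfnof-graph, an edge lies in two triangles iff it is one of the two (disjoint) diamond edges of the $\twg$-decomposition, and each such diamond's pair of triangles meet exactly in that edge. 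With that lemma in hand, the rest is a matching-up of the two recursive descriptions (the Ore-decomposition of $4$-Ore graphs and the diamond-expansion decomposition of $\twg$, together with the exceptional graph $H_1$ and its analogues built using the quasi-edge $H_0$), which I expect to be routine but lengthy case analysis. Finally, I should double-check the degenerate cases where $G_1$ or $G_2$ equals $K_4$, since then ``\plfnof-graph'' must be read as including $K_4$ as the trivial member of $\twg$.
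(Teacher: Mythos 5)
Your overall strategy for the classification part — induct on the Ore-decomposition, show that having exactly four triangles forces the deleted edge to be a diamond edge and both parts to be $4$-Ore graphs with exactly four triangles, then match the result against the Thomas--Walls-type description — is essentially the paper's route (Theorem~\ref{four} and Lemma~\ref{lemma-44ore}). However, your argument for the lower bound ``every $4$-Ore graph has at least four triangles'' has a genuine gap. You propose $t(G)\ge \bigl(t(G_1)-t_1\bigr)+\bigl(t(G_2)-t_2\bigr)$ and plan to force $t_1,t_2\le 2$. For the deleted edge, $t_1\le 2$ is fine (this is Claim~\ref{g'}). For the split vertex it fails: in $K_4$ — which is exactly the graph $G_2$ used in every diamond expansion, hence in every graph of $\twg$ — each vertex lies in three triangles, so your inequality yields only $4-2+4-3=3$. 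Your proposed rescue (``if too many triangles pass through the split vertex then $G$ is not $4$-critical'') cannot work, because every $4$-Ore graph is $4$-critical no matter which vertex is split. The missing idea is to strengthen the induction hypothesis to a statement about the split graph itself: every graph obtained from a $4$-Ore graph by splitting a vertex still has at least two triangles (the paper's Claim~\ref{g''}). This is strictly better than ``$t(G_2)$ minus the triangles through $z$,'' because a triangle through $z$ survives the split whenever its other two vertices remain attached to the same half of $z$ — which is precisely what rescues the $K_4$ case.

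A second, smaller gap sits in your equality analysis. You assert that equality forces $t(G_2)=4$ from the fact that the split of $G_2$ contributes only two triangles; this requires the separate inductive fact that a $4$-Ore graph with at least five triangles retains at least three triangles after deleting (hence after splitting at) any vertex — the paper's Claim~\ref{three}, which itself rests on Claim~\ref{two} about deleting two vertices. These are not ``routine bookkeeping'' consequences of the counting inequality; they need their own inductions. Finally, for the ``only if'' direction the paper does not verify planarity and absence of $4$-faces of the glued graph abstractly: it fully classifies $4,4$-Ore graphs as $\twg\cup\twg_1\cup\twg_2$ by inspecting where the diamond edge and split vertex can sit in the smaller pieces, and reads planarity off the explicit description; your proposed lemma locating the diamond edges of a \plfnof-graph is essentially that classification in disguise. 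Also note that the ``earlier remark'' you lean on, that every \plfnof-graph is $4$-Ore, is a forward reference proved only later via an Euler-formula count against the tight edge bound of Theorem~\ref{thm-edges}; it is not an available input here, although for Theorem~\ref{thm-or} alone the ``if'' direction is immediate since $G$ is already assumed to be $4$-Ore.
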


This reduces a ``topological'' result on plane graphs to a result on abstract graphs.
And it turns out that \plfnof-graphs do not differ much from the Thomas-Walls graphs.
Let $\twg_1$ denote the graphs obtained from a graph in $\twg$ by replacing a diamond edge
by the Havel's quasi-edge $H_0$ (see Figure~\ref{fig-tw1}).  Let $\twg_2$ denote the graphs obtained from a graph in $\twg_1$ by replacing a diamond edge
by the Havel's quasi-edge $H_0$ (see Figure~\ref{fig-tw2}).  Note that $H_0$ contains no diamond edges, and thus each graph in $\twg_2$ can be obtained from a graph in
$\twg$ by replacing two vertex-disjoint diamond edges by the Havel's quasi-edge $H_0$.

\begin{figure}
\begin{center}
\includegraphics{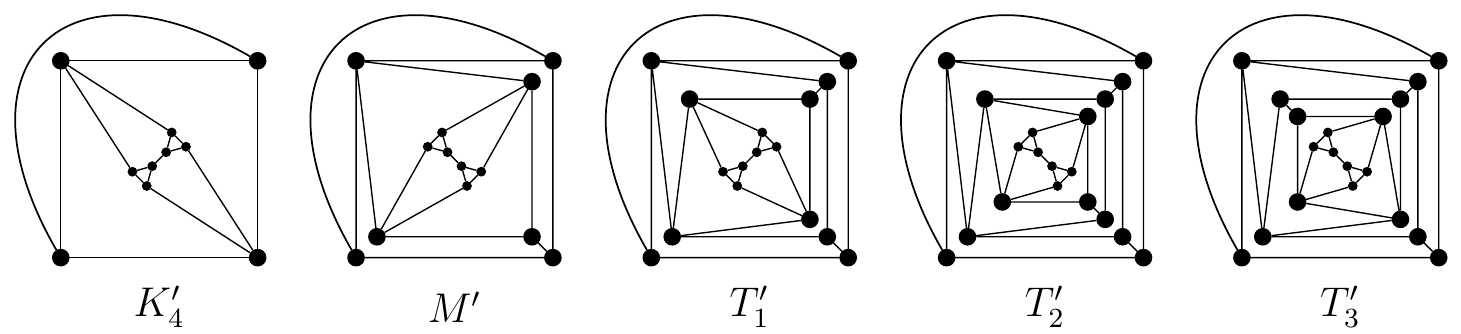}
\end{center}
\caption{Some members of $\twg_1$.}\label{fig-tw1}
\end{figure}

\begin{figure}
\begin{center}
\includegraphics{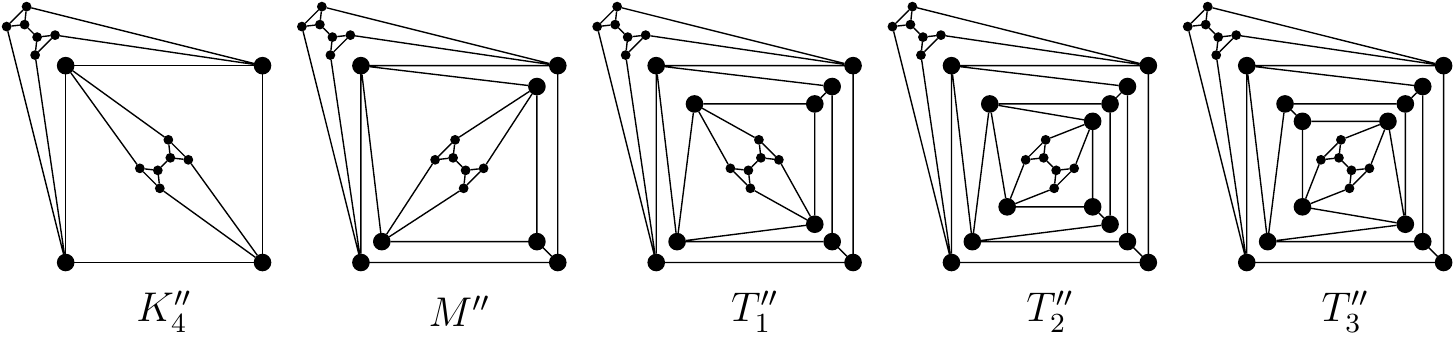}
\end{center}
\caption{Some members of $\twg_2$.}\label{fig-tw2}
\end{figure}

\begin{theorem}\label{thm-tw}
The class of \plfnof-graphs is equal to $\twg\cup\twg_1\cup \twg_2$.
\end{theorem}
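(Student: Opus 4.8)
My plan is to prove the two inclusions separately, using Theorem~\ref{thm-or} to move the whole problem into the world of $4$-Ore graphs. By Theorem~\ref{thm-or}, the class of \plfnof-graphs is exactly the class of $4$-Ore graphs having exactly four triangles, so it suffices to show that this latter class equals $\twg\cup\twg_1\cup\twg_2$. I would first record two structural facts about $4$-Ore graphs, each proved by induction along an Ore-decomposition. Fact (a): every edge of a $4$-Ore graph lies in at most two triangles; this is immediate because in $O(G_1,G_2)$ (delete $xy$ from $G_1$, split $z$ in $G_2$) there are no edges between the ``$G_1$-part'' and the ``$G_2$-part'' other than through the identified vertices, so no edge acquires a new triangle, and each edge of $K_4$ lies in exactly two triangles. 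Fact (b): a $4$-Ore graph $G$ other than $K_4$ can be written as $O(G_1,K_4)$ for a (necessarily smaller) $4$-Ore graph $G_1$, i.e.\ a copy of $K_4$ with one vertex split can be peeled off last; this follows from the reassociation properties of Ore-compositions developed in~\cite{KY12-brooks}. I also need the relevant properties of Havel's quasi-edge $H_0$, verified directly from its definition (Figure~\ref{figH1H2}): $H_0$ is $3$-colorable, $u$ and $v$ are non-adjacent but receive distinct colors in every $3$-coloring, every assignment of two distinct colors to $u,v$ extends to a proper $3$-coloring of $H_0$, for every edge $e$ the graph $H_0-e$ has a proper $3$-coloring with $u=v$, and $H_0$ has exactly two triangles, each bounded by a triangular face of a plane embedding in which $u$ and $v$ lie on the outer face and have no common neighbour on the outer boundary.

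\textbf{Forward inclusion.} For $G\in\twg$ I argue by induction on the number of diamond expansions. $K_4$ is a \plfnof-graph, has exactly four triangles, each of which is a triangular face, and has no $4$-face. For the inductive step, a diamond expansion deletes a diamond edge $xy$; by the inductive hypothesis its two triangles are triangular faces, so deleting $xy$ merges them into a $4$-face bounded by a $4$-cycle $x\,a\,y\,b$, and the expansion fills this quadrilateral with a copy of $K_4$ whose split vertex is identified with $x$ and $y$. A direct local computation (independent of the rest of the graph) shows that this destroys the two triangles on $xy$, creates exactly two new triangular faces and exactly two pentagonal faces, and changes nothing else; hence the number of triangles stays four, all triangles remain triangular faces, and no $4$-face is created. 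As $\twg$-graphs are $4$-Ore they are $4$-critical by~\cite{KY12-brooks}, so $\twg\subseteq{}$\plfnof. For $\twg_1$ and $\twg_2$ I prove a substitution lemma: if $G$ is a \plfnof-graph and $xy$ a diamond edge, then replacing $xy$ by $H_0$ (identify $u=x$, $v=y$, add the interior of $H_0$) yields a \plfnof-graph. Non-$3$-colorability is preserved since any $3$-coloring of the new graph restricts to a proper $3$-coloring of $G-xy$ with $x\ne y$, hence a $3$-coloring of $G$. For $4$-criticality, if $e$ is an edge of the ``$G$-part'' then a proper $3$-coloring of $G-e$ has $x\ne y$ (as $xy$ is still present) and extends over $H_0$; if $e$ is inside $H_0$, use that every proper $3$-coloring of $G-xy$ has $x=y$ (else $G$ would be $3$-colorable) together with a proper $3$-coloring of $H_0-e$ having $u=v$, matching the colours at the seam. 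The triangle count is preserved because deleting $xy$ kills two triangles and $H_0$ contributes exactly two, and no new triangle crosses the seam; and the embedding of $H_0$ inside the vacated quadrilateral creates no $4$-face because its interior triangular faces survive and the two ``seam'' faces have length at least five (no common neighbour of $u,v$ on the outer boundary of $H_0$). Applying the lemma once gives $\twg_1\subseteq{}$\plfnof, and, since $H_0$ has no diamond edge, applying it at a second vertex-disjoint diamond edge gives $\twg_2\subseteq{}$\plfnof.

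\textbf{Backward inclusion.} Let $G$ be a \plfnof-graph; by Theorem~\ref{thm-or} it is $4$-Ore with exactly four triangles, and I induct on $|V(G)|$. If $G=K_4$ then $G\in\twg$. Otherwise, by fact (b) write $G=O(G_1,K_4)$, deleting $xy\in E(G_1)$ and splitting a vertex $z$ of $K_4$; since $N_{K_4}(z)$ induces a triangle, any proper split cuts exactly two of the three triangles through $z$, so the $K_4$-side contributes exactly $4-2=2$ triangles to $G$, and, as above, no Ore-composition creates triangles. Writing $t_1$ for the number of triangles of $G_1$ and $t_1'$ for the number through $xy$, we get $4=(t_1-t_1')+2$, so $t_1-t_1'=2$. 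By Theorem~\ref{thm-or}, $t_1\ge 4$, and by fact (a), $t_1'\le 2$; hence $t_1=4$ and $t_1'=2$. Thus $G_1$ is a $4$-Ore graph with exactly four triangles — a \plfnof-graph smaller than $G$ — and $xy$ is a diamond edge of $G_1$, so $G=O(G_1,K_4)$ is a diamond expansion of $G_1$. By the induction hypothesis $G_1\in\twg\cup\twg_1\cup\twg_2$, and it remains to check that a diamond expansion of a graph in $\twg$ (resp.\ $\twg_1$, $\twg_2$) lies again in $\twg$ (resp.\ $\twg_1$, $\twg_2$): for $\twg$ this is the definition, and for $\twg_1,\twg_2$ one notes that since $H_0$ contains no diamond edge, the diamond edge $xy$ being expanded is inherited from the underlying $\twg$-graph and is vertex-disjoint from the edges replaced by copies of $H_0$, so the diamond expansion and the $H_0$-substitutions act on pairwise independent parts of the graph and commute, whence $G$ is again a $\twg$-graph with one (resp.\ two vertex-disjoint) diamond edges replaced by $H_0$.

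I expect the backward inclusion to be the main obstacle, and within it the two structural facts about $4$-Ore graphs — especially fact (b) (peeling off a $K_4$ last) and the ``no new triangles'' behaviour of Ore-compositions — together with the precise verification of the colouring, triangle-count, and embedding properties of $H_0$ needed for the substitution lemma. A secondary technical point is the bookkeeping showing that diamond expansions interact cleanly with $H_0$-substitutions, handling the cases where the relevant edges share a vertex; this is exactly where the ``vertex-disjoint'' clause in the definition of $\twg_2$ is essential.
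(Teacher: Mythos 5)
Your overall strategy (reduce to $4$-Ore graphs with four triangles, then prove two inclusions) matches the paper, and your forward inclusion is essentially fine: the paper disposes of it in one sentence, getting $4$-criticality for free from the $4$-Ore property, and it obtains the implication from \plfnof-graphs to $4$-Ore graphs by the Euler-formula edge count together with Theorem~\ref{thm-edges}, just as you implicitly rely on via Theorem~\ref{thm-or}. The fatal problem is your fact~(b), on which the entire backward inclusion rests: it is \emph{false} that every $4$-Ore graph $G\neq K_4$ can be written as $O(G_1,K_4)$. If $G=O(G_1,K_4)$, then $G$ must contain a triangle $pqr$ all of whose vertices have degree $3$ in $G$ and whose external neighbours are exactly the two (non-adjacent) halves of the split vertex of the $K_4$. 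Now take $G=O(K_4,M)$, where $M$ is the Moser spindle and the vertex split is its degree-$4$ hub, split into two degree-$2$ halves; equivalently, $G$ is $K_4$ with one edge replaced by Havel's quasi-edge $H_0$, i.e.\ the first member of $\twg_1$ ($K_4'$ in Figure~\ref{fig-tw1}). Its four triangles are the two triangles $xpq$, $ypq$ of the old $K_4$ (where $x$ and $y$ now have degree $4$) and the two triangles inside $H_0$, each of which has three distinct external neighbours. No triangle of $G$ has the required form, so $G$ is not a diamond expansion of anything. Indeed, if your induction closed, it would show that every $4,4$-Ore graph is obtained from $K_4$ by diamond expansions alone, i.e.\ lies in $\twg$ --- contradicting the very presence of $\twg_1$ and $\twg_2$ in the statement you are proving.

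The paper circumvents this by working only with the weak decomposition of Claim~\ref{f1}, in which neither piece need be $K_4$, upgrading it through Claims~\ref{g'}--\ref{three} to Theorem~\ref{four} (both pieces $G'$ and $G''$ are $4,4$-Ore, $xy$ is a diamond edge of $G'$, $t(G[B])=2$, and $x*y$ lies in exactly two triangles of $G''$), and then running the induction in Lemma~\ref{lemma-44ore} on \emph{both} pieces, followed by a case analysis of how the split side $G''$ can look. The case your peeling misses --- $G''$ equal to the Moser spindle split at its hub, so that $G[B]=H_0$ --- is precisely what generates $\twg_1$ and $\twg_2$. Your triangle bookkeeping in the backward direction ($t_1=4$ and $t_1'=2$) is the right computation and survives in the general setting, but it must be derived for an arbitrary second factor, which is what Claims~\ref{two} and~\ref{three} (triangle counts after deleting one or two vertices from a $4$-Ore graph with at least five triangles) are needed for; your proposal contains no substitute for them.
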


The \plf-graphs may have arbitrarily many $4$-faces.
Let $G$ be a plane graph, let $C_P=xz'yx'zy'$ be a $6$-cycle in $G$
and let $\Delta$ be the closed disk bounded by $C_P$.  Let $P$ be the
subgraph of $G$ consisting of the vertices and edges drawn in $\Delta$.
If all neighbors of the vertices $x'$, $y'$ and $z'$ belong to $P$
and all faces of $G$ contained in $\Delta$ have length $4$, then we say that $P$ is a \emph{patch}.
The patch $P$ is \emph{critical} if $x'$, $y'$ and $z'$ have degree at least $3$ and
every $4$-cycle in $P$ bounds a face.
We will see that if $G_0$ is a \plf-graph and a vertex $v\in V(G_0)$ has exactly
$3$ neighbors $x$, $y$ and $z$, then the graph $G_v$ obtained from $G_0-v$ by inserting
a critical patch $P$ with boundary $C_P=xz'yx'zy'$ (where $x'$, $y'$ and $z'$ are new vertices)
is again a \plf-graph. For example, the graph $H_2$ in Figure~\ref{figH1H2} is obtained from the graph $H_1$ 
by replacing the
vertices $r$ and $s$ with patches bounded by the cycles $r_1\ldots r_6$ and
$s_1\ldots s_6$, respectively. This gives a way to construct from every \plfnof-graph an infinite
family of \plf-graphs. Our main result is that every \plf-graph can be obtained this way.

\begin{theorem}\label{thm-main}
A plane $4$-critical graph has exactly four triangles if and only if
it is obtained from a \plfnof-graph by replacing several (possibly zero)
non-adjacent $3$-vertices with critical patches.
\end{theorem}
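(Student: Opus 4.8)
The plan is to prove the two directions separately, with the "if" direction being the easier one. For the "if" direction, I would show that inserting a critical patch at a non-adjacent $3$-vertex preserves both $4$-criticality and the number of triangles. Since a critical patch $P$ with boundary $C_P = xz'yx'zy'$ has all internal faces of length $4$, it contains no triangle using an internal edge; the only triangles it could create are among $x,y,z$ and the patch boundary, but the condition that every $4$-cycle in $P$ bounds a face, together with the $6$-cycle structure, rules out short chords, so no new triangle is created and the four original triangles of $G_0$ survive. For $4$-criticality: given a $3$-coloring constraint, one shows $G_v$ is not $3$-colorable (a $3$-coloring would restrict to a "good" coloring of the patch boundary, which extends to a $3$-coloring of $G_0-v+v$ — here the degree-$3$ condition on $x',y',z'$ and the patch being "critical" is exactly what forces the three boundary colors $c(x'),c(y'),c(z')$ to be obtainable as $c(v)$'s three neighbor-colors in a valid extension), and that every proper subgraph is $3$-colorable (removing an edge of $G_v$ either lies inside the patch, where criticality of $P$ gives colorability of that piece and one glues, or corresponds to an edge of $G_0$, where $4$-criticality of $G_0$ applies). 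This uses only elementary coloring-extension arguments together with the definition of "critical patch."

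For the "hard" direction — that every \plf-graph $G$ arises this way — I would argue by induction on the number of $4$-faces. If $G$ has no $4$-face, then $G$ is a \plfnof-graph by definition and we are done (taking zero patches). So suppose $G$ has a $4$-face $f = abcd$. The key structural claim to establish is: \emph{some $4$-face of $G$ lies inside a subgraph that is a critical patch whose boundary cycle can be contracted to a single $3$-vertex}, yielding a smaller \plf-graph $G'$ with fewer $4$-faces, to which induction applies; then $G$ is recovered from $G'$ by re-expanding that $3$-vertex. To find such a patch, I would consider a maximal region $\Delta$ of $G$ bounded by a cycle and containing only $4$-faces in its interior, and analyze its boundary. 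Because $G$ has exactly four triangles and is $4$-critical, discharging or direct degree arguments (in the spirit of the proofs of Theorems~\ref{thm-edges} and~\ref{thm-or}) should show that the boundary of such a maximal $4$-face region is a $6$-cycle $xz'yx'zy'$ with $x',y',z'$ of degree $\ge 3$ and all their neighbors inside — i.e., exactly a critical patch — and that contracting it to a $3$-vertex keeps the graph plane, $4$-critical (one re-runs the gluing lemma from the "if" direction in reverse), and with exactly four triangles.

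The main obstacle I expect is the structural claim in the hard direction: showing that a maximal $4$-face region is always bounded by a clean $6$-cycle with the right degree conditions, rather than something more complicated (a longer boundary, chords, or the region interacting with the triangles). This is where the global constraint "exactly four triangles" must be used decisively — presumably via an edge-count or discharging argument that a plane $4$-critical graph with four triangles cannot have a large sparse all-quadrilateral region without such a $6$-cycle cut, combined with a careful case analysis of how the boundary of $\Delta$ meets the rest of $G$. A secondary technical point is verifying that the contracted graph $G'$ is genuinely $4$-critical and not merely $4$-chromatic: one needs that $G'$ has no separating structure that was "propped up" only by the patch, which should follow from criticality of the patch $P$ itself. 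Once the structural claim is in hand, the induction and the bookkeeping of triangles and $4$-faces is routine. Finally, I would combine this with Theorem~\ref{thm-tw} to get the fully explicit description, though the statement of Theorem~\ref{thm-main} as phrased only needs the reduction to \plfnof-graphs.
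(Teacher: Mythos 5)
Your ``if'' direction is essentially the paper's Lemma on patch replacement (inserting a critical patch preserves $4$-criticality and the triangle count), and your sketch of it is on the right track, though the subtlest case---removing an edge of the boundary $6$-cycle of the patch and still extending the coloring---needs the Gimbel--Thomassen characterization of $C$-critical quadrangulated disks, not just ``elementary coloring-extension arguments.''

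The hard direction has a genuine gap, and it sits exactly where you flagged it. Your entire induction rests on the structural claim that a maximal all-quadrilateral region of a \plf-graph is bounded by a $6$-cycle $xz'yx'zy'$ with $x',y',z'$ of degree at least $3$ and all neighbors inside, and you offer only ``discharging or direct degree arguments should show'' this. That claim is essentially the whole content of the theorem, and it cannot be obtained by local degree counting: a quadrangulated disk bounded by an $8$-cycle can perfectly well be $C$-critical (Lemma~\ref{lemma-critin} and Theorem~\ref{thm-gimbel} only force length $6$ when the boundary has length at most $6$), so nothing local prevents a larger or messier quadrangulated region, and ruling it out requires the global constraint of four triangles fed through the precolored-cycle extension theorems. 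The paper's actual reduction is quite different from contracting a whole patch: it takes a minimal counterexample with no patches, identifies two opposite vertices of a single $4$-face, extracts a $4$-critical subgraph of the contracted graph, and then controls how the original graph sits over that subgraph via a ``stretching'' operation whose special faces of length $6$ and $7$ are analyzed using Theorems~\ref{thm-gimbel}, \ref{thm-gimbelext}, \ref{thm-7cyc} and \ref{thm-aksen}; this is preceded by a sequence of reducibility claims (no separating $4$-cycles, restricted separating $5$-cycles, forbidden diamond and Havel-type configurations) and ends with an explicit case analysis of three small intermediate graphs, each killed by exhibiting a coloring and invoking the $7$-cycle theorem. None of this machinery appears in your proposal, and without it the step ``the boundary of $\Delta$ is a clean $6$-cycle and contracting it preserves $4$-criticality'' is unsupported. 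A secondary unaddressed point: identifying or contracting can create new triangles or destroy criticality, and your plan does not explain how the triangle count of the contracted graph is kept at exactly four, which is what lets the induction hypothesis apply.
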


Thus, even though there are infinitely many \plf-graphs, we know the structure of all of them.
This fully answers Erd\H os' question from 1990.
In particular, the result yields that Sachs had the right intuition in 1972: his question has positive
answer if we replace ``less than two'' with ``at most two.'' Also, Aksenov and Mel'nikov~\cite{aksmel2}
conjectured, in particular, that $H_1$ is the unique smallest \plf-graph with the minimum distance $1$
between triangles and $H_2$ is the unique smallest \plf-graph with the minimum distance $2$
between triangles. Our description confirms this.

Havel~\cite{havel1969} asked the following question:
Does there exist a constant $C$ such that every planar graph with
the minimal distance between triangles at least $C$ is 3-colorable?
The graph $H_2$ shows that $C\geq 3$, and a further example of Aksenov and Mel'nikov~\cite{aksmel2} shows that $C \geq 4$,
which is the best known lower bound.
The existence of (large) $C$ was recently proved by Dvo\v{r}\'{a}k, Kr\'al' and Thomas~\cite{dvorak09}.
It is conjectured by Borodin and Raspaud that $C=4$ is sufficient, and our result confirms this conjecture
for graphs with four triangles.  For more details, see a recent survey of Borodin~\cite{borodinsurvey}.

The proof of Theorem~\ref{thm-main} can be converted to a polynomial-time algorithm to find a $3$-coloring
of a planar graph with four triangles or to decide that no such coloring exists.  However, a more
general algorithm of Dvo\v{r}\'ak, Kr\'al' and Thomas~\cite{dvorakalg} can also be used,
and thus we do not provide further details.

The structure of the paper is as follows. In the next section we study the structure of $4$-Ore graphs.
In Section~\ref{sec-nof} we prove Theorems~\ref{thm-or} and \ref{thm-tw}. In the last section, we describe all \plf-graphs by proving 
Theorem~\ref{thm-main}.

\section{The structure of $4$-Ore graphs}
In this section, we study $4$-Ore graphs with few triangles.
The following claim is simply a reformulation of the definition of a $k$-Ore graph.

\begin{claim} \label{f1}
Every $k$-Ore graph $G\neq K_k$ has a separating set $\{x,y\}$ and two
vertex subsets $A$ and $B$ such that
\begin{itemize}[itemsep=-1mm]
\item $A\cap B=\{x,y\}$, $A\cup B=V(G)$ and no edge of $G$ connects $A\setminus\{x,y\}$
with $B\setminus\{x,y\}$,
\item $x$ and $y$ are non-adjacent in $G$ and have no common neighbor in $B$,
\item the graph $G'$ obtained from $G[A]$ by adding the edge $xy$ is a
$k$-Ore graph, and
\item the graph $G''$ obtained from $G[B]$ by identifying $x$ with $y$ into
a new vertex $x*y$ is a $k$-Ore graph.
\end{itemize}
\end{claim}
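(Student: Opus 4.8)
The plan is to unwind the recursive definition of a $k$-Ore graph; as the paper observes, there is essentially nothing to do beyond bookkeeping. Since $G$ is $k$-Ore and $G\neq K_k$, the graph $G$ is not one of the base copies of $K_k$, so the last operation in any construction of $G$ produces it as an Ore-composition of two $k$-Ore graphs: there exist $k$-Ore graphs $G_1,G_2$ with $G=O(G_1,G_2)$. I would fix one such representation together with the data it records: the deleted edge $xy\in E(G_1)$, the vertex $z\in V(G_2)$ that is split into $z_1,z_2$ (each of positive degree), and the identifications $x=z_1$, $y=z_2$.

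Next I would name the two sides explicitly. Let $A=V(G_1)$, regarded inside $V(G)$, and let $B=(V(G_2)\setminus\{z\})\cup\{x,y\}$, where $z_1=x$ and $z_2=y$. Then $A\cap B=\{x,y\}$ and $A\cup B=V(G)$ directly from the construction of $O(G_1,G_2)$. Now each of the four bullets is immediate from the description of the Ore-composition:
(i) every edge of $G$ is either an edge of $G_1$ other than $xy$ (hence inside $A$) or an edge of $G_2$ with $z$ possibly renamed to $z_1$ or $z_2$ (hence inside $B$), so no edge joins $A\setminus\{x,y\}$ to $B\setminus\{x,y\}$; since $|V(G_1)|,|V(G_2)|\geq k\geq 3$ both $A\setminus\{x,y\}$ and $B\setminus\{x,y\}$ are nonempty, so $\{x,y\}$ indeed separates $G$.
(ii) the edge $xy$ was deleted from $G_1$ and is not recreated (the only edge that identification could place inside $\{x,y\}=\{z_1,z_2\}$ would come from a loop at $z$), so $x$ and $y$ are non-adjacent in $G$; and since the split of $z$ assigns each edge at $z$ to exactly one of $z_1,z_2$, no vertex of $V(G_2)\setminus\{z\}$ is adjacent to both, i.e.\ $x$ and $y$ have no common neighbor in $B$.
(iii) $G[A]$ consists of exactly the edges of $G_1$ except $xy$, so adding $xy$ back yields $G_1$, which is $k$-Ore.
(iv) $G[B]$ is precisely $G_2$ with $z$ split into $z_1=x$ and $z_2=y$, so identifying $x$ with $y$ yields $G_2$, which is $k$-Ore. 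That finishes the argument.

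There is no real obstacle here; the claim is a restatement of the definition. The only points requiring a little care are: using the convention, standard in this line of work and needed so that Ore-compositions of critical graphs stay critical, that a vertex split distributes the edges at $z$ to exactly one endpoint each (this is what forces the ``no common neighbor in $B$'' clause); and checking that passing to the induced subgraphs $G[A]$ and $G[B]$ introduces no spurious edges, which is clear because $E(G)$ is, up to the relabeling of $z$, the disjoint union of $E(G_1)\setminus\{xy\}$ and the edge set of the split of $G_2$. If one wanted to be exhaustive one could also note that $G$ may admit several such decompositions, but the statement only asserts existence, so fixing any one representation suffices.
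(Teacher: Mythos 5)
Your proposal is correct and matches the paper exactly: the paper gives no proof at all, stating only that the claim ``is simply a reformulation of the definition of a $k$-Ore graph,'' and your argument is precisely the bookkeeping that remark leaves implicit (take the last Ore-composition $G=O(G_1,G_2)$, set $A=V(G_1)$ and $B=(V(G_2)\setminus\{z\})\cup\{x,y\}$, and read off the four bullets). Nothing further is needed.
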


Our first goal is to obtain a similar decomposition when we restrict ourselves
to $4$-Ore graphs with $4$ triangles (Theorem~\ref{four} below).

\begin{claim} \label{g'}
Every edge in each $4$-Ore graph is contained in at most $2$ triangles.
\end{claim}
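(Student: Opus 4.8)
The plan is to argue by induction on $|V(G)|$. The base case is $G=K_4$, where every edge lies in exactly two triangles. For the inductive step, suppose $G\neq K_4$ is $4$-Ore and apply Claim~\ref{f1} to get the separating pair $\{x,y\}$, the sets $A,B$ with $A\cap B=\{x,y\}$ and $A\cup B=V(G)$, and the two $4$-Ore graphs $G'$ (obtained from $G[A]$ by adding the edge $xy$) and $G''$ (obtained from $G[B]$ by identifying $x$ and $y$ into $x*y$). Since $A$ and $B$ are proper subsets of $V(G)$, both $G'$ and $G''$ have fewer vertices than $G$, so by the inductive hypothesis every edge of $G'$ and every edge of $G''$ lies in at most two triangles.

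Now fix an edge $e=uv$ of $G$, and let $t(e)$ be the number of triangles of $G$ through $e$. Because $xy\notin E(G)$ and no edge of $G$ joins $A\setminus\{x,y\}$ to $B\setminus\{x,y\}$, the edge $e$ is contained entirely in $G[A]$ or entirely in $G[B]$; moreover, by the same no-crossing property, every common neighbor of $u$ and $v$ in $G$ lies on the side containing $e$. If $e\subseteq A$, then $e\in E(G')$, and for each common neighbor $w\in A$ of $u$ and $v$ the triple $\{u,v,w\}$ spans a triangle of $G'$ through $e$; distinct choices of $w$ give distinct triangles, so $t(e)\le 2$. If $e\subseteq B$, then (since $xy\notin E(G)$) at least one endpoint of $e$, say $u$, lies in $B\setminus\{x,y\}$; the quotient map $\pi$ collapsing $\{x,y\}$ to $x*y$ is the identity elsewhere, sends $e$ to an edge $\pi(e)$ of $G''$, and sends each triangle $\{u,v,w\}$ of $G$ through $e$ to a triangle $\{\pi(u),\pi(v),\pi(w)\}$ of $G''$ through $\pi(e)$. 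I claim this map on triangles is injective: if two triangles $\{u,v,w_1\}$ and $\{u,v,w_2\}$ with $w_1\neq w_2$ had the same image, we would need $\{w_1,w_2\}=\{x,y\}$, so $u\in B\setminus\{x,y\}$ would be a common neighbor of $x$ and $y$ lying in $B$, contradicting Claim~\ref{f1}; and the only degenerate possibility that $\pi(w)$ coincides with $\pi(u)$ or $\pi(v)$ for some common neighbor $w$ would force $w\in\{x,y\}$ to be adjacent to the other of $x,y$, contradicting the non-adjacency of $x$ and $y$. Hence $t(e)$ is bounded by the number of triangles of $G''$ through $\pi(e)$, which is at most $2$.

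The only real content is the case $e\subseteq B$: one must rule out the identification of $x$ and $y$ merging two distinct triangles of $G$ into a single triangle of $G''$. This is exactly the point at which the two non-obvious conclusions of Claim~\ref{f1} — that $x$ and $y$ are non-adjacent and have no common neighbor in $B$ — are needed; everything else is routine bookkeeping with the no-crossing-edges condition. I do not anticipate any further obstacles.
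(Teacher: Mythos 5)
Your proof is correct and follows essentially the same route as the paper: induction on $|V(G)|$ using the decomposition of Claim~\ref{f1}, observing that each edge lies entirely in $G[A]$ or $G[B]$, and mapping triangles through the edge into the smaller $4$-Ore graph $G'$ or $G''$. The only difference is that you spell out the injectivity of the triangle map under the identification of $x$ and $y$ (using non-adjacency and the absence of a common neighbor in $B$), a point the paper leaves implicit.
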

\begin{proof}
We prove the claim by induction on the order of a graph.  Let $G$ be a $4$-Ore graph
and $uv$ its edge and assume that the claim holds for all graphs with less than $|V(G)|$
vertices.

Each edge of $K_4$ is contained in exactly two triangles, and thus we can assume that $G\neq K_4$.
Let $\{x,y\}$, $A$, $B$, $G'$ and $G''$ be as in Claim~\ref{f1}.
Since $u$ is adjacent to $v$, either $\{u,v\}\subset A$ or $\{u,v\}\subset B$.
Let $G_0\in\{G[A],G[B]\}$ be the graph containing the edge $uv$, and let $G'_0\in\{G',G''\}$
be the corresponding $4$-Ore graph.  Let $u'v'$ be the edge of $G'_0$ corresponding to $uv$.
Every triangle of $G$ containing $uv$ maps to a triangle in $G'_0$ containing $u'v'$.
Since $|V(G'_0)|<|V(G)|$, the edge $u'v'$ is contained in at most two triangles in $G'_0$ by induction, and thus $uv$ is contained in at most two
triangles in $G$.
\end{proof}

For a graph $G$, let $t(G)$ denote the number of triangles in $G$.

\begin{claim} \label{g''}
If $G$ is a $4$-Ore graph, then $t(G)\ge 4$.
For every vertex $z\in V(G)$, every graph $G_z$ obtained from $G$ by splitting $z$ satisfies $t(G_z)\ge 2$.
Furthermore, if $G\neq K_4$, then $t(G-z)\ge 2$, and if $G=K_4$, then $t(G-z)=1$.
\end{claim}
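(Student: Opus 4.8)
The plan is to prove Claim~\ref{g''} by induction on $|V(G)|$, mirroring the structure of the proof of Claim~\ref{g'}. The base case is $G=K_4$, where $t(K_4)=4\ge 4$, every vertex split of $K_4$ yields (a subdivision-type graph on $5$ vertices that still contains) at least two triangles, and $K_4-z=K_3$ has exactly one triangle; all three assertions are immediate. So assume $G\neq K_4$ and take $\{x,y\}$, $A$, $B$, $G'$, $G''$ as in Claim~\ref{f1}, where $G'$ (on vertex set $A$ plus the edge $xy$) and $G''$ (on vertex set $B$ with $x,y$ identified) are smaller $4$-Ore graphs.

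For the first statement, I would count triangles of $G$ by where they live. Every triangle of $G'$ either avoids the edge $xy$, in which case it is a triangle of $G[A]\subseteq G$, or it uses $xy$, in which case (since $x,y$ have no common neighbor in $B$) it corresponds to a common neighbor of $x,y$ in $A$, i.e.\ to a path $xwy$ in $G$; likewise every triangle of $G''$ through $x{*}y$ corresponds to a path of the form $x w' y$ with $w'\in B\setminus\{x,y\}$ together with the two edges $xw'$, $yw'$ — wait, more carefully: a triangle of $G''$ through $x{*}y$ comes from a path $x\,w'\,y$ in $G[B]$, which is not a triangle of $G$ but contributes, when combined with a triangle of $G'$ through $xy$, to nothing directly. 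The clean way: triangles of $G$ not meeting both sides are exactly (triangles of $G[A]$) $\cup$ (triangles of $G[B]$); a triangle of $G$ meeting both $A\setminus\{x,y\}$ and $B\setminus\{x,y\}$ cannot exist since no edge joins these sets; so $t(G)=t(G[A])+t(G[B])\ge (t(G')-a)+(t(G''))$ where $a$ is the number of triangles of $G'$ using $xy$. Since $G'$ is $4$-Ore and smaller, $t(G')\ge 4$ by induction, and by Claim~\ref{g'} the edge $xy$ (when present in $G'$) lies in at most $2$ triangles, so $t(G[A])=t(G')-a\ge 4-2=2$; similarly, identifying $x$ and $y$ in $G[B]$ can only merge triangles or create new ones through $x{*}y$, and each triangle of $G''$ through $x{*}y$ arises from a path $xw'y$ in $G[B]$ — there are at most $2$ such by a Claim~\ref{g'}-type bound applied to $G''$ at the edge(s) at $x{*}y$ — so $t(G[B])\ge t(G'')-2\ge 4-2=2$. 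Hence $t(G)=t(G[A])+t(G[B])\ge 2+2=4$. This is the main calculation, and the subtle point I will need to handle carefully is the exact relationship between triangles of $G''$ and subgraphs of $G[B]$ after identification; I expect to use the "no common neighbor in $B$" hypothesis from Claim~\ref{f1} crucially here to rule out $x{*}y$ being in a triangle that lifts to an edge of $G$.

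For the second statement, splitting a vertex $z$ of $G$ into $z_1,z_2$ destroys only triangles through $z$ and converts them into at most paths; but a triangle of $G$ not containing $z$ survives intact in $G_z$. Since at most one of the four-or-more triangles of $G$ can be destroyed if $z$ has a controlled neighborhood — actually, the number of triangles through a single vertex $z$ can be large, so instead I would argue via the decomposition: $z$ lies in one of $G[A]$, $G[B]$ (or is one of $x,y$), reduce to a split of the smaller $4$-Ore graph $G'$ or $G''$ (which has $t\ge 2$ after the split by induction), and check that no triangles are lost in transferring back to $G$. The third statement, $t(G-z)\ge 2$ for $G\neq K_4$, I would likewise obtain by locating $z$ in the decomposition: if $z\notin\{x,y\}$ then $G-z$ still contains one of $G[A]$, $G[B]$ essentially whole and the other with $z$ removed, and at least one side retains $\ge 2$ triangles (using that a side that is $K_4$ with a vertex removed keeps $1$ triangle and the other side contributes $\ge1$ more, or one side is a bigger $4$-Ore graph minus a vertex with $\ge 2$ triangles by induction); if $z\in\{x,y\}$, say $z=x$, then $G-x$ contains $G[B]-x$ which after the identification-inverse still carries the triangles of $G''$ not through $x{*}y$, giving $\ge t(G'')-2\ge 2$, unless $G''=K_4$, in which case a direct check of the small cases finishes it. The main obstacle throughout is bookkeeping: carefully tracking which triangles survive each of the three operations (split, vertex deletion, Ore identification) and correctly applying the "at most two triangles per edge" bound from Claim~\ref{g'} at the glue vertices, together with dispatching the handful of base cases where one of $G'$, $G''$ equals $K_4$.
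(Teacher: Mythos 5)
Your overall skeleton matches the paper's: induction on $|V(G)|$ via the decomposition of Claim~\ref{f1}, the identity $t(G)=t(G[A])+t(G[B])$, and the bound $t(G[A])=t(G')-a\ge 4-2=2$ via Claim~\ref{g'}; your treatment of $t(G-z)$ by locating $z$ in the decomposition is also essentially the paper's. But there is a genuine gap on the $B$-side. You bound the number of triangles of $G''$ through the vertex $x*y$ by $2$ ``by a Claim~\ref{g'}-type bound applied to $G''$ at the edge(s) at $x*y$.'' Claim~\ref{g'} is a per-\emph{edge} bound, not a per-\emph{vertex} bound: already in $K_4$ every vertex lies in three triangles, and in larger $4$-Ore graphs a vertex can likewise lie in three or more triangles (this is exactly the troublesome case the paper must handle separately in the proofs of Claims~\ref{two} and~\ref{three}). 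So ``$t(G[B])\ge t(G'')-2$'' is not justified as stated. Moreover, a triangle of $G''$ through $x*y$ does not arise from a path $xw'y$ in $G[B]$ --- such paths cannot exist at all, since $x$ and $y$ have no common neighbor in $B$; a triangle $\{x*y,u,v\}$ of $G''$ comes either from a triangle $xuv$ or $yuv$ of $G[B]$, or from an edge $uv$ with $u$ attached to $x$ and $v$ attached to $y$, and only the last kind is destroyed by the splitting.

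The fix is precisely the reason the claim bundles the splitting statement into the induction: $G[B]$ is obtained from $G''$ by splitting $x*y$, so the inductive hypothesis (the second statement, applied to the smaller $4$-Ore graph $G''$) gives $t(G[B])\ge 2$ directly. That second statement is in turn a one-liner: any split $G_z$ contains $G-z$ as a subgraph, whence $t(G_z)\ge t(G-z)\ge 2$ for $G\ne K_4$, with $K_4$ checked by hand. Your alternative plan for the splitting statement --- ``reduce to a split of the smaller $4$-Ore graph $G'$ or $G''$ and check that no triangles are lost in transferring back'' --- runs into exactly the transfer problem you anticipate: passing from $G$ to $G'$ reintroduces the edge $xy$ together with up to two triangles that are absent from $G$, so the count degrades by $2$ and yields only $t(G_z)\ge 0$. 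Replace both of these steps by the subgraph observation $G_z\supseteq G-z$ and the inductive appeal to the splitting/deletion statements, and the rest of your argument goes through.
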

\begin{proof}
Let $G$ be a $4$-Ore graph.  We proceed by induction and assume that the claim holds
for all graphs with less than $|V(G)|$ vertices.  Since the claim holds for $K_4$, we can assume that $G\neq K_4$.
Let  $\{x, y\}$, $A$, $B$, $G'$ and $G''$ be as in Claim~\ref{f1}.
By induction hypothesis and Claim~\ref{g'}, $G[A]=G'-xy$ has at
least  $t(G')-2\ge 2$ triangles.  Furthermore, $G[B]$ is obtained from $G''$ by splitting a vertex, and thus it
has at least two triangles by induction.  It follows that $G$ has at least four triangles.

Consider now a vertex $z$ of $G$.  If $z\not\in \{x,y\}$, then $G-z$ contains $G[A]$ or $G[B]$ as a subgraph,
and thus $t(G-z)\ge 2$.  If $z\in \{x,y\}$, say $z=x$, then $G-z$ contains $G'-x$ and $G''-x*y$
as vertex-disjoint subgraphs, and by induction each of them has a triangle; hence, $t(G-z)\ge 2$.
Finally, any graph $G_z$ obtained from $G$ by splitting $z$ contains $G-z$ as a subgraph, and thus $t(G_z)\ge t(G-z)\ge 2$.
\end{proof}

\begin{claim} \label{two}
If $G$ is a $4$-Ore graph with $t(G)\geq 5$, then 
$t(G-u-v)\geq 1$ for each $u,v\in V(G)$.
\end{claim}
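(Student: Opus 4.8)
Throughout, write $\tau_H(w)$ for the number of triangles of a graph $H$ that contain the vertex $w$.

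The plan is to prove the statement by induction on $|V(G)|$. Since $t(K_4)=4<5$ the base case is vacuous, so we may assume $G\neq K_4$ and fix $\{x,y\}$, $A$, $B$, $G'=G[A]+xy$ and $G''$ (obtained from $G[B]$ by identifying $x$ and $y$ into $x*y$) as in Claim~\ref{f1}; both $G'$ and $G''$ are $4$-Ore and have fewer vertices than $G$. Since $x,y$ are non-adjacent with no common neighbour in $B$, no triangle of $G$ meets both $A\setminus\{x,y\}$ and $B\setminus\{x,y\}$, so $t(G)=t(G[A])+t(G[B])=:a+b$, and $a,b\ge2$ by Claims~\ref{g'} and~\ref{g''}, whence $\max(a,b)\ge3$. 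Using Claim~\ref{g''} one checks that $t(G-u-v)\ge1$ whenever $\{u,v\}$ is not of the form ``one vertex of $A\setminus\{x,y\}$ and one of $B\setminus\{x,y\}$'': in each such position $G-u-v$ contains one of $G[A]$, $G[B]$, $G-u$, $G'-u$, $G''-(x*y)$ (note $G''-(x*y)=G[B]-x-y$), all of which have a triangle. So it remains to handle $u\in A\setminus\{x,y\}$, $v\in B\setminus\{x,y\}$; there no triangle of $G-u-v$ straddles, so $t(G-u-v)=t(G[A]-u)+t(G[B]-v)$.

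For this final case it suffices, since $\max(a,b)\ge3$, to show that $a\ge3$ forces $t(G[A]-u)\ge1$ and, symmetrically, that $b\ge3$ forces $t(G[B]-v)\ge1$. Assume $a\ge3$. If $t(G')\ge5$, the induction hypothesis applied to $G'$ and the pair $\{u,x\}$ gives $t(G'-u-x)\ge1$, and $G'-u-x=G[A]-u-x\subseteq G[A]-u$. If $t(G')=4$, then $G'\neq K_4$ (otherwise $a=t(K_4-xy)=2$), so by the Lemma below $\tau_{G'}(u)\le2$; as $G[A]=G'-xy\subseteq G'$ this gives $\tau_{G[A]}(u)\le2$, hence $t(G[A]-u)=a-\tau_{G[A]}(u)\ge1$. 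The case $b\ge3$ is handled the same way with $G''$ in place of $G'$ and the pair $\{v,\,x*y\}$, using that $G''-v-(x*y)=G[B]-v-x-y\subseteq G[B]-v$, that $G''=K_4$ would make $G[B]$ (obtained by un-identifying $x*y$) have exactly two triangles and so contradict $b\ge3$, and that identifying $x$ with $y$ maps the triangles of $G[B]$ through $v$ injectively to triangles of $G''$ through $v$ (so $\tau_{G[B]}(v)\le\tau_{G''}(v)\le2$).

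It remains to prove the \emph{Lemma: if $H$ is a $4$-Ore graph with $H\neq K_4$ and $t(H)=4$, then $\tau_H(w)\le2$ for every $w\in V(H)$.} Decompose $H$ by Claim~\ref{f1} as $\{x,y\}$, $A$, $B$, $H'=H[A]+xy$ and $H''$. As above $t(H[A])+t(H[B])=4$ with both summands $\ge2$, so $t(H[A])=t(H[B])=2$, and any $w\notin\{x,y\}$ satisfies $\tau_H(w)\le2$. So take $w=x$, where $\tau_H(x)=\tau_{H[A]}(x)+\tau_{H[B]}(x)$; I would bound each summand by $1$. For the $A$-side, if $r$ is the number of common neighbours of $x,y$ in $A$, then $t(H')=t(H[A])+r=2+r$, which by Claims~\ref{g'} and~\ref{g''} forces $r=2$ and $t(H')=4$; if $H'=K_4$ then $\tau_{H[A]}(x)=\tau_{K_4-xy}(x)=1$, and if $H'\neq K_4$ then $\tau_{H'}(x)=t(H')-t(H'-x)\le4-2=2$ (Claim~\ref{g''}), so $\tau_{H[A]}(x)=\tau_{H'}(x)-r\le0$. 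For the $B$-side, the absence of a common neighbour of $x,y$ in $B$ makes $N_{H[B]}(x)$ and $N_{H[B]}(y)$ disjoint, so identification yields $\tau_{H''}(x*y)=\tau_{H[B]}(x)+\tau_{H[B]}(y)+\ell$ and $t(H'')=2+\ell$, where $\ell$ is the number of edges of $H[B]$ between $N_{H[B]}(x)$ and $N_{H[B]}(y)$; if $H''=K_4$ then $\ell=2$ and $\tau_{K_4}(x*y)=3$, so $\tau_{H[B]}(x)+\tau_{H[B]}(y)=1$, and if $H''\neq K_4$ then $\tau_{H''}(x*y)=t(H'')-t(H''-(x*y))\le(2+\ell)-2=\ell$, forcing $\tau_{H[B]}(x)=\tau_{H[B]}(y)=0$. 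In every case $\tau_H(x)\le1+1=2$, proving the Lemma.

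The step I expect to require the most care is the bookkeeping for the vertex-identification operation $G[B]\mapsto G''$ (and $H[B]\mapsto H''$): establishing the identity $\tau_{G''}(x*y)=\tau_{G[B]}(x)+\tau_{G[B]}(y)+\ell$, and the injectivity of the map sending a triangle of $G[B]$ through a fixed vertex to the corresponding triangle of $G''$. Both rest on the Claim~\ref{f1} property that no vertex of $G[B]$ is adjacent to both $x$ and $y$; I would record that observation once at the outset and reuse it. Everything else is a mechanical split according to whether a side graph has exactly $4$ or at least $5$ triangles, and whether or not it equals $K_4$.
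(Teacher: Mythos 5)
Your proof is correct and follows the paper's skeleton closely: the same induction on $|V(G)|$ via the Claim~\ref{f1} decomposition, the same disposal of the non-straddling cases (one small slip: ``$G-u$'' cannot be a subgraph of $G-u-v$; you mean $G[B]-u$ or the like, and all these cases are indeed covered by $G[A]$, $G[B]$, $G'-x$, $G'-y$ and $G''-x*y$ together with Claim~\ref{g''}), and the same appeal to the induction hypothesis on the pair $\{r,x\}$ or $\{r,x*y\}$ in the side graph $G'_0$ with at least three triangles. Where you genuinely diverge is the remaining subcase. The paper's dichotomy is on the number of triangles of $G_0$ through the relevant vertex: if it is at most two the conclusion is immediate from $t(G_0)\ge 3$, and if it is at least three then Claim~\ref{g''} forces $t(G'_0)\ge t(G'_0-r)+3\ge 5$, so induction applies after all — no structural information about $4,4$-Ore graphs is ever needed. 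Your dichotomy is instead on $t(G'_0)=4$ versus $t(G'_0)\ge 5$, and the value $4$ obliges you to prove a standalone lemma that every vertex of a $4,4$-Ore graph other than $K_4$ lies in at most two triangles. That lemma is correct as you argue it (the two counting identities for $\tau_{H'}(x)$ and for $\tau_{H''}(x*y)$ both rest, as you note, on $x$ and $y$ being non-adjacent with no common neighbour in $B$, which makes all the triangle maps injective), and it is a genuinely useful structural fact: it strengthens, to arbitrary vertices, the final assertion of the paper's Theorem~\ref{four} about $x*y$. The trade-off is that your route costs an extra page of case analysis which the paper's choice of dichotomy renders unnecessary, while buying a reusable statement in place of an ad hoc counting step.
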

\begin{proof}
We proceed by induction and assume that the claim holds for all graphs with less than $|V(G)|$ vertices.
Note that $G\neq K_4$, since $t(G)\geq 5$.  Let  $\{x, y\}$, $A$, $B$, $G'$ and $G''$ be as in Claim~\ref{f1}.
If $u,v\in A$, then $G''-x*y$ is a subgraph of $G-u-v$ and $t(G-u-v)\ge t(G''-x*y)\ge 1$ by Claim~\ref{g''}.
Hence, by symmetry we can assume that $u\in B\setminus \{x,y\}$.
Suppose that $v\in B$.  We can assume that $v\neq x$, and thus $G'-y$ is a subgraph of $G-u-v$.
Again, Claim~\ref{g''} implies that $t(G-u-v)\ge t(G'-y)\ge 1$.

Finally, consider the case that $u\in B\setminus\{x,y\}$ and $v\in A\setminus\{x,y\}$. Since $t(G) \geq 5$, either $G[A]$ or $G[B]$
contains at least three triangles.  In the former case, let $G_0=G[A]$, $G_0'=G'$, $r=v$ and $s=x$.  In the latter case,
let $G_0=G[B]$, $G'_0=G''$, $r=u$ and $s=x*y$.  If $r$ is contained in at most two
triangles in $G_0$, then $t(G-u-v)\ge t(G_0-r)\ge t(G_0)-2\ge 1$; hence, assume that $r$ is contained in at least $3$ triangles in $G_0$,
and thus also in $G'_0$. Note that $G'_0\neq K_4$, since $t(G_0)\ge 3$.  By Claim~\ref{g''}, we have $t(G'_0)\ge t(G'_0-r)+3\ge 5$.
Note that $G'_0-r-s$ is a subgraph of $G-u-v$, and thus $t(G-u-v)\ge t(G'_0-r-s)\ge 1$ by induction.
\end{proof}

\begin{claim} \label{three}
If $G$ is a $4$-Ore graph with $t(G)\geq 5$, then 
$t(G-v)\geq 3$ for each $v\in V(G)$.
\end{claim}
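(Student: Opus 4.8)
The plan is to argue by induction on $|V(G)|$, in the spirit of Claims~\ref{g''}--\ref{two}. Since $t(K_4)=4$, we may assume $G\neq K_4$ and apply Claim~\ref{f1} to obtain $\{x,y\}$, $A$, $B$, $G'$, $G''$; note $|V(G')|,|V(G'')|<|V(G)|$ because $G',G''$ are $4$-Ore, hence have at least four vertices. I will use the following standing facts. Since $x\not\sim y$ and no edge joins $A\setminus\{x,y\}$ to $B\setminus\{x,y\}$, every triangle of $G$ lies inside $G[A]$ or inside $G[B]$; hence $t(G-v)=t(G[A]-v)+t(G[B]-v)$ for every vertex $v$, and $t(G)=t(G[A])+t(G[B])\ge 5$. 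By Claims~\ref{g'} and~\ref{g''}, $t(G[A])\ge t(G')-2\ge 2$, and $t(G[B])\ge 2$ since $G[B]$ is a split of $G''$. Because $x,y$ have no common neighbor in $B$, the map sending a triangle of $G[B]$ to its image in $G''$ is injective, so a vertex $w\notin\{x,y\}$ lying in $k$ triangles of $G[B]$ lies in at least $k$ triangles of $G''$. Finally, splitting a vertex of $K_4$ always leaves exactly two triangles, so $G''=K_4$ forces $t(G[B])=2$ and $G'=K_4$ forces $t(G[A])=t(K_4-xy)=2$; as $t(G)\ge 5$, at least one of $G',G''$ is not $K_4$, and at least one of $t(G[A]),t(G[B])$ is $\ge 3$.

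Fix $v$ and split into three cases. If $v\in\{x,y\}$, say $v=x$: then $G[A]-x=G'-x$ and $G[B\setminus\{x,y\}]=G''-(x*y)$ is a subgraph of $G[B]-x$, so Claim~\ref{g''} gives $t(G'-x)\ge 1$ and $t(G[B]-x)\ge t(G''-(x*y))\ge 1$, and whichever of $G',G''$ is not $K_4$ raises the corresponding term to $\ge 2$; hence $t(G-x)=t(G'-x)+t(G[B]-x)\ge 3$. If $v\in A\setminus\{x,y\}$: then $t(G[B]-v)=t(G[B])$, so we are done when $t(G[B])\ge 3$; otherwise $t(G[B])=2$, $t(G[A])\ge 3$, and it suffices to show $t(G[A]-v)\ge 1$. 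If $v$ is in at most two triangles of $G[A]$ this is clear; otherwise $v$ is in at least three triangles of $G[A]\subseteq G'$, so $G'\neq K_4$ and Claim~\ref{g''} gives $t(G')\ge t(G'-v)+3\ge 5$, whence the induction hypothesis yields $t(G'-v)\ge 3$ --- but every triangle of $G'-v$ is either a triangle of $G[A]-v$ (there are none, since all triangles of $G[A]$ pass through $v$) or uses the edge $xy$ (at most two of those), a contradiction. The case $v\in B\setminus\{x,y\}$ is analogous with $G''$ in place of $G'$: if $t(G[A])\ge 3$ we are done, and otherwise it suffices to show $t(G[B]-v)\ge 1$; if $v$ is in at least three triangles of $G[B]$ then it lies in at least three triangles of $G''$ by injectivity of the triangle map, so $G''\neq K_4$, $t(G'')\ge 5$ by Claim~\ref{g''}, and then Claim~\ref{two} applied to $G''$ gives $t\bigl(G''-(x*y)-v\bigr)\ge 1$, contradicting that $G''-(x*y)-v=G[B\setminus\{x,y,v\}]$ contains no triangle (all triangles of $G[B]$ contain $v$).

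The main obstacle is the last case: when $v$ lies on the heavy side it may be contained in arbitrarily many triangles, so the crude estimate ``removing a vertex destroys at most two triangles'' fails. The remedy is to bootstrap --- to show that the $4$-Ore graph $G'$ (resp.\ $G''$) sitting on that side in fact has at least five triangles, which unlocks the induction hypothesis (Claim~\ref{three} itself, or Claim~\ref{two}). Making this precise depends on controlling exactly how triangles are destroyed when $x*y$ is split (only triangles through $x*y$ are affected, and the triangle map into $G''$ is injective because $x,y$ have no common neighbor in $B$) and on disposing of the degenerate sub-cases $G'=K_4$ or $G''=K_4$, which are pinned down by the identity that splitting a vertex of $K_4$ leaves precisely two triangles.
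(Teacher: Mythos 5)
Your proof is correct and follows essentially the same route as the paper's: decompose via Claim~\ref{f1}, settle $v\in\{x,y\}$ by adding the Claim~\ref{g''} bounds for $G'-v$ and $G''-x*y$ (one of which is not $K_4$), and in the hard sub-case where $v$ lies in at least three triangles of the heavy side, bootstrap to $t\ge 5$ there and then delete two vertices. The only deviations are cosmetic: on the $A$-side you invoke the inductive hypothesis of the claim itself ($t(G'-v)\ge 3$, minus at most two triangles through $xy$) where the paper simply applies Claim~\ref{two} to $G'$, and your two reductio steps assert ``all triangles of $G_0$ pass through $v$'' without first stating the hypothesis $t(G_0-v)=0$ being refuted --- harmless, since the direct inequalities $t(G[A]-v)\ge t(G'-v)-2$ and $t(G[B]-v)\ge t(G''-x*y-v)$ give the conclusion without any contradiction argument.
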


\begin{proof}
Note that $G\neq K_4$, since $t(G)\geq 5$.  Let  $\{x, y\}$, $A$, $B$, $G'$ and $G''$ be as in Claim~\ref{f1}.
Since $t(G) \geq 5$, either $G[A]$ or $G[B]$ contains at least three triangles.  In the former case,
let $G_0=G[A]$, $G_1=G[B]$, $G_0'=G'$ and $s=x$.  In the latter case, let $G_0=G[B]$, $G_1=G[A]$, $G'_0=G''$ and $s=x*y$.
Since $G_0$ has at least three triangles, it follows that $G_0'\neq K_4$.  
If $v\not\in V(G_0)$, then $t(G-v)\ge t(G_0)\ge 3$.  Therefore, assume that $v\in V(G_0)$.
If $v\in\{x,y\}$, then $G'-v$ and $G''-x*y$
are vertex-disjoint subgraphs of $G-v$, and since at least one of $G'$ and $G''$ is not equal to $K_4$, we have
$t(G-v)\ge t(G'-v)+t(G''-x*y)\ge 3$ by Claim~\ref{g''}.

Finally, suppose that $v\not\in\{x,y\}$, and thus $G_1$ is a subgraph of $G-v$.  
By Claim~\ref{g''}, there are at least two triangles in $G_1$.  We claim that $G_0-v$ contains a triangle.
This is clear if $v$ belongs to at most two triangles in $G_0$, since $t(G_0)\ge 3$.  Otherwise, $v$ belongs to at least three triangles in $G_0$, and thus also
in $G'_0$.  By Claim~\ref{g''}, we have $t(G_0')\ge t(G'_0-v)+3\ge 5$.  We conclude that
$t(G_0-v)\ge t(G_0'-v-s)\ge 1$ by Claim~\ref{two}.  Therefore, $t(G-v)=t(G_0-v)+t(G_1)\ge 3$.
\end{proof}

A \emph{$4,4$-Ore graph} is a $4$-Ore graph with exactly $4$ triangles.
The main result of this section is the following.

\begin{theorem} \label{four}
Suppose $G$ is a $4,4$-Ore graph distinct from $K_4$.
Let $\{x, y\}$, $A$, $B$, $G'$ and $G''$ be as in Claim~\ref{f1}.
Then both $G'$ and $G''$ are $4,4$-Ore graphs,
$xy$ is a diamond edge of $G'$, and $t(G[B])=2$.  Furthermore,
if $G''\neq K_4$, then $x*y$ belongs to exactly two triangles of $G''$.
\end{theorem}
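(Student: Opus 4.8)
The plan is to count the triangles of $G$ on the two sides of the separation $\{x,y\}$ coming from Claim~\ref{f1}, and to play the resulting bounds against Claims~\ref{g'}, \ref{g''} and \ref{three}. First I would note that the decomposition partitions the triangles of $G$: since $x$ and $y$ are non-adjacent in $G$ and no edge joins $A\setminus\{x,y\}$ to $B\setminus\{x,y\}$, every triangle of $G$ lies entirely in $G[A]$ or entirely in $G[B]$, so $t(G)=t(G[A])+t(G[B])=4$. Now $G[A]=G'-xy$ with $G'$ a $4$-Ore graph, so $t(G')\ge 4$ by Claim~\ref{g''} and $xy$ lies in at most two triangles of $G'$ by Claim~\ref{g'}; hence $t(G[A])\ge t(G')-2\ge 2$. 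On the other side $G[B]$ is obtained from the $4$-Ore graph $G''$ by splitting $x*y$, so $t(G[B])\ge 2$ by Claim~\ref{g''}. Together with $t(G[A])+t(G[B])=4$ this forces $t(G[A])=t(G[B])=2$; in particular $t(G[B])=2$, one of the asserted statements.

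Next I would pin down $G'$ and $G''$. From $2=t(G[A])=t(G'-xy)\ge t(G')-2$ we get $t(G')\le 4$, so $t(G')=4$ and $G'$ is a $4,4$-Ore graph; moreover $xy$ then lies in exactly $t(G')-t(G'-xy)=2$ triangles of $G'$, i.e.\ it is a diamond edge of $G'$. For $G''$ the key observation is that $G''-x*y$ is precisely the induced subgraph $G[B\setminus\{x,y\}]$, and hence $t(G''-x*y)\le t(G[B])=2$. If $G''$ had at least five triangles, then Claim~\ref{three} would give $t(G''-x*y)\ge 3$, a contradiction; since $G''$ is $4$-Ore we also have $t(G'')\ge 4$, so $t(G'')=4$ and $G''$ is a $4,4$-Ore graph.

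For the last assertion, assume $G''\ne K_4$ and let $m$ be the number of triangles of $G''$ through $x*y$. Every triangle of $G''$ avoiding $x*y$ is also a triangle of $G[B]$, and distinct such triangles remain distinct, so $2=t(G[B])\ge t(G'')-m=4-m$, giving $m\ge 2$. Conversely $t(G'')=m+t(G''-x*y)$, and $t(G''-x*y)\ge 2$ by Claim~\ref{g''} (this is exactly where $G''\ne K_4$ enters), so $m\le 4-2=2$. Hence $m=2$.

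The step I expect to be the main obstacle is the upper bound $t(G'')\le 4$: there is no analogue of Claim~\ref{g'} bounding the number of triangles through a \emph{vertex}, so the trick is to recognize $G''-x*y$ as the already-understood graph $G[B\setminus\{x,y\}]$ and feed the bound $t(G[B])=2$ into Claim~\ref{three}. The $K_4$ exception in the final sentence is the other delicate point, and it is the reason the dichotomy in Claim~\ref{g''} (rather than a uniform ``$t(G''-x*y)\ge 2$'') is needed: if $G''=K_4$ then $x*y$ in fact lies in three triangles and $t(K_4-x*y)=1$, so the bound $m\le 2$ genuinely fails there.
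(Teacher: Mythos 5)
Your proof is correct and follows essentially the same route as the paper's: partition the four triangles between $G[A]$ and $G[B]$, force $t(G[A])=t(G[B])=2$, use Claim~\ref{three} to cap $t(G'')$ at $4$, and use the dichotomy in Claim~\ref{g''} for the final count of triangles through $x*y$. The only (harmless) deviation is that you bound $t(G')\le 4$ directly via Claim~\ref{g'} (deleting $xy$ kills at most two triangles), where the paper instead invokes Claim~\ref{three} on $G'-x$; both work.
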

\begin{proof}
Note that $t(G[A])\ge 2$ and $t(G[B])\ge 2$ by Claims~\ref{g'} and~\ref{g''},
and since $t(G[A])+t(G[B])=t(G)=4$, it follows that $t(G[A])=t(G[B])=2$.
If $t(G')\ge 5$, then we would have $t(G[A])\ge t(G'-x)\ge 3$ by Claim~\ref{three}.
If $t(G'')\ge 5$, then we would have $t(G[B])\ge t(G''-x*y)\ge 3$ by Claim~\ref{three}.
It follows that $t(G')\le 4$ and $t(G'')\le 4$, and by
Claim~\ref{g''}, we conclude that $t(G')=t(G'')=4$, i.e., both $G'$ and $G''$
are $4,4$-Ore graphs.

Since $xy$ belongs to $t(G')-t(G[A])=2$ triangles in $G'$, it is a diamond edge.
Similarly, $x*y$ belongs to at least $t(G'')-t(G[B])=2$ triangles in $G''$.
Furthermore, if $G''\neq K_4$ and $x*y$ belonged to at least three triangles,
then we would have $t(G'')\ge t(G''-x*y)+3\ge 5$ by Claim~\ref{g''}, which is a contradiction.
\end{proof}

\section{A description of $4,4$-Ore graphs and \plfnof-graphs}\label{sec-nof}

With Theorem~\ref{four}, it is easy to characterize all $4,4$-Ore graphs.
The \emph{Moser spindle} is the Ore composition of two $K_4$'s, depicted in Figure~\ref{fig-thomaswalls} as $M$.

\begin{lemma}\label{lemma-44ore}
Every $4,4$-Ore graph belongs to $\twg\cup\twg_1\cup \twg_2$.
\end{lemma}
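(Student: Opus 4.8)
The plan is to argue by induction on $|V(G)|$. The base case $G=K_4$ is clear since $K_4\in\twg$, so assume $G\neq K_4$ and apply Theorem~\ref{four}: it produces the separating pair $\{x,y\}$, the sets $A$ and $B$, and graphs $G'$ and $G''$. Since $\{x,y\}$ separates $G$, both $A\setminus\{x,y\}$ and $B\setminus\{x,y\}$ are nonempty, so $G'$ and $G''$ are $4,4$-Ore graphs on strictly fewer vertices than $G$ and hence lie in $\twg\cup\twg_1\cup\twg_2$ by the induction hypothesis; moreover $xy$ is a diamond edge of $G'$, $t(G[B])=2$, and either $G''=K_4$ or $x*y$ lies in exactly two triangles of $G''$. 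By Claim~\ref{f1}, $G$ is exactly the Ore-composition that deletes $xy$ from $G'$ and splits $x*y$ in $G''$, and the task is to check that this does not leave $\twg\cup\twg_1\cup\twg_2$.

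The first ingredient I would establish is a catalogue of the local structure of the three families, read off from the fact that $\twg$ consists of the graphs obtained from $K_4$ by diamond expansions: a graph in $\twg$ has at least two diamond edges (exactly two, and vertex-disjoint, unless it is $K_4$, which has six), a graph in $\twg_1$ has exactly one, and a graph in $\twg_2$ has none; the key point is that $H_0$ contains no diamond edge and that substituting $H_0$ for a diamond edge of a $\twg$-graph creates neither a new diamond edge nor a new triangle crossing the two attachment points. The same bookkeeping identifies which vertices lie in exactly two triangles and recovers, from any graph in $\twg\cup\twg_1\cup\twg_2$, a ``skeleton'' in $\twg$ together with the diamond edges that were replaced by $H_0$. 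In particular, since $xy$ is a diamond edge of $G'$, we already know $G'\in\twg\cup\twg_1$.

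If $G''=K_4$ then $G$ is, by definition, a diamond expansion of $G'$ along $xy$: this keeps $G$ in $\twg$ when $G'\in\twg$, and when $G'\in\twg_1$ the edge $xy$ is the unique diamond edge of $G'$, it is a diamond edge of the skeleton $W$ of $G'$ and is vertex-disjoint from the edge of $W$ carrying $H_0$, so the expansion and the $H_0$-substitution act on disjoint parts and commute, whence $G\in\twg_1$. If $G''\neq K_4$, note first that identifying $x$ and $y$ in $G[B]$ yields $G''$, which is $4$-critical and hence not $3$-colorable; therefore every $3$-coloring of $G[B]$ separates $x$ and $y$, so $G[B]$ is a quasi-edge and $G$ is $G'$ with the diamond edge $xy$ replaced by $G[B]$. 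Moreover $x*y$ lies in exactly two triangles of $G''$, and since splitting a vertex creates no triangle and destroys no triangle avoiding it, $t(G[B])=2=t(G'')-2$ forces the split to destroy both triangles through $x*y$. Using the structural catalogue one then analyzes $G[B]$ directly: in the smallest case $G''$ is the Moser spindle and $G[B]$ is (isomorphic to) $H_0$, so that $G$ is $G'\in\twg\cup\twg_1$ with an $H_0$ substituted at $xy$ and hence $G\in\twg_1\cup\twg_2$; in general, removing $x*y$ ``opens'' a terminal part of the diamond-expansion structure of $G''$, and one checks that fusing this with the structure of $G'$ produces a diamond-expansion structure carrying the at most one $H_0$ of $G'$ (which is disjoint from $xy$) together with the $H_0$'s inherited from $G''$, so again $G\in\twg\cup\twg_1\cup\twg_2$.

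This last analysis — deciding exactly which two-triangle vertices $x*y$ and which ``both-destroying'' splits occur, and tracking how the structures of $G'$ and $G''$ fuse — is the main obstacle. One must in particular verify that the fused structure never has a diamond edge lying inside an $H_0$ and never needs three or more vertex-disjoint copies of $H_0$ (the latter being automatic, since three such copies already contribute six triangles whereas $t(G)=4$), and that the condition from Claim~\ref{f1} that $x$ and $y$ have no common neighbour in $B$ is respected throughout. A convenient way to organize the bookkeeping when $G''\neq K_4$ may be to re-associate the Ore-composition using Theorem~\ref{four} applied to $G''$ as well, reducing to a smaller instance; the degenerate cases $G'=K_4$ and $G''=$ the Moser spindle are then checked directly and are routine.
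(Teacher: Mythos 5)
Your proposal follows essentially the same route as the paper: induction on $|V(G)|$ using Theorem~\ref{four}, followed by an inspection of how the vertex $x*y$ (which lies in exactly two triangles of $G''\in\twg\cup\twg_1$) can be split so that both of its triangles are destroyed, yielding exactly the paper's two cases --- the Moser spindle with $x*y$ its degree-four vertex, so that $G[B]\cong H_0$ and the composition is an $H_0$-substitution, versus $x*y$ an endpoint of a diamond edge, so that the composition extends the diamond chain. The step you defer as the ``main obstacle'' is precisely the finite inspection the paper also carries out (stating it only slightly more explicitly, by pinning down that in the second case $x$ has degree two and $y$ degree one in $G[B]$, with the neighbor of $y$ completing a diamond edge at $x*y$), so there is no substantive divergence.
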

\begin{proof}
Let $G$ be a $4,4$-Ore graph.  We proceed by induction and assume that every $4,4$-Ore graph with less than
$|V(G)|$ vertices belongs to $\twg\cup\twg_1\cup \twg_2$.  Note that $K_4\in \twg$, and thus we can
assume that $G\neq K_4$.  Let $\{x, y\}$, $A$, $B$, $G'$ and $G''$ be as in Theorem~\ref{four}.
By induction hypothesis, we have $G',G''\in \twg\cup\twg_1\cup \twg_2$.
Since $G'$ has a diamond edge $xy$ and $G''$ has a vertex $x*y$ belonging to at least two triangles,
we conclude that $G',G''\not\in \twg_2$.  Since $G[B]$ has exactly two triangles,
an inspection of the graphs in $\twg\cup \twg_1$ (see Figures~\ref{fig-thomaswalls} and
\ref{fig-tw1}) shows that either
\begin{itemize}
\item $G''$ is the Moser spindle, $x*y$ is its vertex of degree four, $G[B]$ is the Havel's quasiedge $H_0$
and $x$ and $y$ are its vertices of degree two, or
\item $x$ has degree two in $G[B]$, $y$ has degree one in $G[B]$ and $x*y$ is incident with a diamond edge $(x*y)z$ of $G''$,
where $z$ is the neighbor of $y$ in $G[B]$.
\end{itemize}
In the former case, $G$ is obtained from $G_1$ by replacing a diamond edge with the Havel's quasiedge $H_0$,
and thus $G\in \twg_1\cup \twg_2$.  In the latter case, observe that the described Ore-composition of graphs
from $\twg\cup \twg_1$ results in a graph from $\twg\cup\twg_1\cup \twg_2$.
\end{proof}

We can now describe \plfnof-graphs.
\begin{proof}[Proof of Theorems~\ref{thm-tw} and~\ref{thm-or}]
By Claim~\ref{g''}, every $4$-Ore graph has at least four triangles.  Therefore, it suffices to prove
that the following classes are equal to each other:
\begin{itemize}
\item $4,4$-Ore graphs,
\item $\twg\cup\twg_1\cup\twg_2$, and
\item \plfnof-graphs.
\end{itemize}

By Lemma~\ref{lemma-44ore}, every $4,4$-Ore graph belongs to $\twg\cup\twg_1\cup\twg_2$.  Every graph
in $\twg\cup\twg_1\cup\twg_2$ is planar, $4$-critical (since it is $4$-Ore) and has four triangles, and thus it is a \plfnof-graph.
Therefore, it suffices to show that every \plfnof-graph $G$ is $4,4$-Ore.  Since $G$ has four triangles,
we only need to prove that $G$ is $4$-Ore.  Consider a plane drawing of $G$ without $4$-faces.  Let $e$, $n$ and $s$
be the number of edges, vertices and faces of the drawing of $G$, respectively.  Note that
$G$ has at most four triangular faces and all other faces of $G$ have length at least $5$.  Therefore,
$G$ has at least $\frac{1}{2}(5(s-4)+3\cdot 4)$ edges.
Note that $G$ is connected (since it is $4$-critical), and thus $s=e+2-n$ by Euler's formula.
It follows that $e\le (5n-2)/3$, and $G$ is $4$-Ore by Theorem~\ref{thm-edges}.
\end{proof}

\section{A description of \plf-graphs}

We are going to characterize planar $4$-critical graphs with $4$ triangles.
To deal with short separating cycles, we use the notion of criticality with respect to a subgraph.
Let $G$ be a graph and $C$ be its (not necessarily induced) proper subgraph.
We say that $G$ is \emph{$C$-critical (for $3$-coloring)} if for every proper subgraph $H\subset G$
such that $C\subseteq H$, there exists a $3$-coloring of $C$ that extends
to a $3$-coloring of $H$, but not to a $3$-coloring of $G$.

Notice that $4$-critical graphs are exactly $C$-critical graphs with $C=\emptyset$.
Furthermore, it is easy to see that if $F$ is a $4$-critical graph and $F=G\cup G'$, where $C=G\cap G'$,
then either $G=C$ or $G$ is $C$-critical.  We mainly use the following reformulation.

\begin{lemma}[Dvo\v{r}\'ak et al.~\cite{trfree1}]\label{lemma-critin}
Let $G$ be a plane graph and let $\Lambda$ be a connected open region of the plane whose boundary is equal
to a cycle $C$ of $G$, such that $\Lambda$ is not a face of $G$.
Let $H$ be the subgraph of $G$ drawn in the closure of $\Lambda$.
If $G$ is $4$-critical, then $H$ is $C$-critical.
\end{lemma}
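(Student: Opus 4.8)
The plan is to deduce the lemma from the general local-criticality principle recorded just before its statement: if a $4$-critical graph $F$ decomposes as $F=F_1\cup F_2$ with $F_1\cap F_2=C$ and $F_1\neq C$, then $F_1$ is $C$-critical. First I would extract from planarity the topological decomposition that lets this principle apply, and then I would prove the principle itself (it is ``easy to see'', but let me indicate the argument).

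\smallskip
\noindent\emph{Step 1: the topological decomposition.} The cycle $C$ is a Jordan curve, so it bounds two open regions; one of them is $\Lambda$, call the other $\Lambda'$. Every vertex and every edge of the plane graph $G$ is drawn in $\overline{\Lambda}$ or in $\overline{\Lambda'}$, and the only vertices and edges of $G$ whose drawings lie in the common boundary $\overline{\Lambda}\cap\overline{\Lambda'}=C$ are those of the cycle $C$ itself. Hence, setting $G_1:=H$ (the subgraph of $G$ drawn in $\overline{\Lambda}$) and letting $G_2$ be the subgraph of $G$ drawn in $\overline{\Lambda'}$, we obtain $G=G_1\cup G_2$ and $G_1\cap G_2=C$. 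Since $\Lambda$ is not a face, $\Lambda$ contains some vertex or edge of $G$, so $G_1=H$ properly contains $C$; in particular $G_1\neq C$.

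\smallskip
\noindent\emph{Step 2: the criticality principle.} Let $F$ be $4$-critical with $F=F_1\cup F_2$, $F_1\cap F_2=C$, and $F_1\neq C$; I claim $F_1$ is $C$-critical. Fix any subgraph $H'$ with $C\subseteq H'\subsetneq F_1$; I must exhibit a $3$-coloring of $C$ that extends to $H'$ but not to $F_1$. Set $F^{\ast}:=H'\cup F_2$. The bookkeeping I would check is: $H'\cap F_2=C$ (indeed $H'\subseteq F_1$ gives $H'\cap F_2\subseteq F_1\cap F_2=C\subseteq H'$), and $F^{\ast}$ is a \emph{proper} subgraph of $F$ (any vertex or edge witnessing $H'\subsetneq F_1$ lies in neither $H'$ nor $F_2$, since it is not in $F_1\cap F_2=C\subseteq H'$, hence it is missing from $F^{\ast}$). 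As $F$ is $4$-critical, $F^{\ast}$ has a $3$-coloring $\psi$; put $\varphi:=\psi|_C$. Then $\psi|_{H'}$ shows $\varphi$ extends to $H'$. If $\varphi$ also extended to a $3$-coloring $\chi$ of $F_1$, then $\chi$ on $F_1$ and $\psi$ on $F_2$, which agree on $F_1\cap F_2=C$, would combine to a proper $3$-coloring of $F=F_1\cup F_2$, contradicting that $F$ is not $3$-colorable. So $\varphi$ does not extend to $F_1$, and $F_1$ is $C$-critical. Applying this with $F=G$, $F_1=G_1=H$, $F_2=G_2$ yields that $H$ is $C$-critical.

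\smallskip
\noindent The main obstacle here is not computational. The one place that needs genuine care is checking that $F^{\ast}=H'\cup F_2$ is a \emph{proper} subgraph of $F$ --- this is precisely where the hypothesis ``$\Lambda$ is not a face'' (equivalently $F_1\neq C$) is used --- together with making sure the two colorings being glued agree on exactly the common subgraph $C$. Everything else is the Jordan-curve decomposition of Step 1, which is the only genuinely topological ingredient, and the routine identities $F_1\cap F_2=C$ and $H'\cap F_2=C$.
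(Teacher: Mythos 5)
Your proof is correct and follows exactly the route the paper intends: the paper does not prove this lemma itself (it cites it from~\cite{trfree1}) but states immediately beforehand the general principle that if a $4$-critical $F$ decomposes as $F=F_1\cup F_2$ with $F_1\cap F_2=C$ and $F_1\neq C$ then $F_1$ is $C$-critical, calling the lemma a ``reformulation'' of it. Your Step~1 supplies the Jordan-curve decomposition that makes the lemma an instance of that principle, and your Step~2 correctly proves the principle, including the key point that $H'\cup F_2$ is a proper subgraph of $F$.
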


This is useful in connection with the following result of Gimbel and Thomassen~\cite{gimbelthomassen},
which was also obtained independently by Aksenov et al.~\cite{aksenov6cyc}.

\begin{theorem}[Gimbel and Thomassen~\cite{gimbelthomassen}]\label{thm-gimbel}
Let $G$ be a plane triangle-free graph with the outer face bounded by a cycle $C$ of length at most $6$.  If $G$ is $C$-critical,
then $C$ is a $6$-cycle and all internal faces of $G$ have length four.
\end{theorem}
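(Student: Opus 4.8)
The plan is to argue by contradiction, using the definition of $C$-criticality as a license to delete or contract pieces of $G$ without changing which $3$-colorings of $C$ extend, and to push this until $G$ collapses. Since $G$ is triangle-free, $|C|\ge 4$. If $|C|\in\{4,5\}$, then by the classical fact that every $3$-coloring of a facial cycle of length at most five of a triangle-free plane graph extends to a $3$-coloring of the whole graph (a consequence of Gr\"otzsch's theorem~\cite{grotzsch1959,aksenov}), \emph{every} $3$-coloring of $C$ extends to $G$; hence, since $G\ne C$, no proper subgraph $H$ with $C\subseteq H$ can witness $C$-criticality, a contradiction. So $|C|=6$, which is the first half of the statement.

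Next I would record two reductions. \emph{(i)} If $G$ has a cycle $C'$ of length at most five that does not bound a face, delete from $G$ everything drawn strictly inside the closed disk bounded by $C'$ that does not contain the outer face; the resulting graph $H$ is a proper subgraph with $C\subseteq H$, and since every $3$-coloring of $C'$ extends inside $C'$ by the fact just quoted, the precolorings of $C$ extending to $H$ and to $G$ coincide, contradicting $C$-criticality. Hence \emph{every cycle of $G$ of length at most five bounds a face}; in particular $G$ has no separating cycle of length at most five, and every $4$- and $5$-cycle of $G$ is a face. \emph{(ii)} If $G$ has an internal vertex $v$ of degree at most two, then a $3$-coloring of $C$ extends to $G$ if and only if it extends to $G-v$ (two forbidden colors still leave one free), again contradicting $C$-criticality; so \emph{every internal vertex of $G$ has degree at least three}. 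We may also assume $G$ is connected, since a component avoiding $C$ is $3$-colorable by Gr\"otzsch's theorem and can be deleted.

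It remains to prove that when $|C|=6$ every internal face has length $4$, and this is where the real work lies. I would take a counterexample $G$ with $|V(G)|$ minimum and combine Euler's formula in the form $\sum_v(\deg v-4)+\sum_f(\ell(f)-4)=-8$ with (i), (ii), and a discharging/reduction argument. The idea is that, given the absence of short separating cycles and of internal $\le 2$-vertices, an internal face of length $\ge 5$ must always expose a reduction: either a separating $6$-cycle, or two vertices on the long face whose identification keeps $G$ plane and triangle-free, or an internal $3$-vertex incident with the long face whose deletion loses no precoloring constraint. In the first case one cuts along the $6$-cycle and resolves the inside (which is critical with respect to the $6$-cycle by an analogue of Lemma~\ref{lemma-critin}, hence already has only $4$-faces by minimality) and the outside separately; in the other two cases one subdivides the long face into shorter faces or deletes the vertex. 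Each reduction yields a strictly smaller plane triangle-free graph that is still $C$-critical with outer hexagon, contradicting the minimality of $G$.

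The hard part, and the only genuinely non-routine step, is to show that a long internal face always yields such a reduction, and to verify that each reduction (a) preserves triangle-freeness and the outer $6$-cycle and (b) neither loses nor gains non-extendable precolorings of $C$, so that $C$-criticality is maintained. This requires a case analysis of the neighborhoods of the vertices on a longest internal face, using $3$-color Kempe chains to show, for instance, that a degree-$3$ internal vertex on such a face is deletable unless its three neighbors are forced to receive distinct colors in every extension -- a configuration that a slightly enlarged separating cycle then excludes via (i). Everything else -- disposing of $|C|\le 5$ and establishing the reductions (i) and (ii) -- is routine.
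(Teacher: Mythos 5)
The paper does not prove this statement at all: Theorem~\ref{thm-gimbel} is imported as a black box from Gimbel and Thomassen~\cite{gimbelthomassen} (independently Aksenov et al.~\cite{aksenov6cyc}), so there is no in-paper argument to compare yours against. Judged on its own, your proposal has a genuine gap. The parts you do carry out --- ruling out $|C|\le 5$, showing that cycles of length at most five bound faces, and deleting internal vertices of degree at most two --- all rest on the ``classical fact'' that every $3$-coloring of a facial cycle of length at most five in a triangle-free plane graph extends. That fact (Aksenov's extension theorem) is legitimate to cite, but it is precisely the $|C|\le 5$ case of the statement you are proving, so those steps amount to a citation rather than an argument. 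The actual content of Gimbel--Thomassen is the $|C|=6$ case: that a $C$-critical triangle-free plane graph with a hexagonal outer face is a quadrangulation of the hexagon.

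For that case you offer only a plan: Euler's formula in the form $\sum_v(\deg v-4)+\sum_f(\ell(f)-4)=-8$ together with the assertion that ``an internal face of length $\ge 5$ must always expose a reduction.'' That assertion \emph{is} the theorem; you explicitly label it ``the hard part'' and supply neither the case analysis of the neighborhoods along a long face nor the verification that your proposed reductions preserve $C$-criticality. The latter is not automatic: identifying two vertices can create new non-extendable precolorings of $C$, so one must pass to a $C$-critical subgraph of the reduced graph and check that it still violates the conclusion; and deleting an internal $3$-vertex discards the constraint that its three neighbours cannot all receive distinct colours, so such a vertex is not simply ``deletable'' --- one typically identifies two of its neighbours instead and must then re-establish planarity, triangle-freeness, and the outer cycle. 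Until the discharging and the reducibility of each configuration are actually worked out (this occupies several pages in the original sources), the proposal is a roadmap, not a proof.
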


Furthermore, they also exactly characterized the colorings of $C$ that do not extend to $G$.
\begin{theorem}[Gimbel and Thomassen~\cite{gimbelthomassen}]\label{thm-gimbelext}
Let $G$ be a plane graph with the outer face bounded by a cycle $C=c_1\ldots c_6$ of length $6$, such that all other
faces of $G$ have length $4$.  A $3$-coloring $\varphi$ of $G[V(C)]$
does not extend to a $3$-coloring of $G$ if and only if $\varphi(c_1)=\varphi(c_4)$, $\varphi(c_2)=\varphi(c_5)$ and $\varphi(c_3)=\varphi(c_6)$.
\end{theorem}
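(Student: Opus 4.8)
The two directions of this equivalence have rather different flavours, and the plan is to prove the ``if'' direction (the easy obstruction) by a winding‑number computation with $\mathbb{Z}_3$‑tensions, and the ``only if'' direction (the substantive one) via its contrapositive, constructing the extension as a ``height function''. Throughout I identify the colour set with $\mathbb{Z}_3$ and, given any proper $3$‑colouring $\phi$, orient and weight each edge by the \emph{tension} $\tau(\vec{uv})\in\{+1,-1\}$ representing $\phi(v)-\phi(u)\bmod 3$ (well defined since $\phi$ is proper, with $\tau(\vec{vu})=-\tau(\vec{uv})$). For a walk $W=w_0w_1\cdots w_k$ put $\tau(W)=\sum_i\tau(\vec{w_iw_{i+1}})$; then $\tau(W)\equiv\phi(w_k)-\phi(w_0)\pmod 3$ and $\tau(W)\equiv k\pmod 2$. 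A first computation: $\tau(c_1c_2c_3c_4c_5c_6c_1)$ is divisible by $3$, even, and of absolute value at most $6$, hence lies in $\{-6,0,6\}$; a short check shows it equals $\pm 6$ exactly when $\varphi$ satisfies the three antipodal equalities (equivalently, $\varphi(c_i)=\varphi(c_1)+(i-1)$ up to reversal), and $0$ otherwise. I also note that $G$ is connected and bipartite, since all its faces have even length; in particular it is triangle‑free, and more generally no odd closed walk of $G$ bounds a region subdivided into $4$‑faces.

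For the ``if'' direction, suppose $\varphi$ extends to a proper $3$‑colouring $\phi$ of $G$. Each internal face $F$ has boundary walk of length $4$, so $\tau(\partial F)$ is even, divisible by $3$, and of absolute value at most $4$; hence $\tau(\partial F)=0$. Orienting all internal faces consistently and summing, every edge not on $C$ is traversed twice with opposite signs and cancels, so $\tau(c_1\cdots c_6c_1)=\sum_{F}\tau(\partial F)=0$. By the first paragraph this contradicts the antipodal equalities, which force $\tau(c_1\cdots c_6c_1)=\pm6$. Hence no such $\phi$ exists.

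For the ``only if'' direction I prove the contrapositive: if $\varphi$ is not antipodal, it extends. Then $\tau(c_1\cdots c_6c_1)=0$, so the boundary potential $\psi\colon V(C)\to\mathbb{Z}$ given by $\psi(c_1)=0$ and $\psi(c_{i+1})-\psi(c_i)=\tau(\vec{c_ic_{i+1}})$ is well defined (it returns to $0$ after the full loop) and satisfies $\psi(c_i)\equiv\varphi(c_i)-\varphi(c_1)\pmod 3$. The key geometric input is that $d_G(c_i,c_j)$ equals the cyclic distance $d_C(c_i,c_j)$ of $c_i$ and $c_j$ along $C$ for every pair of boundary vertices, with the single exception that a chord $c_ic_{i+3}$ — the only kind of chord $G$ may have, since a shorter chord would make a triangle — makes $d_G(c_i,c_{i+3})=1$; this follows from the parity remark above applied to the subregion cut off by a hypothetical shortcut. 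Combining this with the consistency of $\psi$ around $C$ (which yields both $|\psi(c_i)-\psi(c_j)|\le d_C(c_i,c_j)$ and the parity $\psi(c_i)-\psi(c_j)\equiv d_C(c_i,c_j)\pmod2$) one checks $|\psi(c_i)-\psi(c_j)|\le d_G(c_i,c_j)$ in every case (when a chord $c_ic_{i+3}$ is present the left side is an odd integer at most $3$ that is $\not\equiv0\pmod3$, hence $1$). Now set $\psi^*(v)=\max_{c_i\in V(C)}\bigl(\psi(c_i)-d_G(v,c_i)\bigr)$. By the standard behaviour of such lower envelopes, $\psi^*$ is $1$‑Lipschitz on $G$ and, by the inequality just verified, agrees with $\psi$ on $V(C)$; since $G$ is bipartite, $\psi^*$ respects the two colour classes in parity, so it differs by exactly $1$ across every edge, i.e.\ it is a graph homomorphism $G\to\mathbb{Z}$. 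Reducing $\psi^*$ modulo $3$ (and translating by the constant $\varphi(c_1)$) then gives a proper $3$‑colouring of $G$ extending $\varphi$.

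The main obstacle lives in the ``only if'' direction, and it is the geometric lemma that $d_G(c_i,c_j)=d_C(c_i,c_j)$ away from chords: one must argue that an interior path shorter than this, together with a boundary arc, would enclose a disk subdivided into $4$‑faces whose boundary is an odd closed walk — impossible by the parity remark. The distance‑$3$ pairs (where a genuine chord is permitted) are the fiddly case, and are exactly what forces the ``up to reversal'' clause in the antipodal description; everything else is routine parity arithmetic. Note that Theorem~\ref{thm-gimbel} is not needed for this particular statement, though it would allow one to first pass to a $C$‑critical subgraph of $G$, which changes nothing essential in the argument above.
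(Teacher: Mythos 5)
The paper does not prove Theorem~\ref{thm-gimbelext}: it is quoted from Gimbel and Thomassen without an in-paper argument, so there is no internal proof to compare yours against. Judged on its own, your proof is correct and complete. The ``if'' direction (summing the $\mathbb{Z}_3$-tensions over the $4$-faces to force the boundary tension to vanish) is the standard homological obstruction, and the ``only if'' direction is the classical height-function construction for quadrangulations of a hexagon. Two remarks. First, your ``key geometric lemma'' is more elaborate than it needs to be: since the outer face is a $6$-cycle and all other faces are $4$-faces, $G$ is bipartite, and then $d_G(c_i,c_j)\le d_C(c_i,c_j)\le 3$ together with $d_G(c_i,c_j)\equiv d_C(c_i,c_j)\pmod 2$ already forces $d_G=d_C$ except in the chord case $d_C=3$, $d_G=1$; no separate enclosed-disk argument is required beyond bipartiteness, which you have already recorded. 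Second, the chord case of the inequality $|\psi(c_i)-\psi(c_j)|\le d_G(c_i,c_j)$ works only because the statement precolors $G[V(C)]$ (the induced subgraph, including any chords $c_ic_{i+3}$), which is what yields $\psi(c_i)-\psi(c_{i+3})\not\equiv 0\pmod 3$ and hence the value $\pm 1$; you do invoke this hypothesis, correctly, but it is the one place where the result would fail if only the cycle $C$ were precolored. The remaining steps --- that the lower envelope $\psi^*$ is $1$-Lipschitz, restricts to $\psi$ on $V(C)$, and by bipartite parity changes by exactly $1$ across every edge, so that its reduction modulo $3$ is a proper extension of $\varphi$ --- are all sound.
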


\begin{figure}
\center{\includegraphics{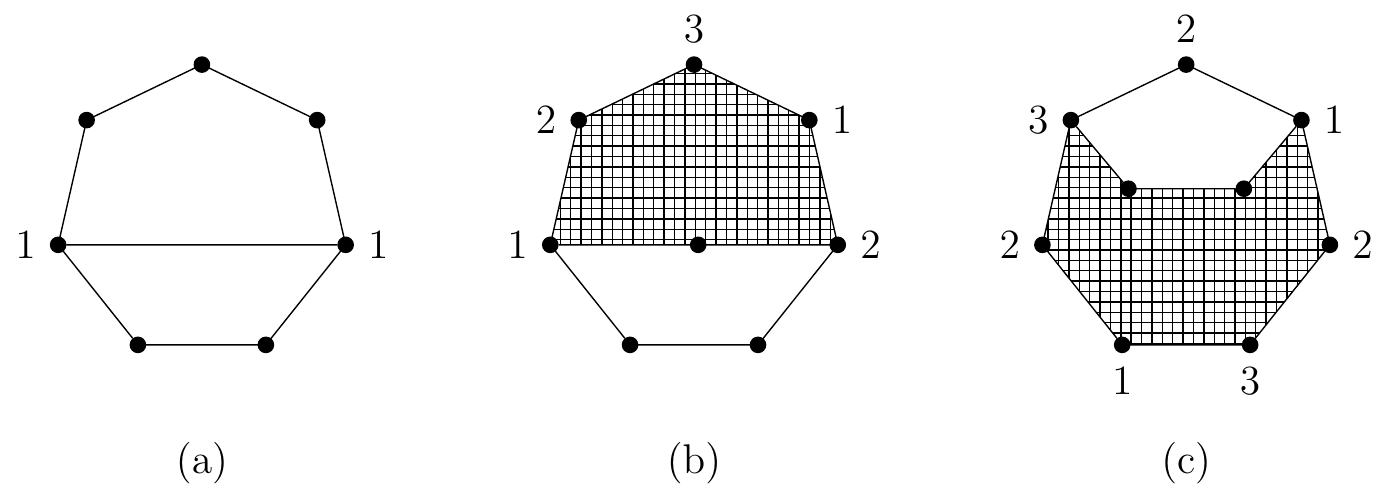}}
\caption{Critical graphs with a precolored $7$-face.}\label{fig-7cyc}
\end{figure}
An analogous result for a $7$-cycle was obtained by Aksenov et al.~\cite{aksenov7cyc}.
Their paper is in Russian. For a shorter proof in English see~\cite{dvorak8cyc}.

\begin{theorem}[Aksenov et al.~\cite{aksenov7cyc}]\label{thm-7cyc}
Let $G$ be a plane triangle-free graph with the outer face bounded by a cycle $C=c_1\ldots c_7$ of length $7$.  The graph $G$ is $C$-critical
and $\varphi:V(C)\to \{1,2,3\}$ is a $3$-coloring of $C$ that does not extend to a $3$-coloring of $G$ if and only if $G$ contains no separating cycles of length at most five
and one of the following propositions is satisfied up to relabelling of vertices (see Figure~\ref{fig-7cyc} for an illustration).
\begin{itemize}
\item[\textrm(a)] The graph $G$ consists of $C$ and the edge $c_1c_5$, and $\varphi(c_1)=\varphi(c_5)$.
\item[\textrm(b)] The graph $G$ contains a vertex $v$ adjacent to $c_1$ and $c_4$, the cycle $c_1c_2c_3c_4v$ bounds a 5-face and every face drawn inside
the $6$-cycle $vc_4c_5c_6c_7c_1$ has length four; furthermore, $\varphi(c_4)=\varphi(c_7)$ and $\varphi(c_5)=\varphi(c_1)$.
\item[\textrm(c)] The graph $G$ contains a path $c_1uvc_3$ with $u,v\not\in V(C)$, the cycle $c_1c_2c_3vu$ bounds a 5-face and every face drawn inside
the $8$-cycle $uvc_3c_4c_5c_6c_7c_1$ has length four; furthermore, $\varphi(c_3)=\varphi(c_6)$, $\varphi(c_2)=\varphi(c_4)=\varphi(c_7)$
and $\varphi(c_1)=\varphi(c_5)$.
\end{itemize}
\end{theorem}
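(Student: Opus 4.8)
The plan is to prove both directions of the equivalence, with the overwhelming majority of the work in the ``only if'' direction. For the ``if'' direction I would check directly that each graph in~(a)--(c), equipped with the stated precoloring $\varphi$, is $C$-critical and that $\varphi$ does not extend. In~(a) the chord $c_1c_5$ together with $\varphi(c_1)=\varphi(c_5)$ is an immediate obstruction, and the only proper subgraph containing $C$ is $C$ itself, to which every precoloring trivially extends. In~(b) and~(c) I would first extend $\varphi$ across the precolored $5$-face in the unique way forced by requiring the incident faces to have length $4$, and then invoke Theorem~\ref{thm-gimbelext} for the quadrangulated $6$-cycle (directly in case~(b); in case~(c) after a short reduction of the quadrangulated $8$-cycle region, or via an analogous $8$-cycle non-extension statement) to conclude that $\varphi$ does not extend. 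Criticality then follows because every internal face of $G$ is a $4$- or a $5$-face: deleting any edge not on $C$ either creates a cut edge or merges two faces into one of length at least $6$, and in either case $\varphi$, up to the allowed relabelling, extends to the smaller graph; deleting any interior vertex likewise destroys the obstruction. All of this is routine once the earlier theorems are granted, together with the standard consequence of Theorem~\ref{thm-gimbel} that every precoloring of a $4$- or $5$-cycle extends to any triangle-free plane graph having it as its outer face.

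For the ``only if'' direction, assume $G$ is triangle-free and plane with outer face bounded by the $7$-cycle $C$, that $G$ is $C$-critical, and that $\varphi$ is a $3$-coloring of $C$ not extending to $G$; in particular $G\neq C$. I would first show $G$ has no separating cycle of length at most $5$: triangle-freeness kills $3$-cycles, while a separating $4$- or $5$-cycle $D$ would, by the $C$-critical analogue of Lemma~\ref{lemma-critin}, force the subgraph of $G$ drawn in the closed disk bounded by $D$ to be either $D$ itself (contradicting that $D$ separates) or $D$-critical (contradicting Theorem~\ref{thm-gimbel}, which provides no $C$-critical triangle-free plane graph with $|C|\le 5$). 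Next I would settle the case where $C$ has a chord $e=c_ic_j$: then $e$ splits the disk bounded by $C$ into two disks whose boundary cycles have lengths summing to $9$, hence $4$ and $5$; by the previous step neither is separating, so each bounds a face, whence $G=C+e$, which after relabelling is exactly case~(a) with $\varphi$ its unique non-extending precoloring.

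It remains to handle the case where $C$ is induced and $G$ has at least one interior vertex. This is where the real work lies. I would run a discharging argument in the style of Gimbel--Thomassen: assign to each face $f$ the charge $\ell(f)-4$ and to each vertex $v$ the charge $\deg(v)-4$, so that Euler's formula makes the total charge equal to $-8$. Criticality yields the requisite reducible-configuration lemmas via short color-extension arguments --- e.g.\ every interior vertex has degree at least $3$ (otherwise it has two non-adjacent neighbors and can be deleted without losing the non-extending precoloring), $G$ is $2$-connected, and a short list of local configurations near $C$ and near interior $3$-vertices is forbidden. Since the outer $7$-face (with charge $+3$) is essentially the only source of positive charge, discharging rules redistributing it to interior $3$-vertices and to $5$-faces can be shown to force $G$ into one of the forms in~(b) or~(c): all interior faces are $4$-faces except for a single $5$-face sharing three, respectively two, consecutive edges with $C$ (positions where the $5$-face is farther from $C$ are excluded because the region between it and $C$ would then leave enough freedom to extend every precoloring, contradicting $C$-criticality). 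With the structure in hand, Theorem~\ref{thm-gimbelext} applied across the extension of $\varphi$ forced over that $5$-face identifies $\varphi$ as precisely the precoloring listed in~(b), resp.\ (c).

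The main obstacle is this last discharging analysis: isolating the correct family of reducible configurations and designing discharging rules strong enough to pin the structure down to~(b) and~(c). This is the heart of the matter --- it is exactly what the proofs in~\cite{aksenov7cyc} and~\cite{dvorak8cyc} accomplish --- whereas the reduction to the chord-free case and the verification of the ``if'' direction are comparatively mechanical.
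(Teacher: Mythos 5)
There is a genuine gap, and it is one you yourself flag: the entire content of the theorem lives in the case where $C$ is induced and $G$ has interior vertices, and there your proposal only asserts that a discharging argument ``can be shown to force $G$ into one of the forms in (b) or (c)'' without specifying the reducible configurations, the discharging rules, or why the unique non-quadrilateral interior face must be a $5$-face sharing two or three consecutive edges with $C$. Writing ``this is the heart of the matter'' is an accurate self-assessment, but it means the proposal is an outline of a strategy rather than a proof; as one sanity check, your initial charge assignment gives the outer face only $+3$ against a total of $-8$, so the claim that the outer face is ``essentially the only source of positive charge'' already shows the bookkeeping is not as clean as stated and would need real work. The peripheral parts of your plan are sound: the exclusion of separating cycles of length at most $5$ via the $C$-critical analogue of Lemma~\ref{lemma-critin} together with Theorem~\ref{thm-gimbel}, the reduction of the chord case to configuration (a), and the verification of the ``if'' direction via Theorem~\ref{thm-gimbelext} are all correct in outline (though the ``if'' direction also requires checking $C$-criticality against \emph{every} proper subgraph, not just edge-deleted ones, which your sketch glosses over).

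For what it is worth, there is no proof in this paper to compare against: Theorem~\ref{thm-7cyc} is imported as a black box from Aksenov, Borodin and Glebov, with a shorter English proof in the cited work of Dvo\v{r}\'ak and Lidick\'y. Those proofs do carry out essentially the program you describe --- reducible configurations plus a global counting/discharging step in the Gimbel--Thomassen style --- so your approach is the right one; it is simply not executed where it matters.
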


By inspection of the cases (a),(b) and (c) of Theorem~\ref{thm-7cyc} we obtain the following fact.
\begin{fact}\label{f-5f}
Let $G$ be a plane triangle-free graph with the outer face bounded by a cycle $C$ of length $7$.
Suppose that $G$ is $C$-critical and that $\varphi$ is a precoloring of $C$ that does not extend to a $3$-coloring of $G$.
Let $x$, $y$ and $z$ be consecutive vertices of $C$.  If $\varphi(x)=\varphi(z)$, then $y$ is incident with the $5$-face of $G$.
\end{fact}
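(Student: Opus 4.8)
The plan is to argue directly from the case analysis of Theorem~\ref{thm-7cyc}. Since $G$ is $C$-critical with $C$ a $7$-cycle and $\varphi$ is a precoloring that does not extend, we are in one of the three cases (a), (b) or (c) (up to relabelling), and in each of these cases $G$ has exactly one face of length other than $4$, namely a single $5$-face (in case (a) the graph is just $C$ plus a chord, which has two faces of length $5$ and $4$; I would double-check this degenerate case, but there the hypothesis $\varphi(x)=\varphi(z)$ combined with the constraint $\varphi(c_1)=\varphi(c_5)$ forces $x,y,z$ to be located so that $y$ lies on the $5$-face, which in case (a) is the $5$-face $c_1c_5c_6c_7$ together with the chord — so this needs a careful look). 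So first I would set up the three cases and identify in each the unique short face and the list of equalities among the $\varphi$-values of vertices of $C$ that the theorem asserts.

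Next, fix consecutive vertices $x,y,z$ on $C$ with $\varphi(x)=\varphi(z)$. In each of the cases (b) and (c), the theorem gives a complete list of the monochromatic pairs of vertices of $C$ forced by $\varphi$; for case (b) these are the pairs $\{c_4,c_7\}$ and $\{c_1,c_5\}$ (and the colors are otherwise as constrained), and for case (c) they are $\{c_3,c_6\}$, $\{c_2,c_4\}$, $\{c_2,c_7\}$, $\{c_4,c_7\}$ and $\{c_1,c_5\}$. The key observation is that a pair of vertices at distance exactly two along the $7$-cycle $C$ can be monochromatic only if it is one of these listed pairs — because if $\varphi$ assigned the same color to some other pair of vertices two apart, one could check (going around the $7$-cycle using $3$ colors) that this is incompatible with the listed equalities and with $\varphi$ being a proper coloring of $C$. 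So I would enumerate the pairs $\{x,z\}$ of vertices two apart on $C$ and match each against the allowed monochromatic pairs in cases (b) and (c); in every instance the middle vertex $y$ turns out to be a vertex incident with the $5$-face (in case (b): $v$, $c_1$, or $c_4$; in case (c): $c_1$, $c_3$, $u$, or $v$).

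The main obstacle is the bookkeeping: there are $7$ choices of the consecutive triple $(x,y,z)$ in each case, and one has to verify for each that either $\varphi(x)\neq\varphi(z)$ is forced (so the hypothesis is vacuous) or $y$ is on the $5$-face. This is purely a finite check, but it is easy to make a sign or indexing error, so I would organize it as a short table for cases (b) and (c), and handle the degenerate case (a) separately and explicitly. I expect no conceptual difficulty beyond this careful enumeration, since Theorem~\ref{thm-7cyc} already supplies the exact structure and the exact set of non-extending precolorings.
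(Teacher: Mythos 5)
Your proposal matches the paper's own proof, which is literally ``by inspection of the cases (a), (b) and (c) of Theorem~\ref{thm-7cyc}'': one runs through the seven consecutive triples of $C$ in each case and checks that either $\varphi(x)\neq\varphi(z)$ is forced or $y$ lies on the unique $5$-face. Two small corrections for when you carry out the check: in case (a) the $5$-face is $c_1c_2c_3c_4c_5$ (not $c_1c_5c_6c_7$, which is the $4$-face), and your ``key observation'' that only the listed pairs can be monochromatic at distance two fails in case (b), where $\varphi(c_2)$ and $\varphi(c_3)$ are not determined and, e.g., $\varphi(c_3)=\varphi(c_5)$ is possible --- but the middle vertex $c_4$ of that pair is on the $5$-face, so the full triple-by-triple enumeration you plan as a fallback still goes through.
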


We also use the following result dealing with graphs with a triangle.

\begin{theorem}[Aksenov~\cite{aksenov}]\label{thm-aksen}
Let $G$ be a plane graph with the outer face bounded by a cycle $C=c_1c_2\ldots$ of length at most $5$.  If $G$ is $C$-critical
and contains exactly one triangle $T$ distinct from $C$, then $C$ is a $5$-cycle,
all internal faces of $G$ other than $T$ have length exactly four and $T$ shares at least one edge with $C$.
Furthermore, if $C$ and $T$ share only one edge $c_1c_2$ and $\varphi$ is a $3$-coloring of $C$
that does not extend to a $3$-coloring of $G$, then $\varphi(c_1)=\varphi(c_3)$ and $\varphi(c_2)=\varphi(c_5)$.
\end{theorem}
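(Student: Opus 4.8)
\emph{Proof proposal.}
To prove the theorem, the strategy is to reduce the single-triangle situation to the triangle-free case and then apply Theorems~\ref{thm-gimbel} and~\ref{thm-gimbelext}, using the Gr\"unbaum--Aksenov Theorem (Theorem~\ref{thm-aksenov}) to dispose of the degenerate case $|C|=3$. First I would collect the usual structural consequences of $C$-criticality: $G$ is $2$-connected, the outer cycle $C$ is induced and chordless, and every vertex of $V(G)\setminus V(C)$ has degree at least $3$ --- each follows in the standard way, since a chord or a low-degree interior vertex would let one delete or suppress an edge or a vertex of $G$ without changing which precolourings of $C$ extend, contradicting criticality. I would also use repeatedly that if a precolouring $\varphi$ of $C$ does not extend to a plane graph $G_0\supseteq C$ with outer face $C$, then some subgraph $H$ with $C\subseteq H\subseteq G_0$ is $C$-critical and $\varphi$ still does not extend to $H$; this is immediate from minimality.

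Next I would show that $T$ bounds a face and that $|C|\ge 4$. If $T$ were separating, the subgraph $G_1$ of $G$ drawn inside $T$ together with $T$ itself would have only the one triangle $T$, hence be $3$-colourable by Theorem~\ref{thm-aksenov}; since $T$ is a triangle, every $3$-colouring of $T$ then extends into $G_1$ up to permuting colours, so deleting an interior edge of $G_1$ would not affect extendability of precolourings of $C$, contradicting criticality. If $|C|=3$, then $G$ has only the two triangles $C$ and $T$, so $G$ is $3$-colourable by Theorem~\ref{thm-aksenov}; as all precolourings of a triangle are equivalent up to permuting colours, every precolouring of $C$ extends to $G$ and $G$ is not $C$-critical. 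Hence $|C|\in\{4,5\}$ and $T$ bounds a face $f$.

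The heart of the proof is to show that $T$ shares an edge with $C$. Write $T=t_1t_2t_3$. Since $C$ is chordless and $|C|\le 5$, $T$ cannot meet $C$ in two or three vertices without sharing an edge with $C$ (two non-consecutive vertices of $C$ joined by an edge of $T$ would be a chord), so it remains to rule out $|V(T)\cap V(C)|\in\{0,1\}$. In both cases $T$ has an edge $e$ with both ends outside $V(C)$, and since $|C|\ge 4$ the graph $G-e$ is triangle-free and contains $C$. If some precolouring of $C$ fails to extend both to $G$ and to $G-e$, then the first paragraph produces a triangle-free $C$-critical plane graph with outer cycle of length at most $5$, contradicting Theorem~\ref{thm-gimbel}; so it suffices to find such a precolouring. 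I expect this to be the main obstacle: it can happen that every precolouring failing on $G$ extends to $G-e$ --- equivalently, each such precolouring forces the two ends of $e$ to receive equal colours in $G-e$ --- and excluding this requires a finer local analysis near $f$ (using also the deletions $G-t_i$ for the vertices $t_i\notin V(C)$, or a planar identification of two vertices of $T$ across the face $f$, while checking that no new triangle is created). This is precisely the type of case analysis carried out in Aksenov's original argument.

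Finally, assume $T$ shares an edge $c_1c_2$ with $C$; chordlessness then forces $T=c_1c_2w$ with $w\notin V(C)$, and $c_1c_2$ is its only edge on $C$. Let $C'$ be obtained from $C$ by replacing the edge $c_1c_2$ with the path $c_1wc_2$. As $f$ is exactly the region between $C$ and $C'$, the subgraph $G'$ of $G$ drawn in the closed interior of $C'$ equals $G-c_1c_2$; it is triangle-free with outer face bounded by the cycle $C'$ of length $|C|+1\le 6$. By the reasoning recorded just before Lemma~\ref{lemma-critin} (with ``$C$-critical'' in place of ``$4$-critical''), either $G'=C'$ or $G'$ is $C'$-critical. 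If $|C|=4$, then $|C'|=5$ and Theorem~\ref{thm-gimbel} excludes a triangle-free $C'$-critical graph with a $5$-cycle outer face, so $G'=C'$; but then $G$ is $C$ plus the single vertex $w$ adjacent only to $c_1$ and $c_2$, so every precolouring of $C$ extends and $G$ is not $C$-critical. Hence $|C|=5$, $|C'|=6$, $G'\neq C'$, and Theorem~\ref{thm-gimbel} gives that every internal face of $G'$ --- i.e.\ every internal face of $G$ other than $f=T$ --- has length $4$. For the colouring statement, write $C'=c_1wc_2c_3c_4c_5$; a precolouring $\varphi$ of $C$ extends to $G$ if and only if it extends to $G'$, which by Theorem~\ref{thm-gimbelext} holds exactly when some colour $a\notin\{\varphi(c_1),\varphi(c_2)\}$ for $w$ makes the colouring of $C'$ violate the ``opposite vertices equal'' condition. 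Since $\varphi(c_1)\neq\varphi(c_2)$ there is a unique admissible $a$, so ``$\varphi$ does not extend'' becomes ``$\varphi(c_1)=\varphi(c_3)$, $\varphi(c_2)=\varphi(c_5)$ and $a=\varphi(c_4)$''; but $\varphi(c_1)=\varphi(c_3)$ and $\varphi(c_2)=\varphi(c_5)$ already force $\varphi(c_4)\notin\{\varphi(c_1),\varphi(c_2)\}$ via the edges $c_3c_4,c_4c_5,c_1c_2$, hence $a=\varphi(c_4)$ automatically, and the condition reduces to $\varphi(c_1)=\varphi(c_3)$ and $\varphi(c_2)=\varphi(c_5)$, as claimed.
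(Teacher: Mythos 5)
First, note that the paper does not prove Theorem~\ref{thm-aksen} at all: it is imported verbatim from Aksenov's paper~\cite{aksenov} and used as a black box, so there is no internal proof to compare yours against. Judged on its own, your proposal has the right overall architecture (reduce to the triangle-free Theorems~\ref{thm-gimbel} and~\ref{thm-gimbelext}; your treatment of the case $|C|=3$, of a separating $T$, and your derivation of the coloring condition $\varphi(c_1)=\varphi(c_3)$, $\varphi(c_2)=\varphi(c_5)$ once $T=c_1c_2w$ is known, are all sound). But it contains one genuine gap and one false claim. The gap is the crux of the theorem: you never prove that $T$ shares an edge with $C$. Your reduction via deleting an edge $e$ of $T$ with both ends off $C$ only shows that every precoloring of $C$ that fails on $G$ must extend to $G-e$, i.e.\ that every extension to $G-e$ forces the two ends of $e$ to agree; you then say that excluding this ``requires a finer local analysis'' and that ``this is precisely the type of case analysis carried out in Aksenov's original argument.'' That is not a proof --- it is a pointer to the proof you were asked to supply. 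The natural next move (identify the two ends of $e$ in $G-e$ and look for a $C$-critical subgraph of the quotient) founders exactly where you suspect: the identification can create new triangles from paths of length three between the ends of $e$, and controlling those is the real content of Aksenov's argument. As written, the central structural assertion of the theorem is assumed, not derived.

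The false claim is that $C$-criticality forces $C$ to be chordless ``in the standard way, since a chord \ldots would let one delete \ldots an edge \ldots without changing which precolourings of $C$ extend.'' Deleting a chord $c_ic_j$ very much changes which precolorings extend: any $\varphi$ with $\varphi(c_i)=\varphi(c_j)$ extends to $G-c_ic_j$-type subgraphs but not to $G$. Indeed, case (a) of Theorem~\ref{thm-7cyc} in this very paper is a $C$-critical graph consisting of $C$ plus a chord, and for $|C|=5$ the graph $C+c_1c_3$ is a legitimate $C$-critical graph with exactly one triangle distinct from $C$ --- it satisfies the hypotheses of Theorem~\ref{thm-aksen} and must be covered by your argument, not excluded by it. (In that case $T$ shares two edges with $C$, so the conclusion of the theorem still holds, but the case has to be analysed: one must check via Theorem~\ref{thm-gimbel} that the residual $4$-cycle bounds a face.) You also invoke chordlessness again at the end (``chordlessness then forces $T=c_1c_2w$ with $w\notin V(C)$''), where it is harmless only because the ``furthermore'' clause presupposes that $T$ and $C$ share a single edge. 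Both issues need to be repaired before this could stand as a proof of the theorem.
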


Recall that the notion of a patch was defined in the introduction.  First, we need to argue that replacing vertices
of degree  $3$  by critical patches preserves criticality and the number of triangles.
\begin{lemma}\label{lemma-pcr}
Let $G$ be a $4$-critical graph, let $v$ be a vertex of $G$ of degree  $3$  and let $x$, $y$ and $z$ be the neighbors of $v$ in $G$.
Let $G'$ be a graph obtained from $G-v$ by inserting a patch $P$ with boundary $C_P=xz'yx'zy'$, where $x'$, $y'$ and $z'$ are new vertices.
Then $G'$ is $4$-critical if and only if the patch $P$ is critical.
Furthermore, if $P$ is critical, then $t(G')=t(G)$.
\end{lemma}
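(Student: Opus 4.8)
The plan is to handle the ``furthermore'' statement first, since it is the cleanest: once we know the construction behaves well, the count of triangles is a local matter. A patch $P$ has all internal faces of length $4$ and its boundary is a $6$-cycle $C_P$; since $G$ is planar and $v$ had degree $3$, inserting $P$ into the plane drawing of $G-v$ in the region vacated by $v$ and its three incident edges keeps the graph plane, and no triangle can use a ``new'' edge together with an ``old'' edge except possibly through $x$, $y$, $z$. A triangle of $G$ through $v$ uses two of the edges $vx, vy, vz$, hence would need one of $xy, yz, zx$ to be an edge of $G$; but the construction does not add those edges, and in fact in $G$ such a triangle would be a triangle on $\{v\}\cup\{x,y,z\}$, of which there are at most as many as there are edges among $x,y,z$ in $G$. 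I would argue that because $G$ is $4$-critical, $v$ lies in no triangle: if, say, $xy\in E(G)$, then $\{v,x,y\}$ would be a triangle and a $4$-coloring argument (contracting the path $x v y$, or more simply observing $v$ has a neighbor of each of two colors in any $3$-coloring of $G-v$ restricted appropriately) shows $G-v$ is $3$-colorable in a way extending to $G$ unless... — more carefully, the standard fact is that a $3$-vertex in a $4$-critical graph cannot lie on a triangle, because then its two triangle-neighbors plus the third neighbor give only $\le 3$ forbidden colors on $v$ but actually we can always recolor; I would cite or reprove that $v$'s neighborhood is an independent set. Granting this, triangles of $G$ and triangles of $G'$ are in bijection: every triangle of $G$ avoiding $v$ survives in $G'$, $G$ has no triangle through $v$, and every triangle of $G'$ lies either entirely outside $P$ (hence is a triangle of $G-v\subseteq G$) or uses an edge of $P$, but $P$ being a patch with all internal $4$-faces has no triangle at all — here I use the criticality clause only mildly, but actually even a non-critical patch could in principle have a triangle, so this is where I need ``every $4$-cycle in $P$ bounds a face'' or rather I need: a plane graph whose internal faces are all $4$-faces and whose outer face is a $6$-face is triangle-free, which follows since a triangle would bound a region tiled by $4$-faces, impossible by a parity/Euler count. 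Hence $t(G')=t(G)$, and this part does not even need $P$ critical beyond being a patch.

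For the main equivalence, I would use Lemma~\ref{lemma-critin} in both directions. \textbf{($G'$ $4$-critical $\Rightarrow$ $P$ critical):} Suppose $P$ is not critical. Either some vertex among $x',y',z'$ has degree $\le 2$ in $P$ (hence in $G'$, since the patch supplies all their neighbors), contradicting that a $4$-critical graph has minimum degree $\ge 3$; or $P$ contains a $4$-cycle $Q$ not bounding a face. If $Q$ bounds a region $\Lambda$ that is not a face and lies inside $\Delta$, then by Lemma~\ref{lemma-critin} the subgraph $H$ drawn in the closure of $\Lambda$ is $Q$-critical; but $H$ is triangle-free (it sits inside the patch, all of whose internal faces are $4$-faces, so by the parity count above it has no triangle) with outer $4$-cycle, and by Theorem~\ref{thm-gimbel} a triangle-free plane graph that is $C$-critical for a cycle $C$ of length $\le 6$ must have $|C|=6$ — contradiction. (I should also rule out $Q$ enclosing $x,y,z$ on the outside, but the outer boundary of the patch is the $6$-cycle $C_P$, so any $4$-cycle of $P$ bounds a disk contained in $\Delta$.) Hence $P$ is critical.

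\textbf{($P$ critical $\Rightarrow$ $G'$ $4$-critical):} First, $G'$ is not $3$-colorable: a $3$-coloring of $G'$ restricts to a $3$-coloring of $C_P$ with the ``bad'' pattern $\varphi(x)=\varphi(x')$... wait — rather, a $3$-coloring of $G'$ gives a $3$-coloring of $G'[V(C_P)]$ that \emph{does} extend over $P$, so by Theorem~\ref{thm-gimbelext} it is \emph{not} the pattern $\varphi(x)=\varphi(x'),\varphi(z')=\varphi(z),\varphi(y)=\varphi(y')$ (using the cyclic order $x z' y x' z y'$, the forbidden pattern equates opposite vertices: $\varphi(x)=\varphi(x')$, $\varphi(z')=\varphi(z)$, $\varphi(y)=\varphi(y')$). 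Now define a $3$-coloring of $G-v$ by forgetting the interior of $P$ and keeping $\varphi$ on $x,y,z$; I claim this extends to $G$ by choosing a color for $v$ different from $\varphi(x),\varphi(y),\varphi(z)$ — this is possible iff $\{\varphi(x),\varphi(y),\varphi(z)\}\ne\{1,2,3\}$. So I need: whenever $\varphi$ is a proper coloring of $G'[V(C_P)]$ extending over $P$, the three colors $\varphi(x),\varphi(y),\varphi(z)$ are not all distinct. Equivalently, every proper $3$-coloring of $C_P$ with $x,y,z$ receiving distinct colors fails to extend over $P$ — i.e., such a coloring must be the forbidden pattern. Indeed if $\varphi(x),\varphi(y),\varphi(z)$ are distinct, then since $x z'$, $z' y$ are edges, $\varphi(z')\ne\varphi(x),\varphi(y)$, forcing $\varphi(z')=\varphi(z)$; similarly $\varphi(x')=\varphi(x)$ and $\varphi(y')=\varphi(y)$; that is exactly the non-extendable pattern, so $\varphi$ does not extend over $P$, and a fortiori over $G'$. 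Thus from any $3$-coloring of $G'$ we would obtain a $3$-coloring of $G$, contradiction; so $\chi(G')\ge 4$. Finally, criticality: for every edge $e$ of $G'$ we must $3$-color $G'-e$. If $e\in E(P)$, use that $P$ is critical: there is a $3$-coloring of $C_P$ extending over $P-e$ but — more usefully, I instead remove $e$ and note $G'-e = (G-v)\cup (P-e)$ glued along $C_P$; I $3$-color $G-v$ (possible, $G$ is $4$-critical) arbitrarily, then need its restriction to $C_P$... but $G-v$ does not contain $x',y',z'$. The cleaner route: $G$ is $4$-critical, so pick a $3$-coloring $\varphi$ of $G - v'$ for a suitable edge, then transfer. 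Let me instead argue $G'-e$ directly for the two cases. If $e$ is incident to none of $x,y,z$ and lies in $P$: since $P$ is critical there is a coloring of $C_P$ that extends over $P-e$ but not over $P$ (hence, by Theorem~\ref{thm-gimbelext}, this coloring is the forbidden pattern, so $\varphi(x),\varphi(y),\varphi(z)$ are distinct), and then color $G-v$ by extending $\varphi|_{\{x,y,z\}}$ (possible since $G$ is $4$-critical: $\varphi|_{\{x,y,z\}}$ comes from a proper coloring, and we can realize any coloring of an independent set that is the restriction of \emph{some} proper coloring — here I need to know $\varphi|_{\{x,y,z\}}$ extends to $G-v$; since $G$ is $4$-critical and $v$ has degree $3$ with independent neighborhood, $G-v$ is $3$-colorable, and all $3$-colorings of $G-v$ use all three colors on $\{x,y,z\}$ — because otherwise $v$ could be colored — so up to permutation $\varphi|_{\{x,y,z\}}$ is realized). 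If $e\in\{xz', z'y, yx', x'z, zy', yv$-type$\}$ or $e$ is in $G-v$: color $G-v-e$ or $G-v$ and extend over $P$ using that the restriction to $C_P$ is not the forbidden pattern — which holds because $e$ being on the boundary frees up one equality. This last case analysis is the main obstacle: I expect to have to split according to whether the removed boundary edge is one of the three ``short'' boundary edges $xz',yx',zy'$ or one of $z'y, x'z, y'x$, and in each subcase verify, via Theorems~\ref{thm-gimbel}, \ref{thm-gimbelext}, and the criticality of $P$, that one of the three color-equalities of the forbidden pattern can be violated. The underlying principle throughout is: $G'$ fails $3$-colorability exactly because $P$ forces the ``antipodal'' pattern on $C_P$ and that pattern is exactly what a degree-$3$ vertex $v$ cannot tolerate, so $P$ and $v$ are interchangeable as far as $3$-colorability of any subgraph is concerned, which is the content of criticality.
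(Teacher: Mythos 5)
Your overall strategy is the paper's strategy (reduce everything to the antipodal-pattern characterization of Theorem~\ref{thm-gimbelext} and to Theorem~\ref{thm-gimbel}, and exploit that a degree-$3$ vertex and a patch impose the same constraint on $\{x,y,z\}$), but there are two genuine defects. The first is the claim that a $3$-vertex of a $4$-critical graph lies on no triangle, i.e.\ that $\{x,y,z\}$ is independent. This is false: $K_4$ is a counterexample, and so is essentially every graph in this paper (the endpoints of a diamond edge in a Thomas--Walls graph have degree $3$ and lie on two triangles). The only true statement in that direction is that every $3$-coloring of $G-v$ assigns three distinct colors to $x,y,z$. Since your proof of $t(G')=t(G)$ rests entirely on this false independence claim, it collapses: when $xy\in E(G)$ the graph $G$ loses the triangle $vxy$ while $G'$ gains the triangle $xz'y$, and one must further exclude extra triangles $xwy$ with $w\in V(P)$, $w\neq z'$. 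The paper handles this with an explicit bijection $f(vxy)=xz'y$ and uses the patch-criticality condition that every $4$-cycle of $P$ bounds a face: a second common neighbor $w$ of $x$ and $y$ would make $xz'yw$ a facial $4$-cycle and force $\deg(z')=2$. Your parity argument that $P$ itself is triangle-free is fine, but it does not address triangles that use one edge of $G-v$ and a path of $P$.

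The second defect is that the hard half of criticality --- showing $G'-e$ is $3$-colorable for $e\in E(P)$ --- is not actually carried out; you flag it yourself as ``the main obstacle.'' Your proposed shortcut, ``since $P$ is critical there is a coloring of $C_P$ that extends over $P-e$ but not over $P$,'' is not the definition of a critical patch (which only requires $\deg(x'),\deg(y'),\deg(z')\ge 3$ and that every $4$-cycle bounds a face) and is exactly the assertion that needs proof. The paper's route is: take a $3$-coloring $\psi$ of $G-v$ (so $\psi(x),\psi(y),\psi(z)$ are distinct) and show it extends to $P-e$. If $e$ is an interior edge of $P$, a $C_P$-critical subgraph of $P-e$ would by Theorem~\ref{thm-gimbel} be a quadrangulation with $e$ drawn inside one of its $4$-faces, contradicting patch criticality. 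If $e$ is a boundary edge, say $e=xz'$, one adds the edge $xy$ to $P-e$; this keeps the graph triangle-free because $xz'y$ is the \emph{only} $x$--$y$ path of length at most two in $P$ (again by patch criticality and $\deg(z')\ge 3$), and the resulting graph has a $5$-cycle outer face, so a failure to extend would contradict Theorem~\ref{thm-gimbel}. Neither the uniqueness of the short path nor the edge-addition trick appears in your proposal, and without them the boundary-edge case does not go through.
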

\begin{proof}
Clearly, every $4$-critical graph has minimum degree at least $3$.  Furthermore, by Lemma~\ref{lemma-critin} and Theorem~\ref{thm-gimbel},
if $\Lambda$ is an open disk bounded by a $4$-cycle in a $4$-critical plane graph and no triangle is contained in $\Lambda$, then $\Lambda$ is
a face.  Therefore, if $G'$ is $4$-critical, then $P$ is a critical patch.

Suppose now conversely that $P$ is a critical patch.  Consider an edge $e\in E(G')$.  If $e\not\in E(P)$, then $e$ is an edge
of $G$ not incident with $v$.  Since $G$ is $4$-critical, there exists a $3$-coloring $\varphi$ of $G-e$.  The vertex $v$ is properly colored,
and thus we can by symmetry assume that $\varphi(x)=\varphi(y)$.  By Theorem~\ref{thm-gimbelext}, $\varphi$ extends to a $3$-coloring of $P$,
showing that $G'-e$ is $3$-colorable.

On the other hand,  consider the case that $e\in E(P)$.  Since $G$ is $4$-critical, there exists a $3$-coloring $\psi$ of $G-v$ such that
$\psi(x)=1$, $\psi(y)=2$ and $\psi(z)=3$.  Suppose that $\varphi$ does not extend to a $3$-coloring of $P-e$.
If $e\not\in E(C_P)$, this implies that $P-e$ contains a $C_P$-critical subgraph $P'$.  By Theorem~\ref{thm-gimbel}, all faces of $P'$ distinct
from $C_P$ have length $4$.  However, $e$ is drawn inside one of the faces of $P'$, and thus $P$ would contain a $4$-cycle not bounding a face,
contrary to the assumption that $P$ is a critical patch.

Finally, suppose that $e\in E(C_P)$, say $e=xz'$.
Note that all faces of $P$ have even length, and thus $P$ is bipartite.  Since $z'$ has degree at least
 $3$  and every $4$-cycle in $P$ bounds a face, we conclude that $xz'y$ is the only path of length at most two between $x$ and $y$ in $P$.
Let $P_1$ be obtained from $P-e$ by adding the edge $xy$, and note that $P_1$ is triangle-free.
Let $C_1=xyx'zy'$ be the $5$-cycle bounding the outer face of $P_1$.
Since $\varphi$ does not extend to a $3$-coloring of $P-e$, it also does not extend to a $3$-coloring of $P_1$, and
thus $P_1$ contains a $C_1$-critical subgraph.  This contradicts Theorem~\ref{thm-gimbel}.

We conclude that $G'-e$ is $3$-colorable for every $e\in E(G')$.  Since $G'$ does not contain isolated vertices,
this implies that every proper subgraph of $G'$ is $3$-colorable.  Suppose that $G'$ has a proper $3$-coloring $\theta$.
Since $G$ is not $3$-colorable, $\theta$ cannot be extended to $v$, and thus we can assume that $\theta(x)=1$, $\theta(y)=2$
and $\theta(z)=3$.  However, that implies that $\theta(x)=\theta(x')$, $\theta(y)=\theta(y')$ and $\theta(z)=\theta(z')$.
Since $\theta$ is a $3$-coloring of $P$, this contradicts Theorem~\ref{thm-gimbelext}.  Therefore, $G'$ is not $3$-colorable,
and thus it is $4$-critical.

Now we establish a bijection $f$ between triangles in $G$ and $G'$.  If a triangle $T$ in $G$ does not contain $v$, then $T$ also appears in $G'$,
and we set $f(T)=T$.  If $T$ contains $v$, say $T=vxy$, then we set $f(T)=z'xy$.  Since $f$ is injective,  it suffices to show that
it is surjective.  Suppose that there exists a triangle $T'\subset G'$ that is not in the image of $f$.  Then $T'$ contains
an edge of $P$.  Since $P$ is bipartite, $T'$  contains an edge outside of $P$, and since $x$, $y$ and $z$ are non-adjacent in the patch $P$, we conclude that $T'$
intersects $P$ in a path of length two, say $xwy$, and $x$ and $y$ are adjacent in $G$.  Since $f(vxy)=xz'y$ and $T'$ is not in the image
of $f$, we conclude that $w\neq z'$.  Since $P$ is a critical patch, the $4$-cycle $xz'yw$ bounds a face.  However, this implies that $z'$ has
degree two, which is a contradiction.  Therefore, $f$ is indeed a bijection, and thus $t(G')=t(G)$.
\end{proof}

By Lemma~\ref{lemma-pcr}, the graphs described in Theorem~\ref{thm-main} are indeed $4$-critical and have exactly $4$ triangles.
If $G$ is obtained from a \plfnof-graph by replacing non-adjacent vertices of degree $3$ with (not necessarily critical) patches, then we say that
$G$ is an \emph{expanded \plfnof-graph}.  By Lemma~\ref{lemma-pcr},  it remains to show that every \plf-graph is an expanded \plfnof-graph.
First we state several properties of expanded \plfnof-graphs.

\begin{claim}\label{cl-patch}
If $P$ is a patch in an expanded \plfnof-graph $G$, then no vertex of $P$ is incident with exactly one edge that does not belong to $P$.
\end{claim}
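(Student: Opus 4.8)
The plan is to take a vertex $w$ of $P$ incident with exactly one edge $e=wu$ that is not in $E(P)$ and derive a contradiction according to where $w$ sits in $P$. Write $C_P=c_1\ldots c_6$ with $(c_1,\ldots,c_6)=(x,z',y,x',z,y')$; since $C_P$ has length six while all internal faces of the disk $\Delta$ bounded by $C_P$ have length four, $P$ is bipartite with parts $\{x,y,z\}$ and $\{x',y',z'\}$. I would first record two facts: \emph{(i)} no chord of $C_P$ is drawn inside $\Delta$, as it would join two vertices at distance two on $C_P$ and cut off a triangular internal face; and \emph{(ii)} if $G\ne K_4$ then $G$ has no separating triangle, since by Lemma~\ref{lemma-critin} and Theorem~\ref{thm-gimbel} a separating triangle $T$ would cut off a part equal to $T$ or to a triangle-free $T$-critical graph -- the latter impossible as $|T|=3<6$ -- and iterating forces $G=K_4$. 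Also, since every neighbour of $x'$, $y'$, $z'$ lies in $V(P)$ by the definition of a patch, only $x$, $y$, $z$ among the vertices of $C_P$ can have a neighbour outside $V(P)$. If $w\notin V(C_P)$, then $w$ lies in the interior of $\Delta$, all its edges are drawn in $\Delta$ and hence in $E(P)$, so $e$ cannot exist; thus from now on $w\in V(C_P)$.

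Suppose first that $w$ is an outer corner, say $w=x$. Any edge of $x$ outside $E(P)$ either leaves $V(P)$ or is a chord of $C_P$ drawn outside $\Delta$, and by hypothesis $e$ is the only one. The key observation is that $P$ behaves like a single vertex of degree three: by Theorem~\ref{thm-gimbelext}, a $3$-colouring $\psi$ of $\{x,y,z\}$ extends to a $3$-colouring of $G[V(P)]$ if and only if $\psi$ does not assign $x,y,z$ three distinct colours (if it does, the colours of $z',x',y'$ along $C_P$ are forced into the non-extendable pattern of Theorem~\ref{thm-gimbelext}; otherwise that pattern cannot arise, and there is room to colour $C_P$, the at most three chords drawn outside $\Delta$, and the interior). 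Let $G^{*}$ be obtained from $G$ by deleting $V(P)\setminus\{x,y,z\}$ and adding a vertex $p$ adjacent to $x,y,z$. Since $p$ imposes on $\{x,y,z\}$ exactly the restriction that $P$ does, and no edge of $G$ joins $V(P)\setminus\{x,y,z\}$ to $V(G)\setminus V(P)$, every $3$-colouring of the planar graph $G^{*}$ yields one of $G$; hence $G^{*}$ is not $3$-colourable. But all $E(P)$-edges at $x$ are deleted in $G^{*}$, so $\deg_{G^{*}}(x)\le 2$, and $p$ has degree three with $x$ as a neighbour, so no $4$-critical subgraph of $G^{*}$ contains $x$ or $p$; thus $G^{*}-x-p$, an induced subgraph of $G-x$, contains a $4$-critical subgraph, contradicting that $G$ is $4$-critical. (For a patch actually used to build $G$ this case is immediate: $x',y',z'$ are new and have all their edges inside $\Delta$, and an outer corner retains at least two edges of the \plfnof-graph it came from, none of which lies in $E(P)$.)

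Finally suppose $w\in\{x',y',z'\}$, say $w=z'$; this is the step I expect to be the main obstacle. Here $u\in V(P)$, so $e$ is a chord of $C_P$ drawn outside $\Delta$, joining $z'$ to one of $x',z,y'$. If $e=z'x'$ or $e=z'y'$, then $e$ together with a length-two subpath of $C_P$ forms a triangle which, by fact (ii), bounds a face outside $\Delta$; I would then analyse the rotations at $z'$ and at the other endpoint, using that all internal faces of $\Delta$ have length four, to force a second outside chord at $z'$ or a short separating cycle, contradicting the hypothesis or fact (ii). If $e=z'z$, then $e$ splits the exterior of $\Delta$ into two regions bounded by the $4$-cycles $z'yx'z$ and $z'xy'z$; by Lemma~\ref{lemma-critin} and Theorem~\ref{thm-gimbel}, any of these regions that is not a face contains a triangle, and since $G$ has at most four triangles, all outside $\Delta$, a short case analysis on which regions are faces -- together with the rotations at $z'$ and $z$ and the absence of separating triangles -- yields a contradiction. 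Carrying out this last case, that is, keeping track of the triangle budget and the local structure around the outside chord, is the technical heart of the claim; with it in hand, the three cases together show that no vertex of $P$ is incident with exactly one edge outside $E(P)$.
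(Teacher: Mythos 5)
Your proposal misses the one idea the paper's proof actually rests on: the \emph{definition} of an expanded \plfnof-graph. By definition, $G$ comes with an underlying \plfnof-graph $G_0$ in which the patch $P$ replaces a vertex $v$ of degree $3$ with neighbours $x$, $y$, $z$. Hence every vertex of $V(P)\setminus\{x,y,z\}$ (the new vertices $x'$, $y'$, $z'$ and the interior of the patch) has \emph{all} of its edges inside $P$ by construction --- so zero edges outside $P$, not one --- while each of $x$, $y$, $z$ has degree at least $3$ in the $4$-critical graph $G_0$ and therefore at least two edges not incident with $v$, all of which survive in $G$ and lie outside $P$. That is the whole proof. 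Your argument never invokes this structure; instead it treats the claim as a property of an arbitrary $4$-critical plane graph containing a patch, repeatedly using that $G$ is not $3$-colourable (in the construction of $G^{*}$, in your ``fact (ii)'', and in every appeal to Lemma~\ref{lemma-critin}). But the statement is about expanded \plfnof-graphs, whose patches are explicitly ``not necessarily critical''; by Lemma~\ref{lemma-pcr}, such a $G$ need not be $4$-critical at all, so the hypothesis your entire argument leans on is simply not available.

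Even setting that aside, the proposal is not complete on its own terms. The case $w\in\{x',y',z'\}$ --- which you yourself flag as the technical heart --- is only a plan (``analyse the rotations \dots{} a short case analysis yields a contradiction''), not a proof. Your preliminary fact (i) is also misjustified: a chord of $C_P$ drawn inside $\Delta$ could join two vertices at distance three on $C_P$ (this respects the bipartition and creates two quadrilateral faces), so parity does not rule it out. I recommend discarding the case analysis entirely and arguing directly from the construction of $G$ as an expanded \plfnof-graph, as above.
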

\begin{proof}
Let $G_0$ be the \plfnof-graph from which $G$ is obtained by replacing vertices with patches.  Let $v$ be the vertex of $G_0$ that
is replaced by $P$ and let $x$, $y$ and $z$ be the neighbors of $v$.  All vertices of $V(P)\setminus \{x,y,z\}$ only have
neighbors in $P$.  Each of $x$, $y$ and $z$ has degree at least  $3$  in $G_0$, and thus each of them is incident with
at least two edges that are not incident with $v$.  So, each of $x$, $y$ and $z$ is incident with at least two edges
of $G$ that do not belong to $P$.
\end{proof}

\begin{claim}\label{cl-facelen}
Every face of an expanded \plfnof-graph $G$ has length $3$, $4$ or $5$, and every $4$-face of $G$ belongs to a patch.
\end{claim}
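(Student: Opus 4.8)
The plan is to handle separately the faces of $G$ drawn inside a patch and those that are not. By the definition of a patch, every face contained in a patch $P$ has length $4$ and belongs to $P$, so that case is already done, and it remains to understand the faces of $G$ not inside any patch.

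First I would observe that such faces are essentially faces of the underlying \plfnof-graph. Let $G_0$ be the \plfnof-graph from which $G$ is obtained by deleting some pairwise non-adjacent $3$-vertices and inserting a patch in place of each. When a $3$-vertex $v$ with neighbours $x,y,z$ is replaced by a patch $P$ with boundary $C_P=xz'yx'zy'$, each of the three faces of $G_0$ incident with $v$ (these are bounded by cycles and are pairwise distinct, since $G_0$, being $4$-critical, is $2$-connected) has $v$ replaced by exactly one of $x'$, $y'$, $z'$; as the remaining neighbours of $x'$, $y'$, $z'$ all lie strictly inside $P$, no new face outside $P$ is created and the length of each of these three faces is unchanged. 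Hence every face of $G$ not inside a patch has the same length as a face of $G_0$, and the problem reduces to showing that \emph{every \plfnof-graph has exactly four faces of length $3$ and all its other faces of length exactly $5$}.

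For this I would use an averaging argument. Since $G_0$ is $4$-critical, it is $2$-connected and has no separating clique, so each of its (exactly) four triangles bounds a face and every face is bounded by a cycle; as $G_0$ has no $4$-face, $G_0$ has exactly four triangular faces and every other face has length at least $5$. Writing $n=|V(G_0)|$, the proof of Theorems~\ref{thm-or} and~\ref{thm-tw} shows that $G_0$ is $4,4$-Ore, so $|E(G_0)|=\tfrac{5n-2}{3}$ by Theorem~\ref{thm-edges}; Euler's formula then gives that $G_0$ has $\tfrac{2n+4}{3}$ faces with face lengths summing to $2|E(G_0)|=\tfrac{10n-4}{3}$. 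Removing the contribution $12$ of the four triangular faces, the $\tfrac{2(n-4)}{3}$ non-triangular faces have lengths summing to $\tfrac{10(n-4)}{3}$, i.e. average exactly $5$; being each at least $5$, they are all exactly $5$ (for $n=4$ we have $G_0=K_4$ and there are no non-triangular faces). Combining the two parts, every face of $G$ has length $3$, $4$ or $5$, and the $4$-faces are exactly the faces drawn inside patches, hence belong to patches.

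The bulk of the write-up is routine bookkeeping about the surgery; the only real content is the averaging computation, which rules out faces of length exceeding $5$ in $G_0$. The step I expect to need the most care is the claim that replacing a $3$-vertex by a patch preserves the lengths of the three incident faces: this uses both that the deleted vertices are pairwise non-adjacent (so none of these faces is bounded by an edge joining two of them) and that $x'$, $y'$, $z'$ have all their other neighbours inside the patch, so each contributes a single corner to the respective outer face.
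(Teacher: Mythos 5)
Your proof is correct, but it reaches the key intermediate fact --- that every face of the underlying \plfnof-graph $G_0$ has length $3$ or $5$ --- by a different route than the paper. The paper simply invokes Theorem~\ref{thm-tw} ($G_0\in\twg\cup\twg_1\cup\twg_2$) and reads the face lengths off the explicit description of those families; the surgery step (replacing a $3$-vertex by a patch turns each of the three incident faces into a face of $G$ of the same length) is stated there in one sentence, exactly as you argue it. You instead run the Euler-formula count to its equality case: since $G_0$ is $4$-Ore, Theorem~\ref{thm-edges} forces $|E(G_0)|=\frac{5n-2}{3}$, and combined with the fact that the four triangles bound four distinct faces (this is Claim~\ref{cl-septri}, whose proof is independent of the present claim, so there is no circularity) and that there are no $4$-faces, the average length of the non-triangular faces is exactly $5$, whence all of them have length exactly $5$. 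This is a legitimate alternative: it avoids any appeal to the structural classification of Theorem~\ref{thm-tw} and only uses the tightness of the edge bound, which is precisely the computation already carried out in the proof of Theorems~\ref{thm-or} and~\ref{thm-tw}; the price is that you must separately justify that every triangle bounds a face and that faces are bounded by cycles, both of which you correctly attribute to $2$-connectivity and the absence of separating triangles in a $4$-critical plane graph. The remaining bookkeeping (distinctness of the three faces at a $3$-vertex, and the corners at $x'$, $y'$, $z'$ each contributing one outer face of unchanged length) is sound and matches the paper's intent.
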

\begin{proof}
Let $G_0$ be the \plfnof-graph from which $G$ is obtained by replacing vertices with patches.
By Theorem~\ref{thm-tw}, $G_0$ belongs to $\twg\cup\twg_1\cup\twg_2$, and thus each face of $G_0$
has length $3$ or $5$.  Furthermore, observe that replacing a $3$-vertex $v$ by a patch transforms
each face of $G_0$ incident with $v$ to a face of $G$ of the same length, whose boundary
shares a path of length two with the boundary cycle of $P$.  Therefore, every face of $G$ which
is not contained in a patch has length $3$ or $5$.
\end{proof}

Furthermore, we will use the following simple property of critical graphs.
\begin{claim}\label{cl-septri}
If $G$ is a $4$-critical graph and $T$ is a triangle of $G$, then $G-V(T)$ is connected.
In particular, if $G$ is a plane graph, then every triangle in $G$ bounds a face.
\end{claim}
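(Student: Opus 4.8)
The plan is to derive both assertions from the classical fact that a $k$-critical graph has no clique cutset, instantiated with $k=4$ and the clique $V(T)$, and then to read off the planar statement by a short topological argument.

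First I would prove that $G-V(T)$ is connected. Suppose not. Recording at the outset that a $4$-critical graph is connected and has at least four vertices, the disconnectedness assumption gives a partition of $V(G-V(T))$ into two nonempty sets $V_1$ and $V_2$ with no edge of $G$ between them. Write $T=xyz$ and set $H_i=G[V_i\cup\{x,y,z\}]$ for $i=1,2$; each $H_i$ is a proper subgraph of $G$ (the complementary part being nonempty), hence $3$-colorable by $4$-criticality, say via $\varphi_i$. Since $\{x,y,z\}$ induces the triangle $T$, the vertices $x,y,z$ receive three distinct colors in each $\varphi_i$, so after permuting the colors of $\varphi_2$ we may assume $\varphi_1$ and $\varphi_2$ agree on $\{x,y,z\}$; then $\varphi_1\cup\varphi_2$ is a proper $3$-coloring of $G=H_1\cup H_2$, contradicting that $G$ is not $3$-colorable.

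For the planar consequence, I would argue topologically. If some triangle $T$ of a plane $4$-critical graph $G$ did not bound a face, then $T$, viewed as a closed curve, would separate the plane into two open regions, each of which contains a vertex of $G$ in its interior — here one uses that $G$ is simple, so a region is never "witnessed" merely by a chord of $T$ (the three edges of $T$ already lie on the curve). Any path of $G$ joining an interior vertex to an exterior vertex is a curve that must cross $T$, hence passes through $V(T)$; so $G-V(T)$ would be disconnected, contradicting the first part. Therefore $T$ bounds a face.

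I do not expect a genuine obstacle. The one point that needs a little care is the topological step — verifying that "$T$ does not bound a face" really forces vertices strictly on both sides of $T$ (which is where simplicity of $G$ enters) — together with the preliminary observations that a $4$-critical graph is connected and has at least four vertices, so that the separation used in the first part is into two honestly nonempty parts.
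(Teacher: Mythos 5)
Your proof is correct and follows essentially the same route as the paper: split $G$ along the clique $V(T)$ into two proper subgraphs meeting in $T$, $3$-color each by criticality, and permute colors to glue. The paper leaves the planar ``in particular'' implicit, whereas you spell out the (correct) topological step; this is just added detail, not a different approach.
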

\begin{proof}
If $G-V(T)$ is not connected, then there exist proper subgraphs $G_1$ and $G_2$ of $G$
such that $G_1\cup G_2=G$ and $G_1\cap G_2=T$.  Since $G$ is $4$-critical, both $G_1$ and $G_2$ are $3$-colorable
and by permuting the colors if necessary, we can assume that their $3$-colorings match on $T$.
Together, they would give a $3$-coloring of $G$, which is a contradiction.
\end{proof}

A \emph{stretching} of a plane graph $G$ at a vertex $w\in V(G)$ is a graph $G_2$ obtained from $G$ by the following procedure.
Let $e_1,\ldots, e_k$ be the edges incident with $w$ as drawn in the clockwise order around it.
Choose $m<k$ and let $G'$ be obtained from $G$ by removing $w$, adding two new vertices $w_1$ and $w_2$
and adding edges between $w_1$ and the endpoints of $e_1$, \ldots, $e_m$ distinct from $w$, and between
$w_2$ and the endpoints of $e_{m+1}$, \ldots, $e_k$ distinct from $w$.  Let $G_1$ be obtained from $G'$
by either adding a new vertex $z$ adjacent to $w_1$ and $w_2$, or by adding an edge between $w_2$ and the
endpoint $z$ of $e_1$ distinct from $w$.  Finally, for each face $f$ of $G_1$ incident with $zw_2$,
if $|f|=6$, then replace $f$ by a quadrangulation, and if $|f|=7$, then replace it by a graph satisfying (a), (b) or (c)
of Theorem~\ref{thm-7cyc}, resulting in the graph $G_2$.  We call $G_1$ the \emph{intermediate graph} of the stretching, and the faces of $G_1$
incident with $zw_2$ are called \emph{special}.

\begin{lemma}\label{lemma-main}
Every \plf-graph is an expanded \plfnof-graph.
\end{lemma}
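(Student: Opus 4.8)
The plan is to induct on $|V(G)|$, where $G$ is a \plf-graph. If $G$ has no $4$-face, then by Theorem~\ref{thm-tw} it is already a \plfnof-graph, hence an expanded \plfnof-graph with no vertex replaced, and we are done. So suppose $G$ has a $4$-face. The goal of the inductive step is to locate a patch $P$ in $G$, with boundary the $6$-cycle $C_P=xz'yx'zy'$ and with $x'$, $y'$, $z'$ the three vertices of $C_P$ all of whose neighbors lie in $P$, so that the plane graph $G_0$ obtained from $G$ by deleting $V(P)\setminus\{x,y,z\}$ and adding a new vertex $v$ adjacent to $x$, $y$, $z$ is again a \plf-graph. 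Since $P$ contains $x',y',z'\notin V(G_0)$, this reduction strictly decreases $|V|$; granting it, $G$ is obtained from $G_0$ by expanding the $3$-vertex $v$, and the induction hypothesis plus a little bookkeeping (third paragraph) will finish the proof.

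I expect the verification that $G_0$ is a \plf-graph to be routine given the tools at hand. First, $G_0$ is not $3$-colorable: in any $3$-coloring $\theta$ of $G_0$ the colors $\theta(x),\theta(y),\theta(z)$ are not all distinct (since $v$ is adjacent to all three), so by Theorem~\ref{thm-gimbelext} $\theta|_{\{x,y,z\}}$ extends to a $3$-coloring of $P$ — extend it first to $V(C_P)$, coloring the vertex of $C_P$ adjacent to the two equally colored vertices of $\{x,y,z\}$ so as to differ from the color of the third, which rules out the obstruction of Theorem~\ref{thm-gimbelext} — hence $\theta$ would extend to a $3$-coloring of $G$, a contradiction. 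Next, $G_0-e$ is $3$-colorable for every edge $e$: if $e$ is incident with $v$, $3$-color the proper subgraph $G_0-v$ of $G$ and extend greedily to $v$ (which has degree at most $2$); if $e$ is not incident with $v$, take a $3$-coloring $\psi$ of $G-e$, note that if $\psi(x),\psi(y),\psi(z)$ were all distinct then $\psi$ would realize the forbidden pattern on $C_P$, contradicting that $\psi$ properly $3$-colors $P$ (Theorem~\ref{thm-gimbelext}), and therefore $\psi|_{G_0-v}$ extends to $v$. Thus $G_0$ is $4$-critical, and applying Lemma~\ref{lemma-pcr} with $G_0$ in the role of ``$G$'' and our $G$ in the role of ``$G'$'' shows that $P$ is a critical patch and $t(G_0)=t(G)=4$, so $G_0$ is a \plf-graph.

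To conclude, by the induction hypothesis $G_0$ is obtained from a \plfnof-graph $G_{00}$ by replacing pairwise non-adjacent $3$-vertices $v_1,\dots,v_k$ with critical patches $P_1,\dots,P_k$. If $v$ lies in $V(P_j)\setminus\{x_j,y_j,z_j\}$ for some $j$, then $P$ may be absorbed into $P_j$: replacing the relevant $3$-vertex of $G_{00}$ by the patch obtained from $P_j$ by expanding its copy of $v$ into $P$ is again a critical patch (using the observation from the proof of Claim~\ref{cl-facelen} that expanding a $3$-vertex preserves the lengths of incident faces), so $G$ is still an expanded \plfnof-graph of $G_{00}$. Otherwise $v$ is a vertex of $G_{00}$; its neighbors $x,y,z$ survive every collapse $P_j\to v_j$ (shared boundary vertices persist and interior vertices of $P_j$ have no neighbors outside $P_j$), so $v$ has degree $3$ in $G_{00}$ and is non-adjacent there to each $v_j$ (as $v_j\notin V(G_0)$). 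Hence $\{v,v_1,\dots,v_k\}$ are pairwise non-adjacent $3$-vertices of $G_{00}$, and since expansions at non-adjacent $3$-vertices commute, $G$ is obtained from $G_{00}$ by replacing these vertices with the critical patches $P,P_1,\dots,P_k$.

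The main obstacle is the existence of the patch $P$ when $G$ has a $4$-face. My approach would be to take an inclusion-maximal closed disc $\Delta$ bounded by a cycle $C$ of $G$ that is not a face and all of whose internal faces have length $4$ (such a $\Delta$ exists, e.g.\ from two $4$-faces sharing an edge, after ruling out a ``solitary'' $4$-face via Lemma~\ref{lemma-critin} and Theorem~\ref{thm-gimbel} together with the fact that all four triangles of $G$ are faces by Claim~\ref{cl-septri} and hence cannot lie inside $\Delta$), and then to analyze the subgraph $H$ drawn outside $\Delta$, which is $C$-critical by Lemma~\ref{lemma-critin}. When $H$ is triangle-free, Theorem~\ref{thm-gimbel} forces $|C|=6$, forces the internal faces of $H$ meeting $C$ to have length $4$, and forbids a chord of $C$ in $H$ — exactly what is needed to see that $C$ bounds a patch; when some of the four triangles of $G$ sit in $H$ near $C$, one must instead invoke Theorems~\ref{thm-aksen} and~\ref{thm-7cyc} and Fact~\ref{f-5f} together with the maximality of $\Delta$ to control the interface between $\Delta$ and $H$. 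The stretching operation defined above is precisely the device recording the finitely many admissible shapes of this interface (the special faces of length $6$ and $7$ matching the configurations of Theorems~\ref{thm-gimbel} and~\ref{thm-7cyc}). The genuinely delicate points, I expect, are (i) guaranteeing that the enclosing cycle is a true $6$-cycle with the correct alternation of ``interior'' and ``attachment'' vertices rather than a shorter or chorded cycle, and (ii) handling $4$-faces adjacent to triangles or $5$-faces, where $\Delta$ may have to be enlarged across the interface before a patch becomes visible.
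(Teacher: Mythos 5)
Your outer skeleton---induct on $|V(G)|$, and in the inductive step locate a patch $P$, contract it to a $3$-vertex $v$ to get a smaller graph $G_0$, verify $G_0$ is a \plf{}-graph, and recurse---is sound in outline, and your verification that $G_0$ is $4$-critical with $t(G_0)=t(G)=4$ is correct (it mirrors the argument inside Lemma~\ref{lemma-pcr}). But the step you defer to the last paragraph, namely that every \plf{}-graph with a $4$-face contains a patch, is essentially the entire content of the lemma, and your sketch of it does not go through. First, the case ``$H$ is triangle-free'' is vacuous: by Claim~\ref{cl-septri} all four triangles of $G$ bound faces, none of which lies in $\Delta$ (whose faces all have length $4$), so all four triangles lie in the exterior $H$. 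In the only remaining case, none of the available extension theorems applies to $H$: Theorem~\ref{thm-gimbel} requires $H$ triangle-free, Theorem~\ref{thm-aksen} requires $|C|\le 5$ and exactly one triangle, and Theorem~\ref{thm-7cyc} requires $H$ triangle-free with $|C|=7$; there is no cited characterization of $C$-critical plane graphs with a $6$-cycle boundary and four triangles. Second, even granting $|C|=6$ (which does follow by applying Lemma~\ref{lemma-critin} and Theorem~\ref{thm-gimbel} to the \emph{interior}), the conclusion ``the internal faces of $H$ meeting $C$ have length $4$'' is not what a patch requires: you need three \emph{alternating} vertices of $C$ all of whose neighbors lie in $\Delta$, and inclusion-maximality of $\Delta$ gives no control over this (enlarging $\Delta$ across a $4$-face of $H$ need not even produce a region bounded by a cycle). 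You candidly flag both of these as ``delicate points,'' but they are exactly where the lemma lives.

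The paper's proof is organized quite differently and this difference matters. It takes a minimal counterexample $G$ (so $G$ contains \emph{no} patches), establishes a suite of structural claims (no separating $4$-cycles, strong restrictions on $6$-cycles bounding quadrangulated regions and on separating $5$-cycles, exclusion of the diamond and Havel-type configurations), and then, rather than contracting a patch, identifies two \emph{opposite vertices of a $4$-face}. The resulting smaller non-$3$-colorable graph has a $4$-critical subgraph which, by minimality, is an expanded \plfnof{}-graph; the interface between it and $G$ is encoded by the ``stretching'' operation, and Theorems~\ref{thm-gimbel}, \ref{thm-7cyc} and Fact~\ref{f-5f} are then applicable because the special faces have length $6$ or $7$ and are triangle-free. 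The argument closes with a case analysis on three explicit intermediate graphs. Crucially, several of these steps apply the minimality hypothesis to auxiliary graphs obtained by identifying vertices or deleting a triangle and adding an edge---reductions that are not available inside your cleaner ``contract a patch'' induction. To salvage your approach you would need to supply, from scratch, a proof that a patch exists, and I see no route to that which does not reproduce most of the paper's machinery.
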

\begin{proof}
Assume that $G$ has the fewest vertices among the
 \plf-graphs that are not  expanded \plfnof-graphs.
  Then $G$ does not contain any patches, since replacing
a patch by a 3-vertex would give a smaller counterexample.  We are going to need the following stronger claim.

\claimz{cl-no6}{Let $C$ be a $6$-cycle in $G$ and let $\Delta$ be an open region of the plane bounded by $C$, such that all faces in $\Delta$ have length four.
Then $\Delta$ contains at most one vertex, and if it contains a vertex, then each vertex of $C$ is incident with an edge that is not drawn in the closure of $\Delta$.}
\begin{proof}
Let $C=v_1v_2v_3v_4v_5v_6$ and suppose that $\Delta$ contains at least one vertex, and if it contains only one, then all edges incident with $v_6$ are drawn in the closure of $\Delta$.
Let $G'$ be the graph obtained from $G$ by removing the vertices in $\Delta$ (which does not include $C$) and adding a vertex $v$ adjacent to $v_1$, $v_3$ and $v_5$.  By Theorem~\ref{thm-gimbelext},
$G'$ is not $3$-colorable, since every $3$-coloring of $C$ in which $v_1$, $v_3$ and $v_5$ do not have pairwise distinct colors can be extended to the subgraph of $G$ drawn in the closure of $\Delta$.
Therefore, $G'$ has a $4$-critical subgraph $G''$.  Note that $|V(G'')|<|V(G)|$, since 
if $\Delta$ contains only one vertex of $G$, then $v_6$ has degree $2$ in $G'$, and thus $v_6\not\in V(G'')$.

Since both $G$ and $G''$ are $4$-critical, $G''$ is not a proper subgraph of $G$.
Hence $v$ and all edges incident to it belong to $G''$.
 Triangles in $G''$ that are not in $G$ can only be created by adding  $v$ to an edge, say $v_1v_3$; in this case, $v_1v_2v_3$ is a triangle in $G$ which bounds
a face  (by Claim~\ref{cl-septri}), and thus $v_2$ has degree two in $G'$ and does not belong to $G''$, which means that creating a new triangle
in $G''$ destroys another triangle.
Thus  
$t(G'')\le t(G)=4$. 
Since 
 $G''$ is not $3$-colorable, it contains exactly four triangles. 
By the minimality of $G$, the graph $G''$ is an expanded \plfnof-graph and by Claim~\ref{cl-facelen}, each face of $G''$ has length at most $5$.
Let $G_1$ be the graph obtained from $G''-v$ by adding the subgraph $H$ of $G$ contained in the closure of $\Delta$.
Then $G_1$ is a subgraph of $G$.

If at least two faces of $G''$ incident with $v$ have length $4$, then $v$ is a vertex of a patch
$P''$ of $G''$ and all neighbors of $v$ belong to $P''$.
We claim that $P=(P''-v)+H$ is a patch; this is clear if $v$ is incident with three $4$-faces in $G''$.  If $v$ is incident with exactly two $4$-faces, then let $f$ be the face incident with $v$
of length other than four.  Note that at least one edge $e$ of $C$ does not belong to $G''$ and in $G$, it is drawn in the region of the plane corresponding to $f$.  By symmetry, we can assume that $e=v_1v_6$.
Since all faces of $G''$ have length at most $5$, if $v_6\in V(G'')$, then $v_1$ and $v_6$ would be joined by a path $Q$ of length two in $G''$.
The path $Q$ together with the edge $v_1v_6$ would form a triangle in $G$ which does not correspond to any triangle in $G''$, and thus $G''$ would only have at most three triangles, which is a contradiction.
Therefore, $v_6\not\in V(G'')$, and thus all neighbors of $v_6$ in $G_1$ belong to $P$, showing that $P$ is a patch.
Consequently, $G_1$ is an expanded \plfnof-graph.

Hence, we can assume that at most one face of $G''$ incident with $v$ has length four.  If exactly one face incident with $v$ had
length $4$, then $v$ would belong to a patch $P$ and it would be incident with exactly one edge not belonging to $P$, contradicting Claim~\ref{cl-patch},
Therefore, no face incident with $v$ has length four, and as in the previous paragraph, we conclude that $H$ is a patch in $G_1$ that replaces the vertex $v$ of degree  $3$  of $G''$.
Again, it follows that $G_1$ is an expanded \plfnof-graph.

Since $G_1$ is not $3$-colorable and  $G$ is $4$-critical,  $G=G_1$ and $G$ is an expanded \plfnof-graph.  This is a contradiction.
\end{proof}

\claimz{cl-4cycle}{$G$ does not contain separating 4-cycles.}
\begin{proof}
Suppose that $C=v_1v_2v_3v_4$ is a separating $4$-cycle in $G$.  Let $\Lambda_1$ and $\Lambda_2$ be the
two connected regions obtained from the plane by removing $C$.  For $i\in\{1,2\}$, let $G_i$ be the
subgraph of $G$ drawn in the closure of $\Lambda_i$.  By Lemma~\ref{lemma-critin}, $G_i$ is $C$-critical,
and by Theorem~\ref{thm-aksen}, $G_i$ contains at least two triangles.  Since $t(G)=4$, we
conclude that $t(G_1)=t(G_2)=2$.

For $i,j\in\{1,2\}$, let $G_{i,j}$ be the graph obtained from $G_i$ by adding the edge $v_jv_{j+2}$.
Note that $C+v_jv_{j+2}$ has a unique $3$-coloring up to permutation of colors.  Therefore,
any $3$-colorings of $G_{1,j}$ and $G_{2,j}$ could be combined to a $3$-coloring of $G$.
We conclude that at least one of $G_{1,j}$ and $G_{2,j}$ is not $3$-colorable.
By symmetry, we can assume that $G_{1,1}$ is not $3$-colorable.

Let $G'_{1,1}$ be a $4$-critical
subgraph of $G_{1,1}$.  By Claim~\ref{cl-septri}, each triangle in $G'_{1,1}$ bounds a face,
and thus $v_1v_3$ belongs to at most two triangles of $G'_{1,1}$.  Since $t(G_1)=2$, it follows
that $t(G'_{1,1})\le 4$.  Since $G'_{1,1}$ is $4$-critical, it follows that $t(G'_{1,1})=4$, i.e., $G'_{1,1}$ is a \plf-graph,
and that $v_1v_3$ belongs to two triangles of $G'_{1,1}$.  Let $v_1v'_2v_3$ and $v_1v'_4v_3$ be the triangles
incident with $v_1v_3$ labelled so that for $k\in\{2,4\}$, $v_k$ is either equal to $v'_k$ or
it is drawn in $G_{1,1}$ in the region of the plane corresponding to the face of $G'_{1,1}$ bounded by $v_1v'_kv_2$.

Since $C$ is a separating cycle, $|V(G'_{1,1})|<|V(G)|$, and by the minimality of $G$,
it follows that $G'_{1,1}$ is an expanded \plfnof-graph.  By Claim~\ref{cl-facelen},
every face of $G'_{1,1}$ has length at most five.  Consider a face $f$ of $G'_{1,1}$ not incident with $v_1v_3$.
Since the subgraph of $G$ drawn in the closure of $f$ contains no triangles,  by Lemma~\ref{lemma-critin} and Theorem~\ref{thm-gimbel}, 
 $f$ is a face of $G$ as well.  Furthermore, if $v_k\neq v'_k$ for some $k\in\{2,4\}$, then
Lemma~\ref{lemma-critin} and Theorem~\ref{thm-gimbel} imply that $v_1v'_kv_3v_k$
is a face of $G$ and $v_k$ has degree two in $G_{1,1}$.  It follows that $V(G_{1,1})\setminus V(G'_{1,1})\subseteq\{v_2,v_4\}$
and each vertex $v\in V(G_{1,1})\setminus V(G'_{1,1})$ is  adjacent  in $G_{1,1}$ only to $v_1$ and $v_3$.

Suppose that $G'_{1,1}$ has a $4$-face.  In that case, it contains a patch $P$.  Let $x$, $y$ and $z$ be the vertices of the boundary cycle $K$
of $P$ that have neighbors only in $P$.  Note that $v_1v_3\not\in E(K)$, since both faces incident
with $v_1v_3$ are triangles.  Furthermore, at most two of $x$, $y$ and $z$ are incident with $C$,
since they form an independent set.  Therefore, we can assume that all edges incident with $x$ in $G$ belong to $P$.
This contradicts (\ref{cl-no6}).  We conclude that $G'_{1,1}$ has no $4$-faces, and thus
it belongs to $\twg\cup\twg_1\cup\twg_2$ by Theorem~\ref{thm-tw}.

Consequently, both vertices of the diamond edge $v_1v_3$ of $G'_{1,1}$ have degree exactly  $3$  in $G'_{1,1}$.
Since $G'_{1,1}$ is $4$-critical,  there exists a $3$-coloring $\varphi$ of $G'_{1,1}-\{v_1,v_3\}$
and  $\varphi(v'_2)\neq \varphi(v'_4)$.  Set $\varphi(v_k)=\varphi(v'_k)$ for $k\in\{2,4\}$ and note that
$\varphi$ is a proper $3$-coloring of $G_{1,2}$.  Since $G$ is not $3$-colorable, it follows that $G_{2,2}$
is not $3$-colorable.  By an argument symmetrical to the one for $G_{1,1}$, we conclude that
$G_{2,2}$ contains a \plfnof-graph $G'_{2,2}$ as a subgraph such that
$V(G_{2,2})\setminus V(G'_{2,2})\subseteq\{v_1,v_3\}$
and each vertex $v\in V(G_{2,2})\setminus V(G'_{2,2})$ is only adjacent to $v_2$ and $v_4$ in $G_{2,2}$.

Suppose that $G'_{1,1}\neq G_{1,1}$, and thus say $v_2\in V(G_{1,1})\setminus V(G'_{1,1})$ is  adjacent  in $G_1$
only to $v_1$ and $v_3$.
Since $v_2v_4$ is a diamond edge of $G'_{2,2}$, vertex $v_2$ has degree  $3$  in $G'_{2,2}$.
If $G'_{2,2}=G_{2,2}$, this would imply that $v_1$ and $v_3$ are the only neighbors of $v_2$ in $G_2$,
and thus $v_2$ would have degree two in $G$, contrary to the assumption that $G$ is $4$-critical.
Hence, we can assume that say $v_1\in V(G_{2,2})\setminus V(G'_{2,2})$ is  adjacent  in $G_2$ only  to $v_2$ and $v_4$.
Since $G'_{1,1}$ and $G'_{2,2}$ are $4$-critical, there exist $3$-colorings $\varphi_1$ of $G'_{1,1}-v_1v_3$
and $\varphi_2$ of $G'_{2,2}-v_2v_4$ such that $\varphi_1(v_1)=\varphi_1(v_3)=1$, $\varphi_1(v'_2)=2$, $\varphi_1(v'_4)=3$,
$\varphi_2(v_2)=\varphi_2(v_4)=3$, $\varphi_2(v'_1)=2$ and $\varphi_2(v'_3)=1$.  Then each vertex of $G$ is colored
by $\varphi_1$ or $\varphi_2$ and  if a vertex ($v_3$ or $v_4$) belongs to both $G'_{1,1}$ and $G'_{2,2}$,
then it is assigned the same color by $\varphi_1$ and $\varphi_2$.  Thus the union of $\varphi_1$ and $\varphi_2$
gives a $3$-coloring of $G$, which is a contradiction.

Therefore, $G'_{1,1}=G_{1,1}$, and by symmetry, $G'_{2,2}=G_{2,2}$.  It follows that $G$ has no $4$-face,
and thus $G$ is a \plfnof-graph.  This is a contradiction.
\end{proof}

\claimz{cl-5cyc}{If $K$ is a separating $5$-cycle in $G$ and $\Delta$ is a region of the plane bounded by $K$ such that
$\Delta$ contains at most one triangle, then $\Delta$ contains exactly one vertex, which has  three  neighbors in $K$.}
\begin{proof}
By Theorem~\ref{thm-gimbel} and the criticality of $G$,  $\Delta$ contains a triangle $T$,
as $\Delta$ is not a face of $G$.  Since $K$ is separating, Theorem~\ref{thm-aksen} implies that
$T$ shares exactly one edge with $K$.  Let $\{z\}=V(T)\setminus V(K)$.  Note that $T\cup K$ contains a $6$-cycle,
bounding an open region $\Delta_0\subset\Delta$.  Since all edges incident with $z$ are drawn in the closure of $\Delta_0$,
\refclaim{cl-no6} implies that $\Delta_0$ contains no vertices, and thus $z$ is the only vertex in $\Delta$.
\end{proof}

\claimz{cl-nodiam}{The following configuration does not appear in $G$: a path $z_1z_2z_3$ and a vertex $z$ of degree  $3$  adjacent
to $z_1$, $z_2$ and $z_3$.}
\begin{proof}
Since $G$ does not contain separating triangles, if $z_1$ is adjacent to $z_3$, then  $G=K_4$.  This is a contradiction,
since $G$ is not a \plfnof-graph.

Let $G'$ be the graph obtained from $G-z$ by identifying $z_1$ with $z_3$ to a new vertex $u$.   Since $G$ is not $3$-colorable,
$G'$ also is not $3$-colorable.  Let $G''$ be a $4$-critical subgraph of $G'$.  Note that $G''$ has at least four triangles,
and since the triangles $zz_1z_2$ and $zz_2z_3$ disappear during the construction of $G'$, we conclude that $G''$ contains
at least two triangles $ux_1x_2$ and $uy_1y_2$ such that $z_1x_1x_2z_3$ and $z_1y_1y_2z_3$ are (not necessarily disjoint)
paths between $z_1$ and $z_3$ in $G-\{z,z_2\}$.  Since $G''$ contains no separating triangles by Claim~\ref{cl-septri},
we conclude that $G''$ contains exactly two such triangles, and thus $t(G'')=4$.  Let $\Delta_1$ be the open disk
bounded by $z_1zz_3x_2x_1$ in $G$ corresponding to the face $ux_1x_2$ of $G'$, and let $\Delta_2$ be the open disk
bounded by $z_1z_2z_3y_2y_1$ in $G$ corresponding to the face $uy_1y_2$ of $G'$.  By swapping the labels of $x_i$ and $y_i$
(for $i\in\{1,2\}$) if necessary, we can assume that $\Delta_1$ and $\Delta_2$ are disjoint.  Note that since $t(G'')=4$,
neither $\Delta_1$ nor $\Delta_2$ contains a triangle of $G$, and thus $\Delta_1$ and $\Delta_2$ are faces of $G$ by
\refclaim{cl-5cyc}.

By the minimality of $G$, it follows that $G''$ is an expanded \plfnof-graph, and all faces of $G''$ have length at most $5$.
By Lemma~\ref{lemma-critin} and Theorem~\ref{thm-gimbel}, all faces of $G''$ other than $ux_1x_2$ and $uy_1y_2$ are also faces of $G$.
In particular, since $G$ does not contain patches, $G''$ also does not contain patches, and thus $G''$ has no $4$-faces.
It follows that $G$ has no $4$-faces.  Therefore, $G$ is a \plfnof-graph, which is a contradiction.
\end{proof}

\claimz{cl-nohavel}{The following configuration does not appear in $G$: a triangle $T=z_1z_2z_3$ such that all vertices of $T$ have degree  $3$ 
and $z_3$ is adjacent to a vertex $x_3$ distinct from $z_1$ and $z_2$ that has degree  $3$  and belongs to a triangle.}
\begin{proof}
Let $x_1$ and $x_2$ be the neighbors of $z_1$ and  $z_2$, respectively, outside of $T$.  By~\refclaim{cl-nodiam}, we have $x_1\neq x_2\neq x_3\neq x_1$.
Furthermore, the vertices $x_1$, $x_2$ and $x_3$ form an independent set in $G$, as otherwise every $3$-coloring of $G-V(T)$ would extend
to $G$, contrary to the $4$-criticality of $G$.  Let $G'$ be the graph obtained from $G-V(T)-x_3$ by adding the edge $x_1x_2$.
Note that every $3$-coloring of $G'$ extends to a $3$-coloring of $G$, and thus $G'$ is not $3$-colorable.  Let $G''$ be a $4$-critical
subgraph of $G$.  Note that $G''$ contains at least four triangles, and since $T$ as well as the triangle incident with $x_3$
disappear during the construction of $G'$, it follows that $x_1x_2$ belongs to at least two triangles in $G''$.  By Claim~\ref{cl-septri},
$x_1x_2$ belongs to exactly two triangles, each of them bounding a face of $G''$.  Therefore, $G$ contains a $4$-cycle $x_1ux_2v$ separating
two of its triangles from $T$.  By \refclaim{cl-4cycle}, it follows that $u$ is adjacent to $v$.  By \refclaim{cl-5cyc},
either $x_1z_1z_2x_2u$ or $x_1z_1z_2x_2v$ bounds a face, and thus $u$ or $v$ has degree  $3$ .  This contradicts~\refclaim{cl-nodiam}.
\end{proof}

Let $v_1v_2v_3v_4$ be a $4$-face in $G$, if possible chosen so that it contains two adjacent vertices that are only
incident with $4$-faces.  Since $G$ contains no separating triangles, $v_1v_3$ and $v_2v_4$ are not edges.
Suppose that $v_1$ and $v_3$ are joined by a path $v_1x_1x_2v_3\subset G-\{v_2,v_4\}$ of length  $3$ , and that $v_2$ and $v_4$
are joined by a path $v_2y_1y_2v_4\subset G-\{v_1,v_3\}$ of length  $3$ .  By symmetry and planarity, we can assume that $x_1=y_1$.
If both $v_1$ and $v_2$ have degree $3$, then every $3$-coloring of $G-\{v_1,v_2\}$ extends
to $G$, contrary to the assumption that $G$ is $4$-critical.  By symmetry, $v_1$ has degree at least $4$, and
since $v_4y_2y_1v_1$ is not a separating $4$-cycle, $v_1$ is adjacent to $y_2$.  By~\refclaim{cl-5cyc} applied to the $5$-cycle $v_4v_1v_2y_1y_2$,
we have $x_2=y_2$.  But then $V(G)=\{v_1,v_2,v_3,v_4,x_1,x_2\}$ and $G$ is $3$-colorable.

Therefore, we can by symmetry assume that $v_1$ and $v_3$ are not joined by a path of length  $3$  in $G-\{v_2,v_4\}$.  Let $G'$ be the graph obtained
from $G$ by identifying $v_1$ and $v_3$ to a single vertex $w$.  Clearly, $G'$ has exactly the four triangles that originally belonged to $G$ (possibly
with $v_1$ or $v_3$ relabelled to $w$).
Since every $3$-coloring of $G'$ gives a $3$-coloring of $G$, we conclude that $G'$ is not $3$-colorable.  Let $G''$ be a $4$-critical subgraph of $G'$.
By the minimality of $G$,  $G''$ is an expanded \plfnof-graph.
In particular, all faces of $G''$ have length at most $5$.  For a face $f$ of $G''$, let $C_f$ denote the corresponding cycle in $G$
(either equal to $f$ up to relabelling of $w$ to $v_1$ or $v_3$, or obtained from $f$ by replacing $w$ by the path $v_1v_2v_3$).  Since all triangles of $G$ are faces of $G''$,
Theorem~\ref{thm-gimbel} implies that if $|C_f|=|f|$, then $C_f$ is a face of $G$.  Furthermore, if $|C_f|=|f|+2$, then $4\le |f|\le 5$
and $C_f$ does not bound a face of $G$, since $v_2$ has degree at least  $3$ .  Let $G_f$ denote the subgraph of $G$ drawn in the region of the
plane bounded by $C_f$ and corresponding to $f$, and note that $G_f$ is one of the graphs described by Theorem~\ref{thm-gimbel} or Theorem~\ref{thm-7cyc}.
Therefore,  the following holds.

\claimz{cl-stretch}{The graph $G$ is obtained by stretching from an expanded \plfnof-graph $G_0$ at a vertex $w_0$, where $G$ and $G_0$ have the same triangles.}
In particular, all faces of $G$ have length at most $5$.
By the choice of the $4$-face $v_1v_2v_3v_4$, we can also assume that the following property is satisfied.
\claimz{cl-all4}{Let $G_0$ and $w_0$ be as in~\refclaim{cl-stretch}.  Let $v_1v_2v_3$ be the path replacing $w_0$ in the intermediate graph
of the stretching. The path $v_1v_2v_3$ is contained in the boundary of a $4$-face in $G$.  Furthermore,
if $G$ contains an edge whose vertices are only incident with $4$-faces, then $v_1$ is only incident with $4$-faces.}

Let $G_0$ and $w_0$ satisfying~\refclaim{cl-stretch} be chosen so that if $G_0$ contains at least one patch, then~\refclaim{cl-all4} is satisfied,
and subject to that with $|V(G_0)|$ as small as possible.  Observe that $G_0$ 
does not necessarily have to be created from $G$ by contracting a $4$-face.
Let $G_1$ be the intermediate graph of the stretching, let $f_1$ and $h_1$ be its special faces and let $f_0$ and $h_0$ be the corresponding
faces of $G_0$.

\claimz{cl-no4}{The graph $G_0$ has no $4$-faces.}
\begin{proof}
Suppose  that $G_0$ has a $4$-face, and thus it contains a patch $P_0$ bounded by a $6$-cycle
$C_0=z_1z_2z_3z_4z_5z_6$, where $z_2$, $z_4$ and $z_6$ only have neighbors in $P_0$.  Let $r_2$, $r_4$ and $r_6$ be the faces of $G_0$ incident
with $z_2$, $z_4$ and $z_6$, respectively, whose length is not four.  Since $G_0$ is $2$-connected, these faces are pairwise distinct.
If $C_0$ is a $6$-cycle in $G$, then the open region $\Delta$ corresponding to $P_0$
contains only $4$-faces and at least one vertex of $G$ lies inside $\Delta$.  
There exists $k\in\{2,4,6\}$ such that $f_0\neq r_k\neq h_0$, and thus 
all edges incident with $z_k$ in $G$ are drawn in the closure of $\Delta$.
This contradicts~\refclaim{cl-no6}; hence, $C_0$ contains $w_0$ and corresponds to an $8$-cycle $C_1$ in $G_1$ containing the
path $v_1v_2v_3$.  Note that all faces of $G$ drawn in the open region $\Lambda$ of the plane bounded by $C_1$ and corresponding to $P_0$
have length~$4$.  
By symmetry, we can assume that $w_0\in \{z_1,z_2\}$ and $f_0$ is contained in the patch $P_0$.

Suppose that $h_0$ does not share an edge with $C_0$; then, $w_0=z_1$, $r_2$ and $r_6$ are faces of $G$ and $v_1$ is incident with one of them.
Since neither $r_2$ nor $r_6$ is a $4$-face,~\refclaim{cl-all4} implies that $G$ does not contain an edge whose vertices are only incident with $4$-faces.
Hence, $|V(P_0)\setminus V(C_0)|=1$ and the only neighbors of $z_1$ in $P_0$ are $z_2$ and $z_6$.  By~\refclaim{cl-all4}, $G$ contains a $4$-face
$v_1v_2v_3z$.  If $z$ lies inside $\Lambda$, then note that it is adjacent neither to $z_2$ nor to $z_6$, and thus $z$ has a neighbor inside $\Lambda$.
If $z$ lies outside $\Lambda$, then similarly $v_2$ has a neighbor inside $\Lambda$.  In both cases $G$ contains an edge whose vertices are incident
only with $4$-faces,  a contradiction.  Therefore, $h_0$ shares an edge with $C_0$ (we can assume that $h_0=r_2$).
Since the same conclusions would have to hold for any other patch in $G_0$,   the only patch in $G_0$ is  $P_0$.

Label the vertices of $C_1$ so that the vertices of $V(C_0)\setminus \{w_0\}$ retain their labels and $C_1=z_1xyz_2z_3z_4z_5z_6$.
Note that all edges incident with $y$, $z_2$, $z_4$ and $z_6$ in $G_1$ are drawn in the closure of $\Lambda$.
Also, either $x$ is adjacent to $z_1$ in $G_0$ and $|h_1|=|h_0|=5$, or $|h_0|=5$ and $|h_1|=7$ and all edges incident with $x$ are drawn in the closure of $\Lambda$.
Let $h_0=z_1z_2z_3ab$, where $b$ may be equal to $x$. Since $C=z_1baz_3z_4z_5z_6$ is a $7$-cycle in $G$ whose interior (the part of the plane bounded by $C$ and containing $\Lambda$)
does not contain any triangle,  it satisfies (b) or (c) of Theorem~\ref{thm-7cyc}.
Let $G_0'$ be the graph obtained from $G_0$ by replacing the patch $P_0$ with a $3$-vertex $z'$ adjacent to $z_1$, $z_3$ and $z_5$.
Since $G_0$ has only one patch, $G'_0$ has no patches and it is a \plfnof-graph.
Furthermore, $G$ is obtained from $G'_0$ by stretching at vertex $z'$ (split out the edge $z'z_3$, then make its new endvertex adjacent to $z_5$ and relabel
the vertices created from $z'$ by $z_4$ and $z_6$).  
Since $|V(G'_0)| < |V(G_0)|$, this is a contradiction with the choice of $G_0$.
\end{proof}

\begin{figure}
\center{\includegraphics{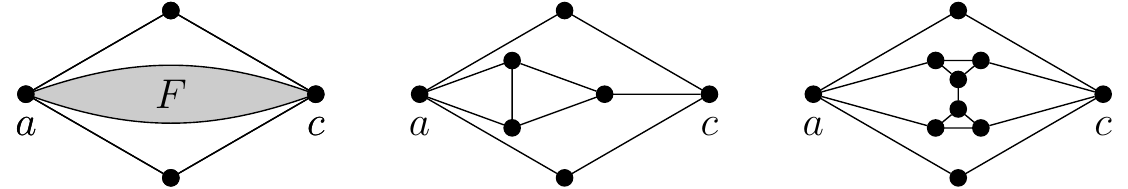}}
\caption{Possibilities for a fragment $F$.}\label{fig-fragment}
\end{figure}

Suppose that $\{a,c\}$ is a vertex-cut in $G_0$ and $F$ is an induced subgraph
of $G_0$ such that all edges of $G_0$ incident with $V(F)\setminus\{a,c\}$ belong to $F$.
We say that $F$ is a \emph{fragment with attachments $a$ and $c$} if $F$ either isomorphic
to the Havel's quasiedge $H_0$ and $a$ and $c$ are its vertices of degree two, or
if $F$ consists of a $4$-cycle $au_1u_2u_3$ and edges $u_1u_3$ and $u_2c$.
See Figure~\ref{fig-fragment} for an illustration.
Since all triangles of $G_0$ belong to $G$ and stretching does not preserve size of at least one face, $G_0$ is not $K_4$,
and thus $G_0$ contains two fragments.

\begin{figure}
\center{\includegraphics{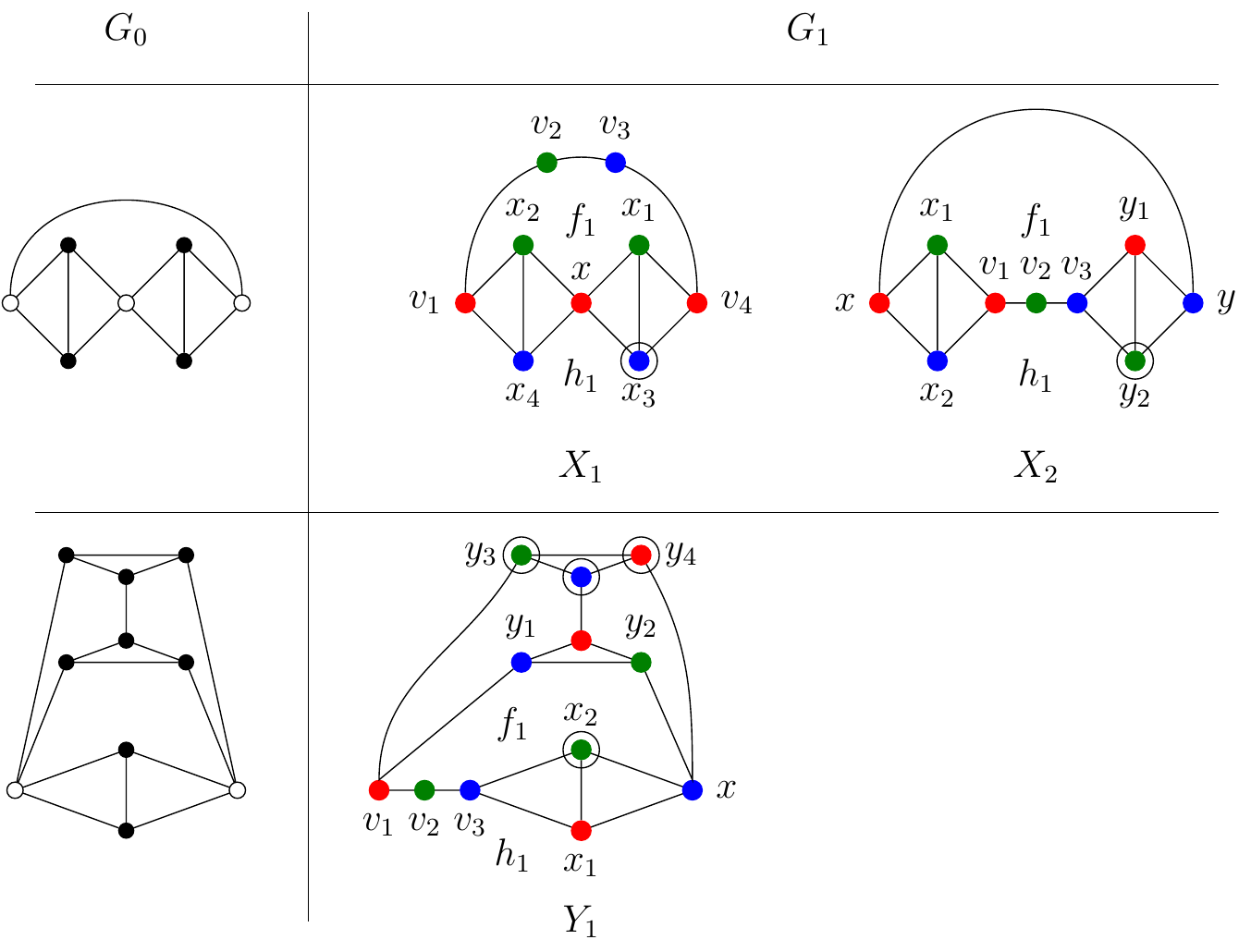}}
\caption{Stretchable \plfnof-graphs.}\label{fig-pk}
\end{figure}

Since stretching has at most two special faces and fragment
contains configurations forbidden by~\refclaim{cl-nodiam} and~\refclaim{cl-nohavel},
the stretching must occur at a vertex of each of the fragments. Hence, $G_0$ does not
have two vertex-disjoint fragments, and we conclude that $G$ is one of the graphs
$M$ and $K'_4$ depicted in Figures~\ref{fig-thomaswalls} and \ref{fig-tw1}.
The possible intermediate graphs $G_1$ are drawn in Figure~\ref{fig-pk}. Thus we have the three cases below.

Case 1: $G_1=X_1$. Let $\psi$ be a $3$-coloring  of $G_1$
such that $\psi(v_1)=\psi(x)=\psi(v_4)=1$, $\psi(v_2)=\psi(x_1)=\psi(x_2)=2$
and $\psi(v_3)=\psi(x_3)=\psi(x_4)=3$.  This coloring does not extend to
a $3$-coloring of $G$, and by symmetry, we can assume that it does not
extend to a $3$-coloring of $G_{h_0}$.  By Fact~\ref{f-5f},
it follows that $x$, $x_3$ and $v_4$ are incident with a common $5$-face in $G$,
and thus $x_3$ has degree  $3$ .  This contradicts~\refclaim{cl-nodiam}.

Case 2: $G_1=X_2$.  Let $\psi$ be a  $3$-coloring  of $G_1$
such that $\psi(v_1)=\psi(x)=\psi(y_1)=1$, $\psi(v_2)=\psi(x_1)=\psi(y_2)=2$
and $\psi(v_3)=\psi(x_2)=\psi(y)=3$.  By symmetry, $\psi$ does not extend to $G_{h_0}$,
and by Theorem~\ref{thm-7cyc}, $x_2$ is adjacent to $v_3$ and $xx_2v_3y_2y$ is a $5$-face.
However, then $y_2$ has degree  $3$  and we again obtain a contradiction with~\refclaim{cl-nodiam}.

Case 3: $G_1=Y_1$.  Let $\psi$ be a $3$-coloring  of $G_1$
such that $\psi(v_1)=\psi(x_1)=\psi(y_4)=1$, $\psi(v_2)=\psi(y_2)=\psi(x_2)=\psi(y_3)=2$ and
$\psi(v_3)=\psi(x)=\psi(y_1)=3$.
If $\psi$ does not extend to a $3$-coloring of $G_{f_0}$, then by
Fact~\ref{f-5f}, $G$ contains a $5$-face incident with $v_3$, $x_2$ and $x$; hence,
$x_2$ has degree $3$ and contradicts~\refclaim{cl-nodiam}.  Since $G$ is not $3$-colorable,
it follows that $\psi$ does not extend to a $3$-coloring of $G_{h_0}$.
By Fact~\ref{f-5f}, $G$ contains a $5$-face incident with $v_1$, $y_3$ and $x$, and
by Theorem~\ref{thm-gimbel}, $y_4$ is incident with the $5$-face as well.
However, then both $y_3$ and $y_4$ have degree  $3$  in $G$, which contradicts~\refclaim{cl-nohavel}.

This finishes the proof of Lemma~\ref{lemma-main}.
\end{proof}

\begin{proof}[Proof of Theorem~\ref{thm-main}]
By Lemma~\ref{lemma-pcr}, every graph obtained from a \plfnof-graph by replacing
non-adjacent $3$-vertices with critical patches is $4$-critical and has exactly four triangles.
Conversely, if $G$ is a \plf-graph, then it is an expanded \plfnof-graph by Lemma~\ref{lemma-main},
and since $G$ is $4$-critical, all the patches in $G$ are critical by Lemma~\ref{lemma-pcr}.
\end{proof}

\bibliographystyle{acm}
\bibliography{4c4t}

\end{document}